\newtheorem{thm}{Theorem}[section]
\newtheorem{proposition}[thm]{Proposition}
\newtheorem{lemma}[thm]{Lemma}
\newtheorem{remark}[thm]{Remark}
\newcommand{\eps}{\varepsilon}
\newcommand{\R}{\mathbb{R}}
\newcommand{\me}{\textrm{e}}
\newcommand\SJ[1]{{\color{black}{#1}}} % Sam
\newcommand\SJJ[1]{{\color{black}{#1}}}
\newcommand\SJJJ[1]{{\color{black}{#1}}}
\newcommand\KUK[1]{{\color{black}{#1}}}
\newcommand\rsp[1]{{\color{black}{#1}}}
\newcommand\rspp[1]{{\color{black}{#1}}}
\title{Switching, Multiple Time-Scales and Geometric Blow-Up in a Low-Dimensional Gene Regulatory Network}
\author{S.~Jelbart, K.~U.~Kristiansen \& P.~Szmolyan}
\date{\today}
\begin{document}
	\maketitle
	
	\begin{abstract}
		ODE-based models for gene regulatory networks (GRNs) can often be formulated as smooth singular perturbation problems with multiple small parameters, some of which are related to time-scale separation, whereas others are related to `switching', (proximity to a non-smooth singular limit). This motivates the study of reduced models obtained after (i) quasi-steady state reduction (QSSR), which utilises the time-scale separation, and (ii) piecewise-smooth approximations, which reduce the nonlinearity of the model by viewing highly nonlinear sigmoidal terms as singular perturbations of step functions. We investigate the \textit{interplay} between the reduction methods (i)-(ii), in the context of a 4-dimensional GRN which has been used as a low-dimensional representative of an important class of (generally high-dimensional) GRN models in the literature. We begin by identifying a region in the small parameter plane for which this problem can be formulated as a smooth singularly perturbed system on a blown-up space, uniformly in the switching parameter. This allows us to apply Fenichel's coordinate-free theorems and obtain a rigorous reduction to a 2-dimensional system, that is a perturbation of the QSSR. Finally, we show that the reduced system features a Hopf bifurcation which does not appear in the QSSR system, due to the influence of higher order terms. Taken together, our findings suggest that the relative size of the small parameters is important for the validity of QSS reductions and the determination of qualitative dynamics in GRN models more generally. Although the focus is on the 4-dimensional GRN, our approach is applicable to higher dimensions.
	\end{abstract}
	
	\medskip
	
	\noindent {\small \textbf{Keywords:} gene regulatory networks, model reduction, geometric singular perturbation theory, geometric blow-up, switching}
	
	\noindent {\small \textbf{MSC2020:} 34E13, 34E15, 34E10, 94C11, 37N25}

\section{Introduction}
\label{sec:introduction}

Mathematical models of biological and biochemical phenomena often feature \textit{switching}, i.e.,~abrupt dynamical transitions which occur when a particular variable or set of variables in the model cross over a threshold. % values of the relevant variables (typically concentrations) are attained or exceeded. 
%This is sometimes referred to as \textit{switching}. 
In the context of ODE-based models, switching can often be described by smooth singular perturbation problems which limit to \textit{piecewise-smooth (PWS)} systems as one or more of the perturbation parameters tend to zero. Problems of this kind have received a lot of attention in recent years; here we refer to \cite{Jelbart2022b,Kosiuk2016,Kristiansen2019d,Miao2020,Jelbart2024,Mahdi2024} for recent work on low-dimensional problems featuring switching in ODE models of intracellular calcium oscillation, substrate-depletion oscillation, mitotic oscillation in a model for the embryonic cell cycle, a cAMP signalling model for cell aggregation and the Jansen-Rit model in mathematical neuroscience.

In addition to switching, many biological and biochemical processes -- and consequently also the mathematical models thereof -- evolve on multiple time-scales. This is also true of the particular applications mentioned above, and one of the main contributions of the references \cite{Jelbart2022b,Kosiuk2016,Kristiansen2019d,Miao2020,Jelbart2024} has been to demonstrate that switching and multiple time-scale aspects of the models can be analysed with a common set of mathematical techniques based on \textit{geometric singular perturbation theory (GSPT)} \cite{Fenichel1979,Jones1995,Kuehn2015,Wechselberger2019,Wiggins2013} and a method of desingularisation known as \textit{geometric blow-up} \cite{Dumortier1996,Kristiansen2015b,Krupa2001a,Krupa2001b,Szmolyan2001}. Singular perturbation analyses of problems involving both switching and multiple time-scales tend to be complicated by the presence of multiple small parameters, some of which relate to switching, whereas others are related to the multi-scale structure of the model. One approach to the analysis of these problems is to group the small parameters together by relating them to a single small parameter, as in e.g.~\cite{Jelbart2022b,Kruff2021,Jelbart2024}. This serves to simplify the analysis, however, approaches of this kind tend to rely on the availability of good numerical estimates for the parameter values, and one cannot in general expect the same dynamics if the singular limit is approached along a different curve in parameter space. Detailed mathematical analyses of problems with multiple small parameters that describe the asymptotic behaviour and singular limit in the corresponding small parameter space, where the small parameters are kept independent, are much more involved. Perhaps unsurprisingly, analyses of this kind appear to be less common in the literature, however we refer to \cite{Baumgartner2024,deMaesschalck2011,Kristiansen2024,Kuehn2022b} for notable exceptions. %\fbox{We need something on QSSR here! References to justification (e.g. Walcher etc) and to use} % \note{which ref's to put here?} and the recent analysis of a biological model with two small parameters in \cite{Baumgartner2024}.
%Detailed analyses in which the small parameters are kept independent, as in \cite{Baumgartner2024}, appear to be lacking. %This is perhaps not surprising, given that analyses of this kind are necessarily more involved. However,  given the prevalence of models which feature both switching and multi-scale dynamics, and the fact that detailed analyses of this kind are necessary in order to answer important questions in applications such as ``Can the limits be taken successively?"

\subsection{Multiple time-scales and switching in GRNs}

%\note{They are examples for which combined model reduction has been handled in a somewhat rigorous way, but I think they only deal with particular scalings instead of describing the two-parameter plane. They're also low-dimensional.} 
This article is motivated by the interplay between multiple time-scale dynamics and switching in an important class of \textit{gene regulatory network (GRN)} models \SJ{which can be traced back to the pioneering work in \cite{Glass1973,Thomas1973}}. % There are many approaches to the mathematical modelling of GRN dynamics, including \note{what?...} 
A significant body of research investigating the dynamics and properties of these systems has emerged in recent decades, see e.g.~\cite{Edwards2015,Machina2013a,Machina2013b,Polynikis2009,Quee2021} and the many references therein. The models in question can be written as $2N$-dimensional ODE systems of the form
\begin{equation}
	\label{eq:grn_network}
	\begin{split}
		r_i' &= F_{{i}}(h_{\SJJJ{i,1}}(p_1; \theta_1, n_1), h_{\SJJJ{i,2}}(p_2; \theta_2, n_2), \ldots, h_{\SJJJ{i,N}}(p_N; \theta_N, n_N)) - \gamma_i r_i , \\
		p_i' &= \eps \left( \kappa_i r_i - \delta_i p_i \right) ,
	\end{split}
\end{equation}
where $(\cdot)'=\frac{d}{dt}$, $i = 1, \ldots, N$, $r_i \geq 0$ denotes the concentration of transcribed mRNA associated with a particular gene $i$, $p_i \geq 0$ denotes the concentration of the corresponding translated protein, $\gamma_i, \eps, \kappa_i, \delta_i, \theta_i, n_i$ are strictly positive parameters, each $h_{\SJJJ{i,j}}$ is a Hill function of the form $h_{\SJJJ{i,j}}= h^+$ or $h_{\SJJJ{i,j}} = h^-$, where
\begin{align}\label{eq:hill0}
	h^-(\KUK{p}; \KUK{\theta}, \KUK{n}) = 1 - h^+(\KUK{p}; \KUK{\theta}, \KUK{n}) = 
	\frac{\KUK{\theta}^{\KUK{n}}}{\KUK{p}^{\KUK{n}} + \KUK{\theta}^{\KUK{n}}} ,
\end{align}
and {each} function $F_{{i}}$ is a polynomial in its arguments. We refer to e.g.~\cite{Edwards2015,Polynikis2009} for details, and to Figure \ref{fig:Hill_functions} for a graphical representation of the Hill functions $h^\pm$.

\begin{figure}[t!]
	\centering
	\includegraphics[scale=0.4]{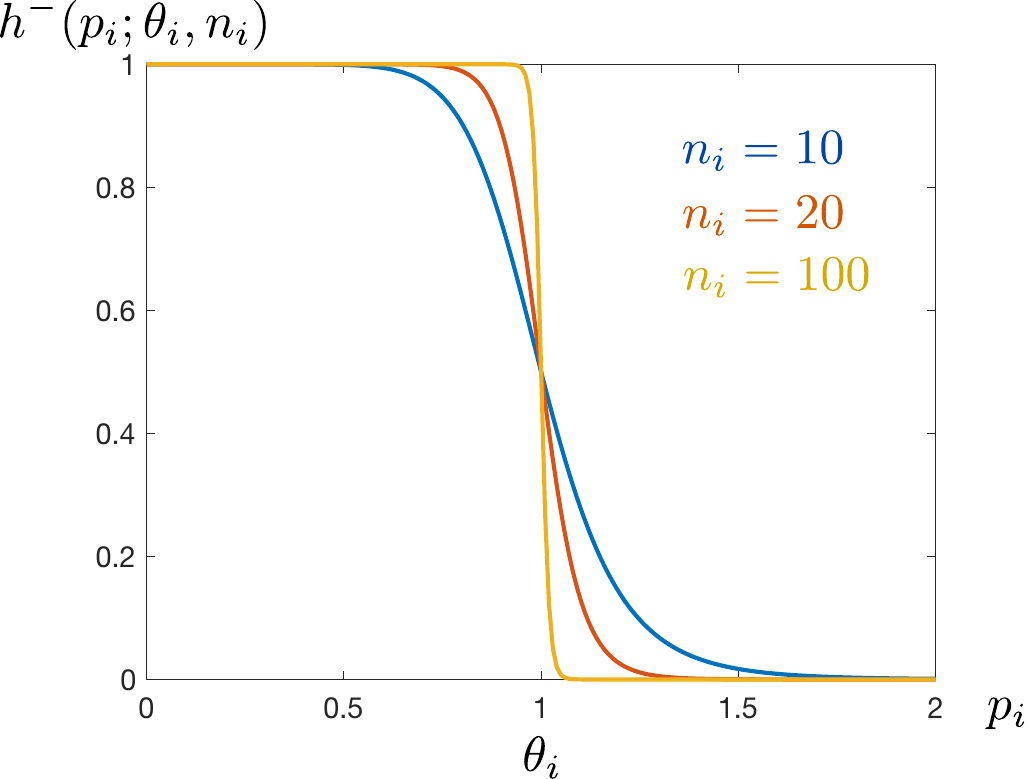}
	\quad
	\includegraphics[scale=0.4]{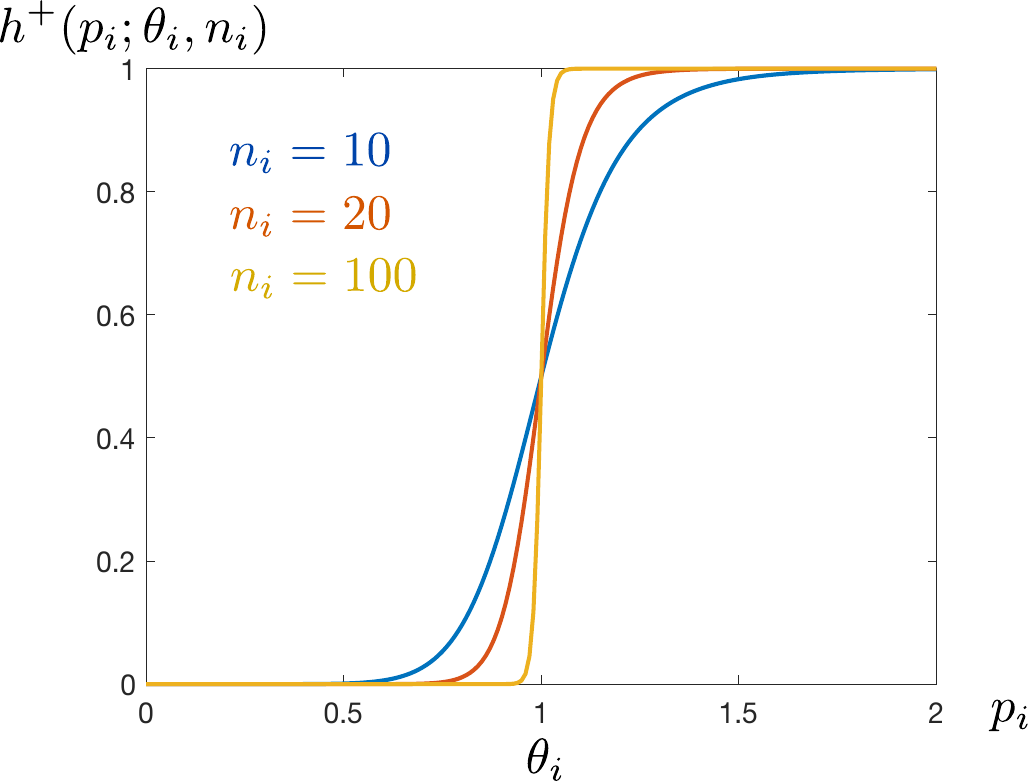}
	\caption{The Hill functions $h^-(p_i; \theta_i, n_i)$ and $h^+(p_i; \theta_i, n_i)$ are shown in in the left and right panels respectively, for $\theta_i = 1$ and three different values of $n_i$. Both functions converge do a discontinuous step function when $n_i \to \infty$, as in \eqref{eq:Hill_fns}.}
	\label{fig:Hill_functions}
\end{figure}

\begin{figure}[t!]
	\centering
	\includegraphics[scale=0.55,trim= 50 180 5 180]{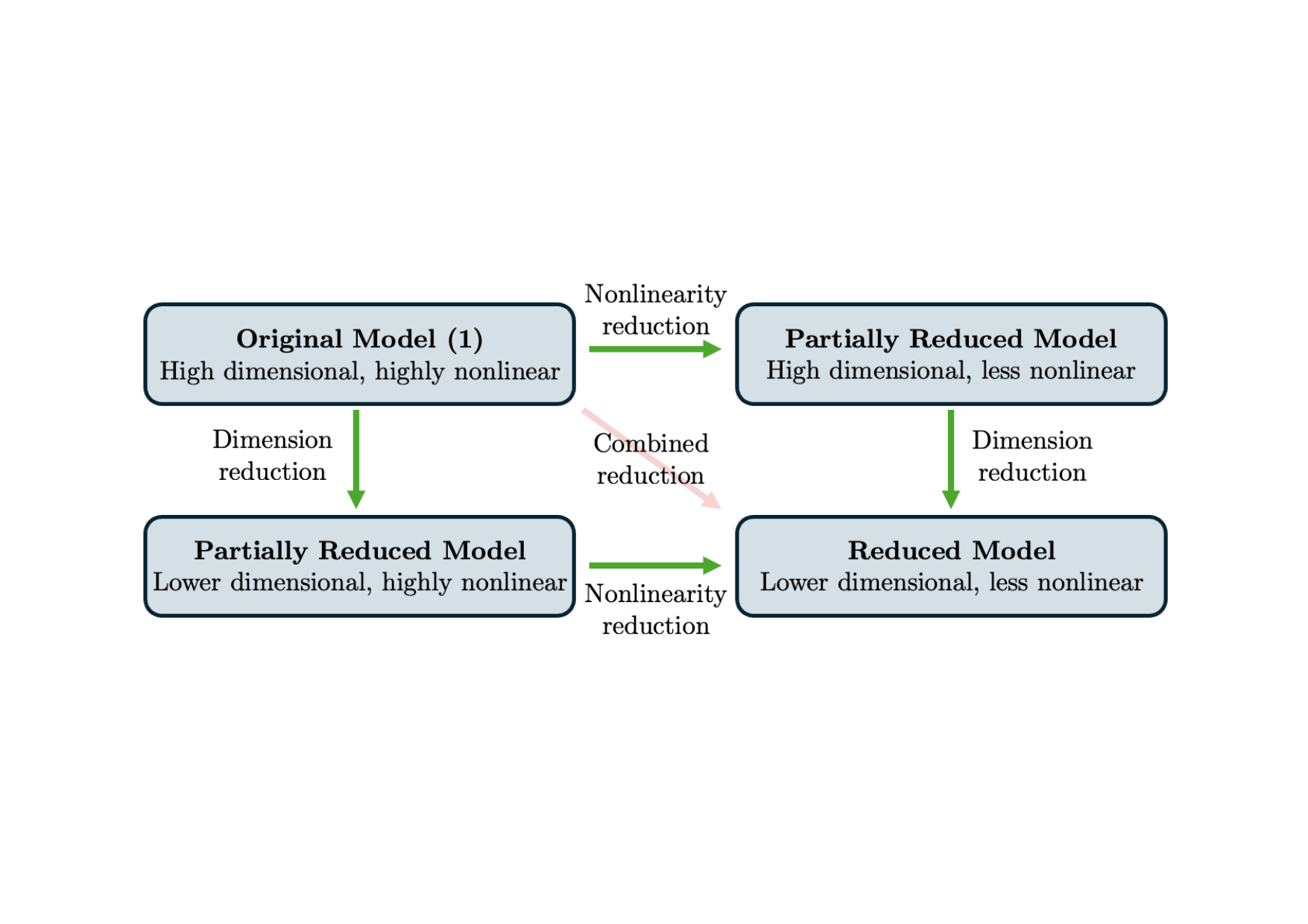}
	\caption{\rsp{Conceptual schema for the combined approach to model reduction in the (generally high dimensional and highly nonlinear) GRN model \eqref{eq:grn_network}. Applying partial reductions in different orders (or equivalently, taking a different path from the original model \eqref{eq:grn_network} to the final reduced model) can lead to different reduced models, and therefore to different predictions.}}
	\label{fig:model_reduction}
\end{figure}

\rsp{Direct analysis or simulation of the full network dynamics using system \eqref{eq:grn_network} is difficult because it is both (i) high dimensional and (ii) highly nonlinear. In order to deal with these obstacles in practice, mathematical modellers study simpler \textit{reduced models} which are lower dimensional, less nonlinear, or both. The conceptual schema is illustrated in Figure \ref{fig:model_reduction}. Notice that the high dimensionality and high nonlinearity of the original model \eqref{eq:grn_network} are dealt with via separate reductions, referred to as \textit{partial reductions} in the figure and its caption. Depending on the context, it may be sufficient to work with a partially reduced model which is `only' lower-dimensional or less nonlinear, but in many cases the overall complexity of the problem requires a lower-dimensional, less nonlinear reduced model is only obtained after applying both of these reductions together; we refer to \cite{Polynikis2009} for a review and investigation into the validity of the reductions that are often applied in practice. This paper will focus on the utility and validity of \textit{combined reductions} of this kind.} %In particular, we will provide a mathematical explanation for a number of important observations on the validity and invalidity of model reductions in this context which were reported in \cite{Polynikis2009}.}

\rsp{In mathematical terms, the partial and independent dimensional/nonlinearity reductions described above are related to the presence of independent small parameters in system \eqref{eq:grn_network}. The main assumptions are:} %. The small parameters associated with (i) and (ii) are outlined in the following.} % assumptions are commonly made in order to simplify the full network 
%\eqref{eq:grn_network} and justify the partial reductions in Figure \ref{fig:model_reduction}:}
\begin{itemize}
\item[(i)] \textbf{QSSR}: $0 < \eps \ll 1$. This amounts to the assumption that mRNA transcription is very fast compared with the protein dynamics. Applying the QSSR\SJJJ{, i.e.~setting $r_i' = 0$ for each $i = 1, \ldots, N$,} reduces the $2N$-dimensional network \eqref{eq:grn_network} to an $N$-dimensional \textit{protein-only network}
\[
\dot p_i = \frac{\kappa_i}{\gamma_i} F_{{i}}(h_\SJJJ{i,1}(p_1; \theta_1, n_1), h_\SJJJ{i,2}(p_2; \theta_2, n_2), \ldots, h_\SJJJ{i,N}(p_N; \theta_N, n_N)) - \delta_i p_i , \ \ 
i = 1, \ldots, N ,
\]
with
\[r_i = \gamma_i^{-1} F_{{i}}(h_\SJJJ{i,1}(p_1; \theta_1, n_1), h_\SJJJ{i,2}(p_2; \theta_2, n_2), \ldots, h_\SJJJ{i,N}(p_N; \theta_N, n_N)),\quad i=1,\ldots,N.\]
Here the overdot denotes differentiation with respect to slow time $\tau=\eps t$. 
\item[(ii)] \textbf{\SJJJ{Steep switching}}: the Hill exponents satisfy $n_i \gg 1$, motivating a PWS approximation in which the Hill functions $h^\pm$ \SJJJ{(which are very steep when $n_i \gg 1$)} are replaced by piecewise-constant functions in accordance with
\begin{equation}
\label{eq:Hill_fns}
\lim_{n_i \to \infty} h^+(p_i; \theta_i, n_i) = 
\begin{cases}
0 & p_i < \theta_i , \\
1 & p_i > \theta_i ,
\end{cases}
\qquad
\lim_{n_i \to \infty} h^-(p_i; \theta_i, n_i) = 
\begin{cases}
1 & p_i < \theta_i , \\
0 & p_i > \theta_i .
\end{cases}
\end{equation}
This (pointwise) convergence is sketched for increasing values of $n_i$ in Figure \ref{fig:Hill_functions}.
\end{itemize}
\rsp{The assumptions} (i) and (ii) simplify the analysis and/or simulation of the original network \eqref{eq:grn_network}, although in very different ways. \rsp{On one hand, the QSSR in (i) leads to a dimensional reduction which} takes advantage of the multi-scale (specifically slow-fast) structure of system \eqref{eq:grn_network} when $0 < \eps \ll 1$, in order to reduce the the dimension from $2N$ to $N$. On the other hand, (ii) \rsp{leads to a nonlinearity reduction which} reduces system \eqref{eq:grn_network} to a \textit{piecewise-linear (PWL)} system. Although the resulting equations in this case are non-smooth, they are \textit{linear} away from $p_i = \theta_i$.

The assumption in (i) is very common in the literature, and often assumed implicitly, however we refer to \cite{Edwards2015,Gedeon2012,Polynikis2009} for studies which involve the mRNA variables. (It is \SJ{also} important to note that this assumption may not always be justified; see \cite{Garcia2004,Gedeon2012} (p.~10,958 second paragraph in the former) for experimental values in actual GRN's, which demonstrate that the time-scale separation between the $r_i$ and $p_i$ variables varies for different applications.) The authors in \cite{Edwards2015} in particular proved a number of reduction theorems in which the QSSR can be made rigorous via a slow manifold reduction based on Fenichel theory. These results are valid for \rsp{fixed $n_i > 0$ as $\eps \to 0$, i.e.~they hold for the partially reduced model shown in the bottom left corner of Figure \ref{fig:model_reduction}}. In contrast, with this paper we begin a research program where we are interested in the \SJJ{combined} limit $n_i\to \infty$, $\eps\to 0$ and on the \textit{interplay} between the reduction assumptions in (i) and (ii) above. More broadly, we are interested in whether -- and if so how -- the qualitative dynamics depends on the relative sizes of the small parameters associated with switching and multi-scale structure. 

Numerous approaches to the non-smooth limit in (ii) appear in the literature: see \cite{Casey2006,Gouze2002} for an analyses based on Filippov theory \cite{Filippov1960}, and \cite{Ironi2011,Plahte2005} for foundational work on approaches based on singular perturbation methods. %According to \cite{Edwards2015}, 
Both approaches have their pros and cons. On one hand, approaches based on Filippov theory are `consistent' on the PWS level, however, the dynamics prescribed at discontinuity sets may not accurately represent the dynamics of the original network \eqref{eq:main0}, which is smooth. On the other hand, approaches based on singular perturbation approach can sometimes be `restrictive' \cite{Edwards2015}, simply due to the complicated nature of the relationship between the original network and the non-smooth limit. We refer to \cite{Machina2013a,Machina2013b,Machina2011} for more on the relationship between these two approaches, and additional work from a singular perturbations standpoint. For the most part, the analyses presented in these works describe the limiting dynamics in reduced protein-only systems. %Here it is (often implicitly) assumed that the QSSR is valid. 
In other words, the QSSR is applied first (or it is implicitly assumed and the analysis starts with a protein-only system), after which an asymptotic analysis as the Hill coefficients $n_i \to \infty$ is undertaken. \rsp{In addition to the references above, we} refer to \cite{Gameiro2024} and the many references therein for details on an \SJJJ{interesting} approach to the study of global dynamics of switching systems based on combinatorial, topological and computational methods, which has direct significance for the study of GRNs.

\rsp{In terms of Figure \ref{fig:model_reduction}, the aforementioned results for the nonlinearity reduction in (ii) apply to the reduction step which is oriented from the partially reduced problem on the bottom left to the (fully) reduced problem on the bottom right (since they assume the validity of the QSSR as a preliminary step). It is, however, worthy to emphasise that the final reduced model obtained via this procedure will \textit{not}, in general, be the same as the reduced model that is obtained by taking the `other path' which proceeds via the top right partially reduced model in Figure \ref{fig:model_reduction}. This follows from a subtle but important fact about problems with multiple small parameters, namely, that the limiting dynamics depends upon the way in which the simultaneous singular limit is taken. In the context of the GRN model \eqref{eq:grn_network}, this leads to non-unique reduced models and therefore to an important problem in practice: how to determine the \textit{correct} reduced model.}

\subsection{\rspp{A representative} model}
\label{sub:model}

% 	\note{Is this still needed? '\eqref{eq:main0} is sometimes viewed as a `low-dimensional representative' of an important class of (generally high-dimensional) ODE models which can be traced back to the pioneering work in \cite{Glass1973,Thomas1973}``}
%In order to substantiate this claim,
%\rsp{In this work, we are interested in the \textit{interplay} between the reduction assumptions in (i) and (ii) above, i.e.~we are interested in whether -- and if so how -- the qualitative dynamics depends on the relative sizes of the small parameters associated with switching and multi-scale structure. The independent partial (dimensional and nonlinearity) reductions in Figure \ref{fig:model_reduction} correspond to the independent singular limits $\eps \to 0$ and $n_i^{-1} \to 0$ respectively, and the combined reduction corresponds the multiple small parameter limit when $\eps, n_i^{-1} \to 0$. The `correct' reduced model, in general, is non-unique because it depends upon the way in which the combined limit $\eps, n_i^{-1} \to 0$ is taken. This means that modellers need to to have have sufficient information about the dynamics in a neighbourhood of the origin in the small parameter space in order to know what the relevant reduced model and dynamics are.

\rsp{Our aim in what follows is to investigate the questions outlined above in detail for a simple, $4$-dimensional activator-inhibitor model in general form \eqref{eq:grn_network}, which has previously been used as a `representative' model in model reduction investigations in \cite{Polynikis2009} (we also refer to \cite{Widder2007} for an earlier analysis of the model). The equations can be formulated as follows:}
%} In this paper, we consider the following $4$-dimensional activator-inhibitor model:
\begin{equation}
\label{eq:main0}
\begin{aligned}
r_a' &= h^+(p_b;1,\sigma^{-1}) -   r_a , \\
r_b' &= h^-(p_a;1,\sigma^{-1}) - \gamma  r_b , \\
p_a' &=  \eps \left( \xi_a  r_a - p_a \right) , \\
p_b' &=  \eps \delta \left( \xi_b  r_b -   p_b \right) .
\end{aligned}
\end{equation}
\SJ{System} \eqref{eq:main0} is in the general form \eqref{eq:grn_network} 
with two Hill functions appearing on the right-hand side with $\theta_a=\theta_b=1$ and
\begin{align}\label{eq:sigman1n2}
n_a=n_b= \sigma^{-1},
\end{align}
so that $\sigma \to 0$ corresponds to the switching limit $n_a = n_b \to \infty$.
We consider $\eps, \sigma>0$ as independent small parameters, and $\delta,\gamma,\xi_i,\,i\in \{a,b\}$ all positive and fixed in some compact set; in particular, we will treat $\xi_a$ and $\xi_b$ as bifurcation parameters (fixed in some compact set). 

\begin{figure}[t!]
\centering
\subfigure[]{\includegraphics[scale=0.45]{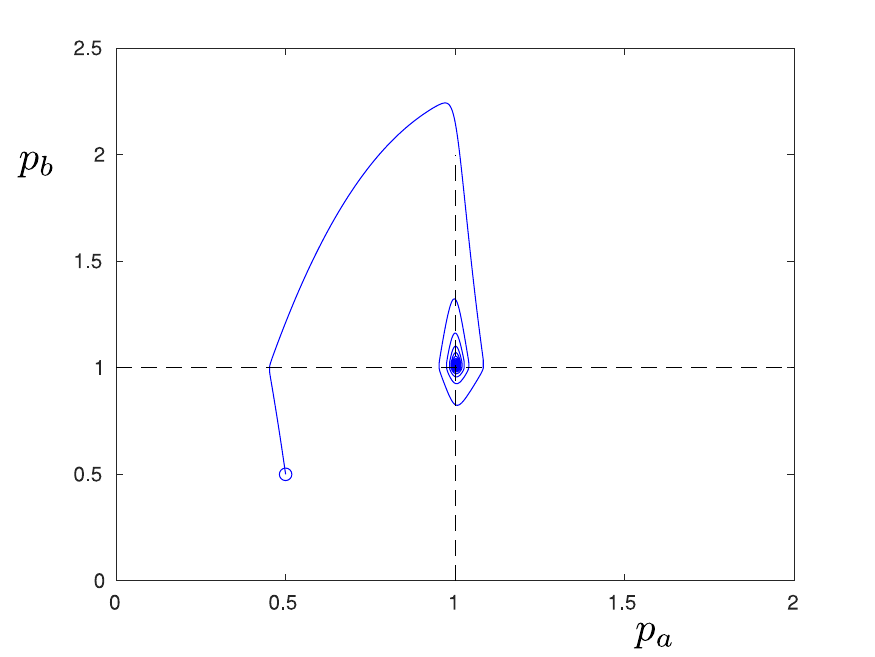} }  
\subfigure[]{\includegraphics[scale=0.45]{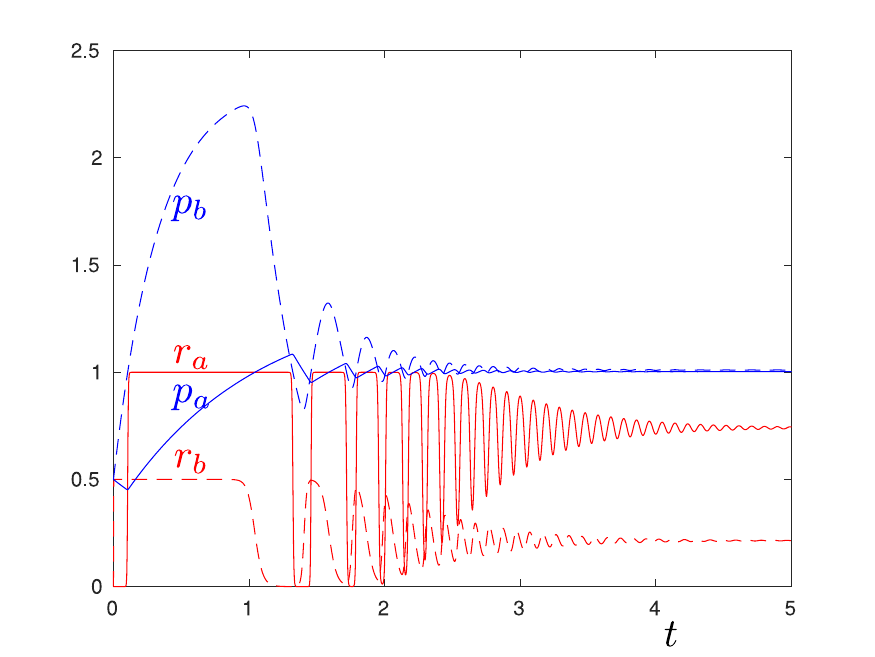}  } 
\subfigure[]{\includegraphics[scale=0.45]{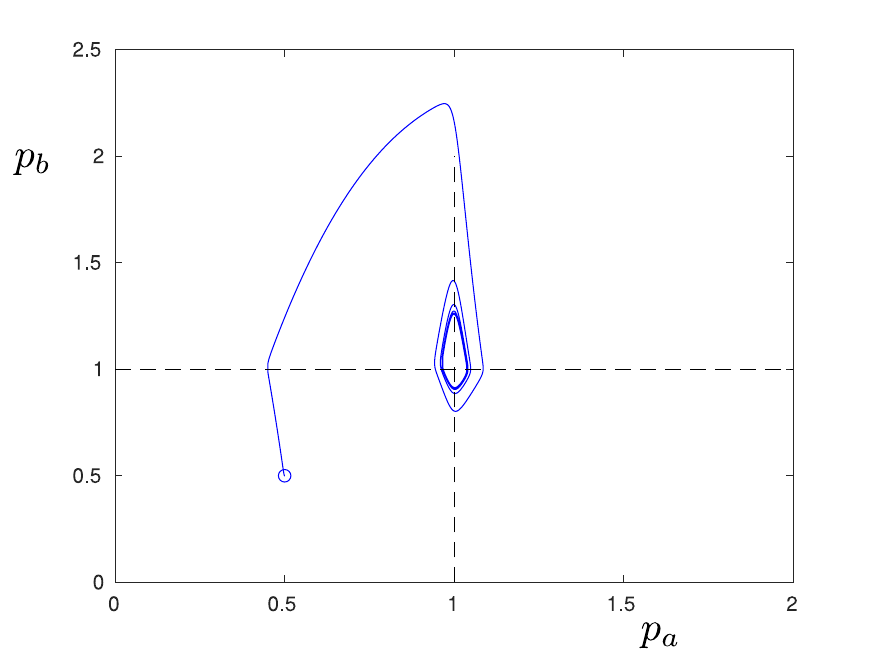} } 
\subfigure[]{\includegraphics[scale=0.45]{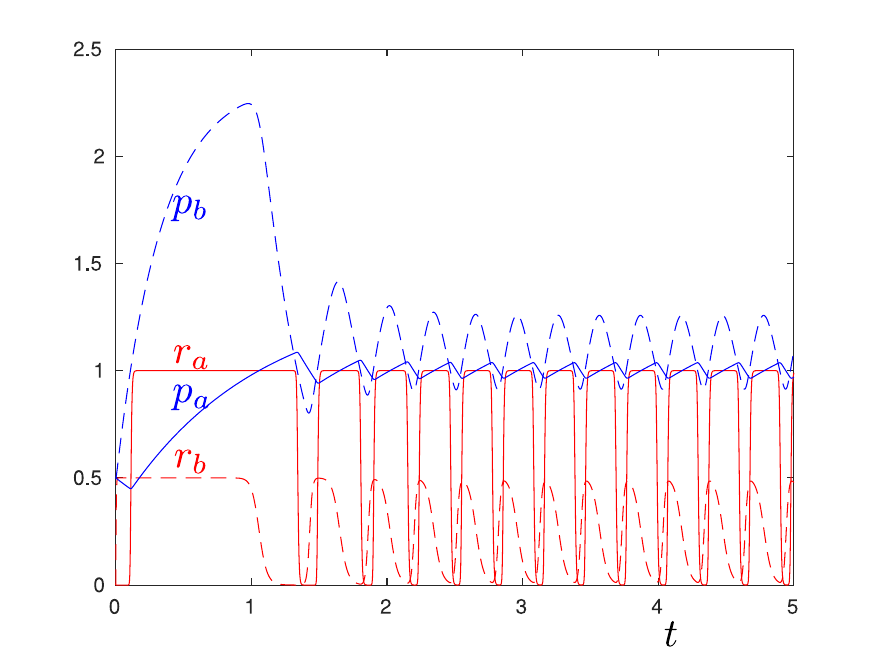} } 
\caption{\rsp{Two different particular solutions of system \eqref{eq:main0}. In (a) and (c): Projections onto the $(p_a,p_b)$-plane. In (b) and (d): Time series of solutions. The parameter values are given by \eqref{eq:para} and $\eps=5\times 10^{-5}$ in (a) and (b) and $\eps=5\times 10^{-3}$ in (c) and (d). The presence of oscillations appears to depend upon $\eps$.}}
\label{fig:lcnolc}
\end{figure}

The notation and formulation in system \eqref{eq:main0} agrees with \cite[eqn.~(41)]{Polynikis2009}, except for the fact that we have assumed \eqref{eq:sigman1n2} and introduced certain scalings to eliminate extra parameters; we refer to Appendix \ref{app:model} below for details on this derivation. The reader is also referred to \cite{Widder2007} \SJ{(where to the best of our knowledge, the system first appeared)} for \SJ{an earlier analysis} \SJJ{and} biochemical interpretation of \SJJ{system} \eqref{eq:main0}. \SJJ{We also} refer to %\cite{Widder2007} for an earlier analysis of \eqref{eq:main0}, and to 
\cite{delVecchio2007,Edelstein1988,Guantes2006} for analyses of similar systems that involve additional terms which model self-regulation effects.'

\rsp{An interesting aspect of \eqref{eq:main0} is that the existence of limit cycles appears to depend on $\eps$. We illustrate this in Figure \ref{fig:lcnolc}. Here we shown the result of numerical computations of solutions of \eqref{eq:main0} for the parameter values:
\begin{align}
\gamma=2,\ \,\delta = 3,\ \, \xi_a = 1.3536,\ \,\xi_b = 2.3536,\ \,\sigma=10^{-2},\label{eq:para}
\end{align}
and two different values of $\eps$. In (a) and (b), we have $\eps=5 \times 10^{-5}$, whereas in (c) and (d), we have used $\eps=5 \times 10^{-3}$. The panels (a) and (c) show projections to the $(p_a,p_b)$-plane whereas (c) and (d) show the time series of $(r_a,r_b,p_a,p_b)$. All figures use the same initial condition $(r_a,r_b,p_a,p_b)(0)=(0,0,0.5,0.5)$ and the solution is computed using Matlab's ODE15s with low tolerances ($10^{-9}$). We see that in (a) and (b), the solution converges to a steady state, whereas in (c) and (d) the solution converges to a periodic orbit.  }

%\note{more here? should we comment on the observed slow-fast and nonsmooth phenomena that can be observed?}

\rsp{Indeed, earlier work -- see \cite{Tyson1978,Edelstein1988} for reviews and \cite{Polynikis2009} for a result on system \eqref{eq:main0} in particular -- shows mathematically that QSSR `destroys' the oscillations in the limit $\eps \to 0$ (assuming $\sigma > 0$ is fixed). In the case of system \eqref{eq:main0},} % which corresponds to the limit $\eps \to 0$.} % the oscillations can be explained by considering the QSSR
applying a QSSR to system \eqref{eq:main0} %\SJJJ{, based on $0 < \eps \ll 1$,} 
leads to \SJJJ{the} 2-dimensional protein-only system
\begin{equation}\label{eq:M0red}
\begin{aligned}
\dot p_a &= \xi_a h^+(p_b;1,\sigma^{-1}) -p_a,\\
\dot p_b &= \delta (\xi_b \gamma^{-1} h^-(p_a;1,\sigma^{-1})-p_b),
\end{aligned}
\end{equation}
with $r_a$ and $r_b$ slaved according to
\begin{equation}\label{eq:M0}
% \begin{aligned}
r_a = h^+(p_b;1,\sigma^{-1}),\qquad
r_b = \gamma^{-1} h^-(p_a;1,\sigma^{-1}).
% \end{aligned}
\end{equation}
Since the divergence of \SJJ{the vector field associated with system} \eqref{eq:M0red} is $-1-\delta<0$ for all $\sigma>0$, we conclude the following:
\begin{lemma}\label{lemma:qssr}
The QSSR \SJJ{system} \eqref{eq:M0red} does not support limit cycles. 
\end{lemma}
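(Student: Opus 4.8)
The plan is to invoke the classical Bendixson (or Bendixson--Dulac) criterion, which asserts that a $C^1$ planar vector field whose divergence is of one strict sign on a simply connected region admits no periodic orbit contained in that region. Since the divergence computation has already been recorded in the text immediately preceding the statement, essentially all of the work is done, and the proof amounts to checking that the hypotheses of the criterion are met.

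First I would fix $\sigma>0$ and note that $n_a=n_b=\sigma^{-1}$ is then finite, so the Hill functions $h^\pm(\,\cdot\,;1,\sigma^{-1})$ are smooth and the right-hand side of \eqref{eq:M0red} defines a smooth (in particular $C^1$) vector field on the biologically relevant phase space $\{(p_a,p_b):p_a,p_b\geq 0\}$, which is simply connected. Writing $(\dot p_a,\dot p_b)=(f,g)$ with
\[
f(p_a,p_b)=\xi_a h^+(p_b;1,\sigma^{-1})-p_a,\qquad g(p_a,p_b)=\delta\bigl(\xi_b\gamma^{-1}h^-(p_a;1,\sigma^{-1})-p_b\bigr),
\]
the structural point is that each Hill nonlinearity depends on the \emph{other} protein variable, so $f$ depends on $p_a$ and $g$ on $p_b$ only through the respective linear decay terms. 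Hence the diagonal Jacobian entries are constant and
\[
\frac{\partial f}{\partial p_a}+\frac{\partial g}{\partial p_b}=-1-\delta,
\]
which is strictly negative for every $\sigma>0$ (and in fact independent of $\sigma$). With the divergence of fixed sign on a simply connected domain, Bendixson's criterion (equivalently, the Dulac function $\equiv 1$) rules out any closed orbit, and in particular any limit cycle.

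I do not anticipate any genuine obstacle, as the conclusion is a direct consequence of the special structure of \eqref{eq:M0red}. The only point deserving a moment's care is the choice of a simply connected domain on which the field is $C^1$: to make the argument airtight on the closed quadrant I would additionally observe that the quadrant is forward invariant, since $\dot p_a=\xi_a h^+(p_b;1,\sigma^{-1})\geq 0$ on $\{p_a=0\}$ and $\dot p_b=\delta\xi_b\gamma^{-1}h^-(p_a;1,\sigma^{-1})\geq 0$ on $\{p_b=0\}$, so the flow points inward along the boundary and any periodic orbit must lie in the open quadrant where the criterion applies.
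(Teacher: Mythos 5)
Your argument is correct and is essentially the paper's own: the divergence of the vector field in \eqref{eq:M0red} is identically $-1-\delta<0$ (because each Hill nonlinearity depends only on the \emph{other} protein variable), and Bendixson's criterion then excludes closed orbits. The extra care you take with forward invariance of the quadrant is a sensible, if not strictly necessary, refinement of the same argument.
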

\rsp{%This result also appears in \cite{Polynikis2009}; see also \cite{Tyson1978,Edelstein1988} for earlier related results. 
Lemma \ref{lemma:qssr} is significant because it means that the QSSR fails to provide an accurate representation of the original system dynamics. In particular, the reduced problem \eqref{eq:M0red} cannot be used to explain the limit cycles observed in Figure \ref{fig:lcnolc} (c) and (d). It would seem, then, that the reduction to a protein-only system fails. However, the preceding analysis \textit{assumed implicitly that $\sigma > 0$ is kept fixed}. As we will see below, system \eqref{eq:main0} \textit{can} be reduced to a protein-only system which can oscillate for arbitrarily small values of $\eps$, as long as the ratio $\eps / \sigma$ is small. In order to show this, we will need to consider the QSSR more carefully.}

\SJJJ{The reduction of system \eqref{eq:main0} to the reduced protein-only system \eqref{eq:M0red} can be justified using Fenichel theory \cite{Fenichel1979,Jones1995}, since} \eqref{eq:M0} defines a critical manifold $\mathcal M(\sigma)$ of \eqref{eq:main0} for $\eps=0$ and any $\sigma>0$. It is attracting and normally hyperbolic, since the linearization of \eqref{eq:M0} gives $-1$ and $-\gamma^{-1}$ as the nonzero eigenvalues. Consequently, Fenichel theory provides the existence of a slow manifold for all $0<\eps\ll  1$\SJJJ{, and the dynamics on the slow manifold is a smooth perturbation of the QSSR system \eqref{eq:M0red}}. \rsp{Indeed, an analogous argument has been used to mathematically justify the QSSR in the general network \eqref{eq:grn_network} as long as each $n_i > 0$ is fixed \cite{Edwards2015}.} \SJJ{However}, this result is not uniform with respect to $\sigma \to 0$ \rsp{(or $n_i \to \infty$ in the context of \eqref{eq:grn_network}). For system \eqref{eq:main0}, this is} due to loss of smoothness along the codimension-1 \textit{switching manifold} 
\begin{align}\label{eq:Sigma}
\Sigma := \Sigma_a \cup \Sigma_b,
\end{align}
where
\begin{align}\label{eq:Sigmaab}
\Sigma_a = \left\{ (r_a, r_b, 1, p_b) : r_a, r_b, p_b \geq 0 \right\} , \qquad 
\Sigma_b = \left\{ (r_a, r_b, p_a, 1) : r_a, r_b, p_a \geq 0 \right\},
\end{align}
recall \eqref{eq:Hill_fns}. \SJJJ{This indicates that the double limit as both $\eps \to 0$ and $\sigma \to 0$ needs a \rspp{more} careful analysis.}

\subsection{Main results}
\SJJ{Our main results \SJJJ{establish the existence of} a region in the $(\sigma, \eps)$ small parameter plane for which the % In our main results, we identify an important parameter regime %(defined by \SJ{equation \eqref{eq:mu_scaling} below} with $0<\mu\ll 1$, see also Figure \ref{fig:par_planes}) 
%in a small neighbourhood of $(\sigma,\eps)=(0,0)$, \SJ{where the 
reduction to a 2-dimensional protein-only system is possible. They also describe certain aspects of the dynamics of this reduced problem, which should be compared with the QSSR system \eqref{eq:M0red}}. % In further details, we have the following:
\begin{thm}\label{thm:thm0}
Consider \SJJ{system} \eqref{eq:main0} with
\begin{equation}
\label{eq:mu_scaling}
\eps = \mu \sigma,
\end{equation} and fix any $k\in \mathbb N$, $\sigma_0>0$ sufficiently small and any \SJJ{connected} compact domain $K\subset [0,\infty)^2$. Then the following \SJJJ{assertions are true:} % for all $\mu>0$ and $\sigma>0$ small enough:
\begin{enumerate}
\item \label{existence}
\rsp{Reduction to a protein-only system is justified when $0 < \mu \ll 1$. More precisely,} \SJJJ{there exists a $\mu_0 > 0$ small enough such that t}here exists a \SJJJ{2-dimensional} attracting \SJ{and locally invariant} manifold $\mathcal M_{\mu}(\sigma)$ for all $\mu \in (0,\mu_0)$, $\sigma\in (0,\sigma_0)$, 
with $\mu_0$ independent of $\sigma$, which is
\begin{enumerate}
\item a $C^k$-smooth graph
over $(p_a,p_b)\in K$, depending smoothly upon $\sigma\in (0,\sigma_0)$ and $\mu \in[0,\mu_0)$;
\item $C^0$ $\mathcal O(\mu)$-close to the critical manifold $\mathcal M_0(\sigma)\subset \mathcal M(\sigma)$ defined by the QSSR approximation in \eqref{eq:M0}, uniformly with respect to $\sigma$. 
\end{enumerate}
\item \label{kappa}
\rsp{Hopf bifurcation is only possible in a particular region of the $(\eps, \sigma)$-plane.} \SJ{Let}
\begin{align}\label{eq:alpha}
\alpha:=\frac{\gamma (1+\delta)}{\delta(1+\gamma)}.
\end{align}Then 
the ray defined by
\begin{align*}
\mu = \alpha \sigma,\qquad \sigma>0,
\end{align*}
divides the $(\sigma,\mu)$-parameter plane into two separate regions, where Hopf bifurcations on $\mathcal M_{\mu}(\sigma)$ (as $(\sigma,\mu)\rightarrow \SJJ{(0,0)}$) only occur (for some values of our bifurcation parameters $(\xi_a,\xi_b)$) within $\{(\sigma,\mu)\,:\,\mu>\alpha \sigma\}$.
\item \label{nonexistence}
\rsp{Reduction to a protein-only system is not possible when $\mu \sim 1$. More precisely:} \SJJJ{Fix} $\mu>0$ and consider $\sigma\to 0$. \SJJJ{There exists} a domain $K'$ \SJJJ{on which there does not exist a 2-dimensional} locally invariant manifold \SJJJ{of system \eqref{eq:main0}} which is $C^0$ $o(1)$-close (uniformly over the domain $K'$) to \SJJJ{the QSSR manifold $\mathcal M(\sigma)$} as $\sigma\to 0$.
%   The parameter $\mu$ is the true measure of the time-scale separation. More precisely, there is a point $(r_{a0},r_{b0},p_{a0},p_{b0})\in \mathcal M_0(\sigma)$,  which is independent of $\mu$ and $\sigma$, such that the derivative of the function $(r_a,r_b,p_a,p_b)\mapsto r_a-h^+(p_b;1,\sigma^{-1})$ in the direction of the vector-field induced by \eqref{eq:main0} at the point $(r_{a0},r_{b0},p_{a0},p_{b0})$ is given by
%  \begin{align*}
%   -\frac{\delta}{4} \mu (1+\mathcal O(\sigma)).
%  \end{align*}
%  Here $\mathcal O(\sigma)$ is uniform with respect to $\mu>0$. 
%   
%   that the vector-field induced by \eqref{eq:main0} is transverse to $\mathcal M_0(\sigma)$ for all $0<\sigma\ll 1$...}
%   % Suppose that $\mu=\mathcal O(1)$ with respect to $\sigma\to 0$. Then there is no locally invariant manifold \SJJ{which is} uniformly close to the approximation offered by QSSR as $\sigma\to 0$.
\end{enumerate}
\end{thm}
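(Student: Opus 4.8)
The whole difficulty is the non-uniformity of Fenichel theory as $\sigma\to0$, caused by the Hill terms becoming steep (with $p$-derivatives of order $\sigma^{-1}$) across the switching manifold $\Sigma=\Sigma_a\cup\Sigma_b$. The plan is to desingularise this by a geometric blow-up of $\Sigma$ and of the corner $\Sigma_a\cap\Sigma_b$, using the scaling \eqref{eq:mu_scaling} to turn the problem into a single \emph{smooth} slow--fast system whose only small parameter is $\mu$, with $\sigma$ entering regularly. Concretely I would introduce rescaled coordinates $p_a=1+\sigma u_a$ and/or $p_b=1+\sigma u_b$ near the steep layers; then $h^\pm(p_i;1,\sigma^{-1})$ converge as $\sigma\to0$ to the smooth logistic profiles $S(u)=(1+e^{-u})^{-1}$ (and $S(-u)$), uniformly on compact $u$-sets and smoothly in $\sigma$, while the protein equations become $\dot u_i=\mu(\cdots)$. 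Thus $(r_a,r_b)$ are fast with $\sigma$-independent eigenvalues $-1,-\gamma$, $(u_a,u_b)$ are slow, and the critical manifold $r_a=h^+,\,r_b=\gamma^{-1}h^-$ is uniformly attracting and normally hyperbolic.

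\textbf{Part 1.} I would cover $K$ by an outer region (away from $\Sigma$, where the Hill terms are exponentially close to constants and $\eps=\mu\sigma$ is the perturbation), two edge charts (blowing up one protein variable), and a corner chart (blowing up both). In each chart the blown-up field is $C^\infty$ and slow--fast in $\mu$, so Fenichel's theorem yields an attracting locally invariant $C^k$ slow manifold for $\mu\in[0,\mu_0)$, with $\mu_0=\mu_0(k)$ \emph{independent of $\sigma$}, together with the usual $O(\mu)$ estimate relative to the critical manifold. Fenichel's coordinate-free formulation then lets these local manifolds be identified on overlaps and glued into a single $\mathcal M_\mu(\sigma)$; transported back to the original variables this is the $C^k$ graph over $(p_a,p_b)\in K$ of item (a), and the $O(\mu)$ estimates (which in the corner are genuinely $O(\mu)$ and in the outer region are $O(\mu\sigma)$) give the uniform $C^0$ closeness of item (b). I expect the \emph{main obstacle} to be precisely the verification that the normal-hyperbolicity constants, and hence $\mu_0$, can be chosen uniformly in $\sigma$ across all charts, and that the chart manifolds match on overlaps.

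\textbf{Part 2.} On $\mathcal M_\mu(\sigma)$ I would compute the reduced flow to first order. Solving the invariance equation perturbatively gives $r_a=h^+(p_b)-\eps(h^+)'(p_b)f_b^0+O(\eps^2)$ and $r_b=\gamma^{-1}h^-(p_a)-\eps\gamma^{-2}(h^-)'(p_a)f_a^0+O(\eps^2)$, where $f_a^0,f_b^0$ are the right-hand sides of \eqref{eq:M0red}. At the equilibrium $(p_a^*,p_b^*)$ one has $f_a^0=f_b^0=0$, so the correction terms vanish but their derivatives do not, and the trace of the reduced Jacobian is
\[
\tr=-(1+\delta)+\eps\,\xi_a\xi_b\,(h^+)'(p_b^*)\,|(h^-)'(p_a^*)|\,\frac{\delta(1+\gamma)}{\gamma^2}.
\]
Using the identity $\sigma(h^+)'(p)=p^{-1}h^+(1-h^+)$, the equilibrium relations $p_a^*=\xi_a h^+(p_b^*)$, $p_b^*=\xi_b\gamma^{-1}h^-(p_a^*)$, and $\eps=\mu\sigma$, the correction collapses to $\tfrac{\mu}{\sigma}\tfrac{\delta(1+\gamma)}{\gamma}(1-h_a)(1-h_b)$ with $h_a=h^-(p_a^*),\,h_b=h^+(p_b^*)\in(0,1)$. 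Hence $\tr=0$ forces $\mu/\sigma=\alpha/[(1-h_a)(1-h_b)]$ with $\alpha$ as in \eqref{eq:alpha}; since $(1-h_a)(1-h_b)<1$ this ratio always exceeds $\alpha$, so $\tr<0$ throughout $\{\mu<\alpha\sigma\}$ (no Hopf), whereas letting $\xi_a,\xi_b$ grow drives $(1-h_a)(1-h_b)\to1$ and realises $\tr=0$ for any $\mu/\sigma>\alpha$. I would close by checking that the determinant is positive and the crossing transversal, to certify a genuine Hopf.

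\textbf{Part 3.} The same first-order computation supplies the obstruction when $\mu$ is fixed and $\sigma\to0$. Along any trajectory of \eqref{eq:main0} the quantity $w:=r_a-h^+(p_b)$ obeys $\dot w=-w-\eps\delta(h^+)'(p_b)(\xi_b r_b-p_b)$, whose forcing has size $\eps(h^+)'(p_b)=\mu\sigma\cdot O(\sigma^{-1})=O(\mu)$ whenever $p_b$ lies within $O(\sigma)$ of $1$. Choosing $K'$ to be a neighbourhood of a point of $\Sigma_b$ with $p_a$ (hence $h^-(p_a)$) away from $1$ and $\xi_b r_b-p_b$ bounded away from $0$, a trajectory that stayed on a putative $o(1)$-close invariant manifold would spend $O(1)$ time in this steep layer, so $w$ relaxes to its $O(\mu)=O(1)$ quasi-steady value, contradicting $o(1)$-closeness. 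The one delicate point is to run this for \emph{merely $C^0$-close} (a priori non-graph, non-smooth) manifolds, which I would do through the integrated form of the $w$-equation along the flow rather than an invariance-equation argument.
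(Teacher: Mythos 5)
Your proposal is correct and follows essentially the same route as the paper: a blow-up of $\Sigma$ and of the corner $\Sigma_a\cap\Sigma_b$ rendering the system uniformly slow--fast in $\mu$ with Fenichel applied chart-by-chart for assertion 1 (the paper uses exponential coordinates $p_i=\me^{\eta\bar u}$ and a sphere-plus-cylinders blow-up with 13 charts, but this is the same decomposition as your outer/edge/corner charts); a first-order trace computation at the equilibrium whose correction term $\frac{\mu}{\sigma}\frac{\delta(1+\gamma)}{\gamma}(1-h_a)(1-h_b)$ agrees exactly with the paper's formula in the chart $K_2$, together with the same surjectivity-onto-$(\alpha,\infty)$ argument, for assertion 2; and the same transversality (Lie-derivative) obstruction at a point of $\Sigma_b$ with $p_a>1$ for assertion 3. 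The only substantive caution is that your assertion-2 expansion in the original variables must be justified via the corner chart (where the $\mathcal O(\mu^2)$ remainders are uniform and differentiable in $\sigma$), which your Part 1 already provides.
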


Figure \ref{fig:par_planes} shows a schematic representation of Theorem~\ref{thm:thm0} in the $(\sigma, \eps)$ small parameter plane. Here, the invariant manifold $\mathcal M_{\mu}(\sigma)$ (see assertion \ref{existence}) exists in the triangular region bounded below the line $\eps = \mu_0 \sigma$. Figure \ref{fig:par_planes} also showcases assertion \ref{kappa}; notice that the line $\mu=\alpha\sigma$ becomes $\epsilon=\alpha \sigma^2$ in the $(\sigma,\eps)$-plane. This curve distinguishes (asymptotically as $(\sigma,\eps)\to \SJJ{(0, 0)}$) subregions within which it is possible or impossible for the system to undergo a Hopf bifurcation under variation of $\xi_a$ and $\xi_b$; these are the regions in shaded red and blue respectively. We will present a more detailed statement regarding the curve $\eps =\alpha\sigma^2$ below \SJJ{in} Proposition \ref{prop:kappa}.  

The fact that the reduced system on $\mathcal M_{\mu}(\sigma)$ -- which depends upon higher order corrections that are not accounted for in a QSSR, see \eqref{eq:M0red} --  may or may not undergo a Hopf-type bifurcation depending on the size of $\mu$ (cf. assertion \ref{kappa} of Theorem~\ref{thm:thm0}), shows that \SJJ{equation} \eqref{eq:mu_scaling} with $0<\mu\ll 1$ is necessary but not sufficient for the validity of the QSSR (loosely speaking, we consider a QSSR to be `valid' if it predicts the same qualitative dynamics as the original non-reduced problem). In particular, \SJJ{our results show} that QSSR is \SJJ{\textit{not valid}} in the red region of Figure \ref{fig:par_planes}, cf.~Lemma~\ref{lemma:qssr}\SJJ{, even though $0 < \mu \ll 1$}.

\begin{figure}[t!]
\centering
\quad \quad
\includegraphics[scale=0.4]{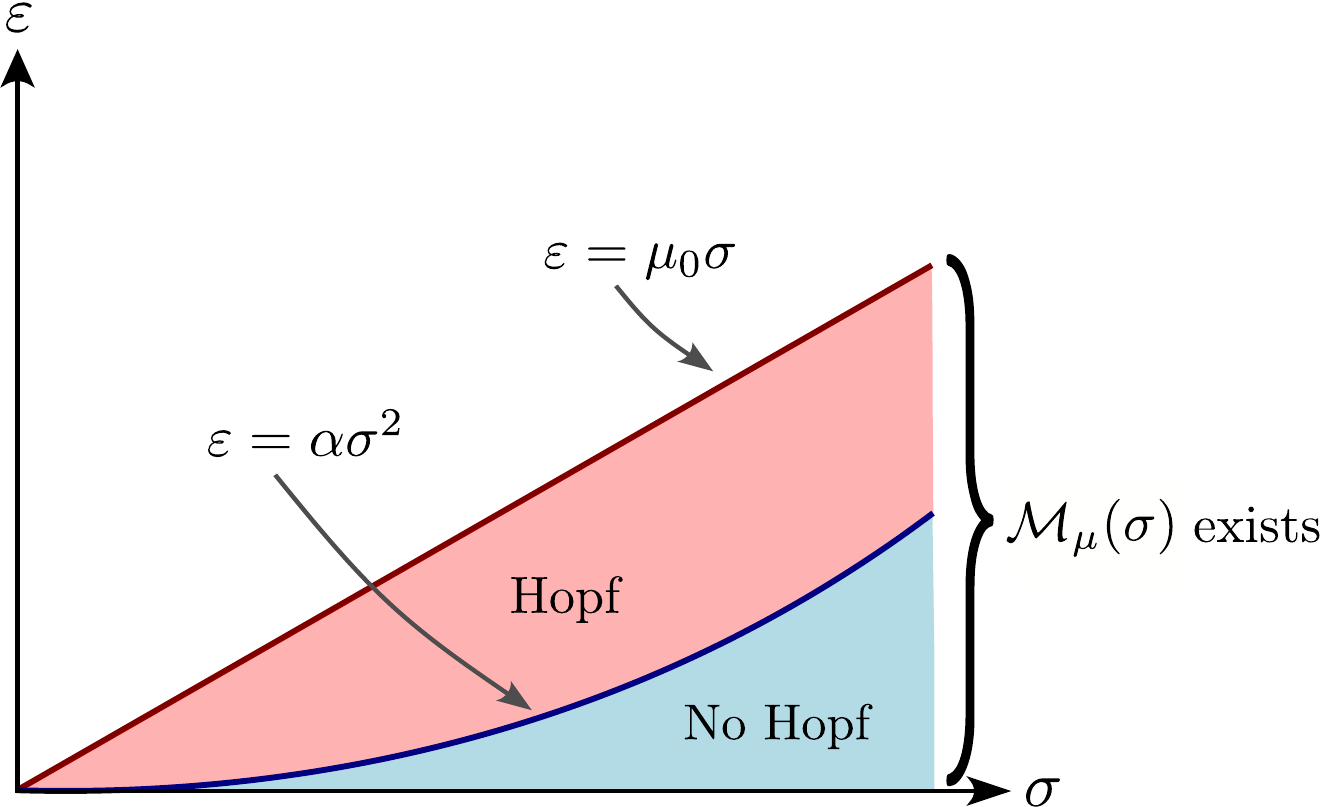}
%		\quad
%		\includegraphics[scale=0.5]{./figures/par_plane_2}
\caption{The positive quadrant of the $(\sigma,\eps)$-plane is shown. The line $\eps = \mu_0 \sigma$, i.e.~$\mu = \mu_0$, defines a boundary below which our first main result (Theorem \ref{thm:thm0} assertion \ref{existence}) guarantees the existence of $\mathcal M_{\mu}(\sigma)$. The  quadratic curve $\eps = \alpha \sigma^2$ in dark blue divides the region defined by $0<\eps < \mu_0 \sigma$ into two distinct (asymptotic) subregions. Hopf bifurcations are possible (impossible) in the red (blue) subregion; see assertion \ref{kappa} of Theorem \ref{thm:thm0} and the discussion in the text for details.}
\label{fig:par_planes}
\end{figure}

\begin{remark}
\SJ{The manifold $\mathcal M_\mu(\sigma)$ can be treated like a `global' invariant manifold in many cases, since it is forward invariant on suitably chosen but arbitrarily large choices of the compact set $K$. For example, if $K = [0,A] \times [0,B]$ for any arbitrarily large but fixed $A, B > 0$ satisfying $A > \xi_a$ and $B > \xi_b \gamma^{-1}$, then $\mu_0 > 0$ can be chosen sufficiently small that the manifold $\mathcal M_\mu(\sigma)$ is forward invariant as a graph over $K$ for all $0 < \mu < \mu_0$.}
\end{remark}

\begin{remark}\label{rem:nonexistence}
\SJJ{Assertion} \ref{nonexistence} of Theorem~\ref{thm:thm0} \SJJ{shows that system} \eqref{eq:main0} with $\eps=\mu \sigma$, $\mu>0$ fixed, $0<\sigma\ll 1$, cannot in general be reduced to a protein-only system that remains $o(1)$-close to \SJJ{the} QSSR over the entire physically relevant domain. \SJJ{However,} it is still possible to perform (rigorous) local \SJ{reductions} as long as we stay uniformly away from $\Sigma$. This is also true for the general \SJJ{network} \eqref{eq:grn_network} (with $\Sigma=\cup_{i=1}^N \{p_i=\theta_i\}$) and it is a consequence of the fact that the limits \eqref{eq:Hill_fns} are uniform with respect to $p_i\ge 0$ on compact intervals $[a,b]$, $0\le a<b$, with $\theta_i\notin [a,b]$.
\end{remark}

\begin{remark}
\SJ{\rsp{Recall that the authors in \cite{Polynikis2009}} showed that the Hopf-type bifurcation observed in system \eqref{eq:main0} `disappears' after QSSR, when $\sigma > 0$ is kept fixed and positive \rsp{(we refer back Lemma \ref{lemma:qssr} and Figure \ref{fig:lcnolc})}. This is consistent with Theorem \ref{thm:thm0}, and can be understood in terms of Figure \ref{fig:par_planes}: taking $\eps \to 0$ along any fixed line $\sigma > 0$ will lead to a limiting system which corresponds to a point on the positive $\sigma$-axis, which is contained within the shaded blue region where Hopf bifurcation is not possible. %They take limits along horizontal or vertical lines in the $(\sigma,\eps)$-plane. They notice, for example, that the Hopf disappears when $\sigma > 0$ and $\eps \to 0$. This is because such limits always tend to points on the positive $\sigma$ axis, which is within the blue region. 
Notice, however, that Hopf bifurcation is still possible as $\eps \to 0$ as long as (i) $\sigma \to 0$ as well and (ii) $(\sigma, \eps) = (0,0)$ is approached from the interior of the red region. \rsp{It follows that system \eqref{eq:main0} can oscillate for arbitrarily small values of $\eps$, as long as the ratio $\mu = \eps / \sigma$ is of a suitable magnitude.}

More generally, Figure \ref{fig:par_planes} shows that the qualitative dynamics of system \eqref{eq:main} as $\SJJ{(\sigma, \eps)} \to (0,0)$ depends in an important way on \textit{how} the point $(0,0)$ is approached, i.e.~on the path in $\SJJ{(\sigma, \eps)}$-space along which the limit is taken. This is a subtle but important point, because it shows that the relative size of the small parameters $\eps$ and $\sigma$ (and not only the fact that both $\eps \to 0$ and $\sigma \to 0$) plays an important role in determining the qualitative dynamics of the model.}
\end{remark}

\rsp{We now provide a brief overview of the proof, deferring the details to Section \ref{sec:proofs} and Appendix \ref{app:blow-up}. In order to resolve the loss of smoothness along the switching manifold $\Sigma$ when $\sigma = 0$, we} follow \cite[Section 8]{kaklamanos2019a}, which uses blow-up to analyse regularized PWS systems with \SJ{intersecting} discontinuity sets. We also refer to \cite{Kristiansen2019c,Kristiansen2015b,Kristiansen2015a,Kristiansen2019,Llibre2009} for earlier work on the use of blow-up for regularized PWS systems and \SJJ{to} \cite{Jelbart2022b,Kristiansen2019d,Miao2020,Jelbart2024} for its use in applications. 
In particular, we first consider the extended system obtained \SJ{after appending the trivial equation} $\sigma'=0$ to \SJJ{system} \eqref{eq:main0}$_{\eps=\mu\sigma}$, recall \eqref{eq:mu_scaling}. This system loses smoothness along $(r_a,r_b,p_a,p_b,\sigma)\in \Sigma\times \{0\}$, but we gain smoothness \SJJ{after applying} a sequence of transformations that blow up $\Sigma \times \{0\}$; the result is illustrated schematically in Figure \ref{fig:blow-up0} using a projection \SJJ{onto} $(p_a,p_b,\sigma)$-space. For the specific form of the blow-up transformations in the context of \SJ{system} \eqref{eq:main0}, we draw inspiration from the MSc thesis \cite{Frieder2018}, where the planar ``fundamental problem" from \cite{Plahte2005} was analysed.

\begin{figure}[t!]
\centering
\subfigure[\qquad \qquad \ \ ]{\includegraphics[scale=0.35]{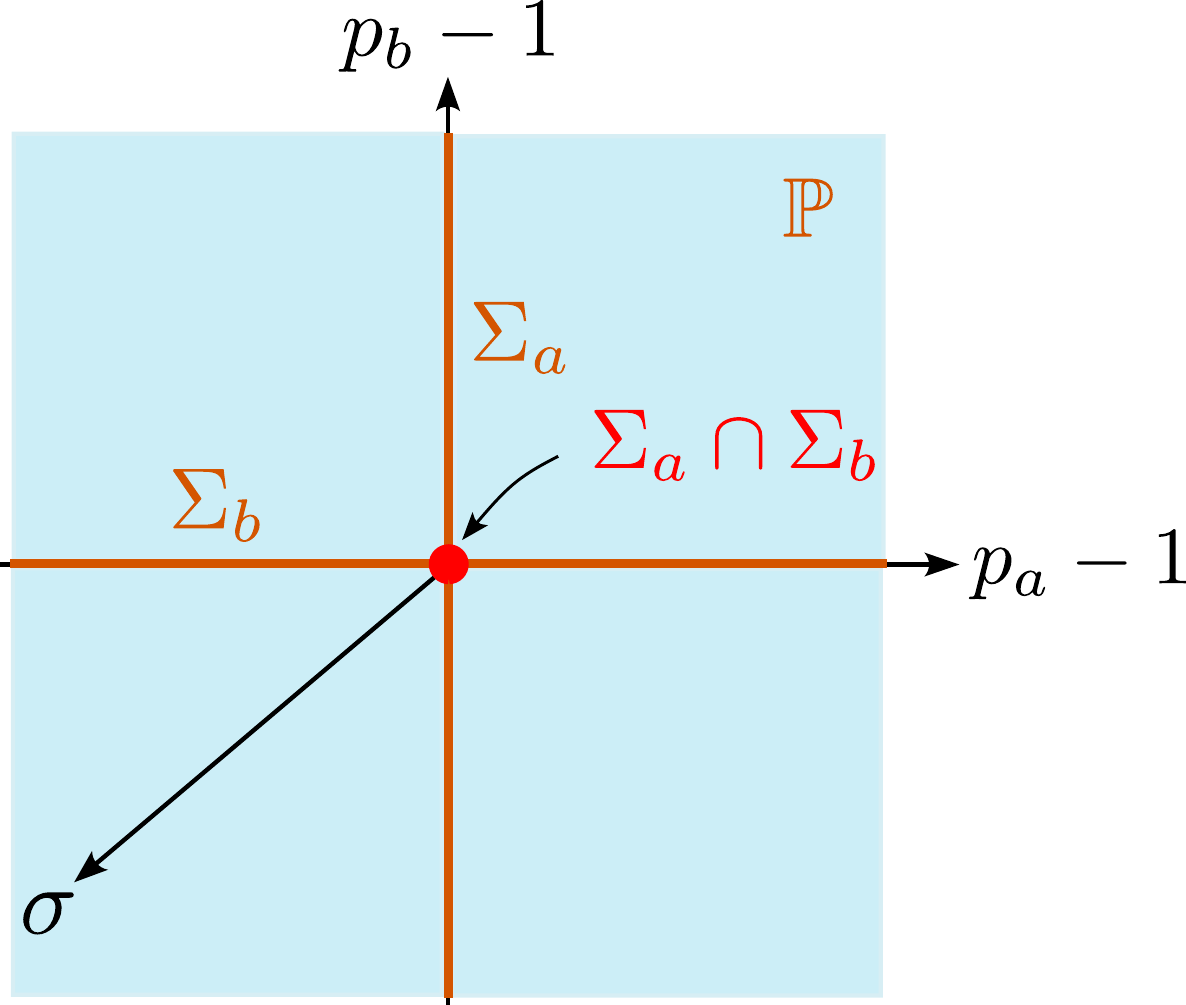}} \ \ 
\subfigure[\qquad \qquad]{\includegraphics[scale=0.35]{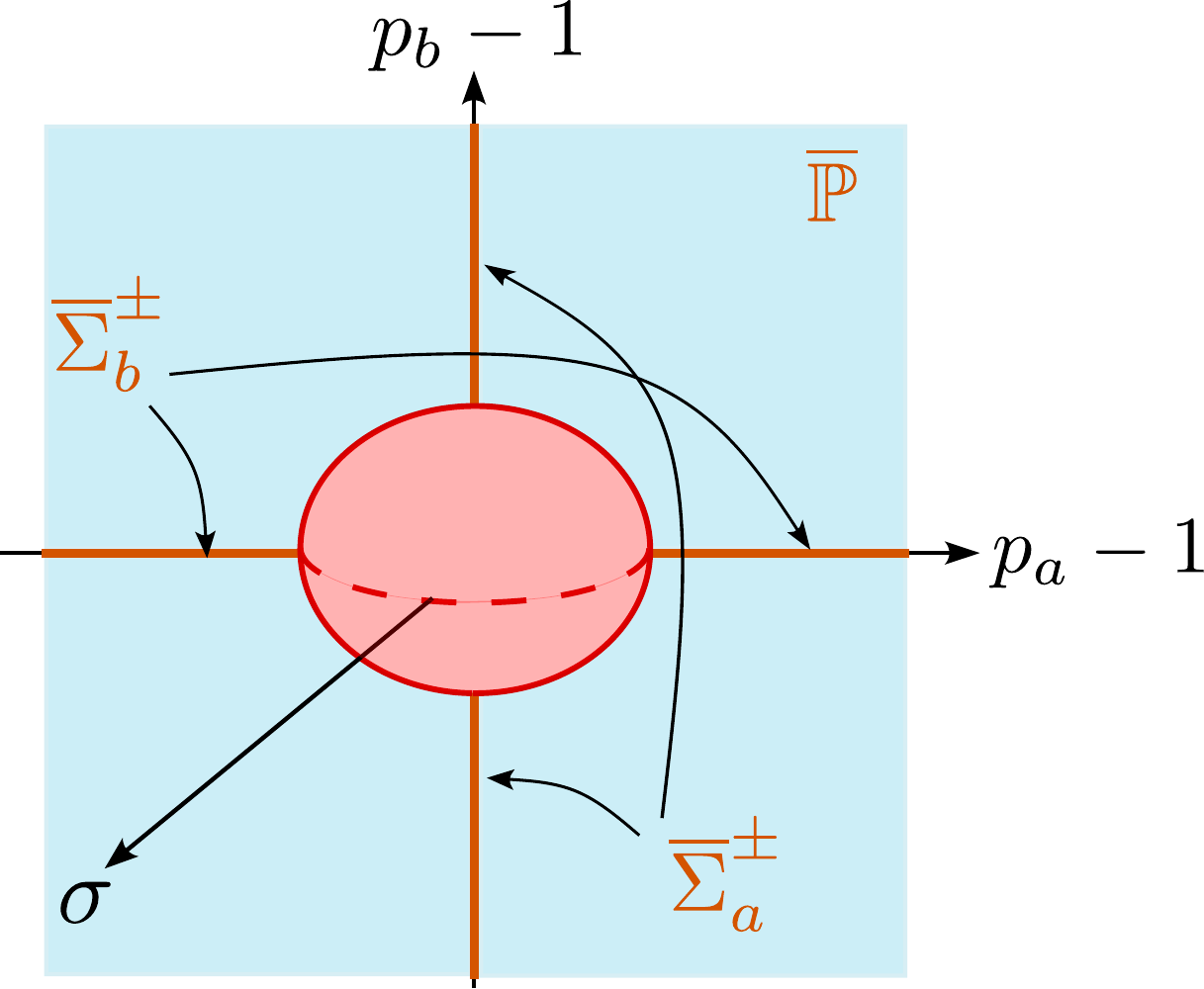}}
\quad \quad \subfigure[\qquad \qquad \ ]{\includegraphics[scale=0.35]{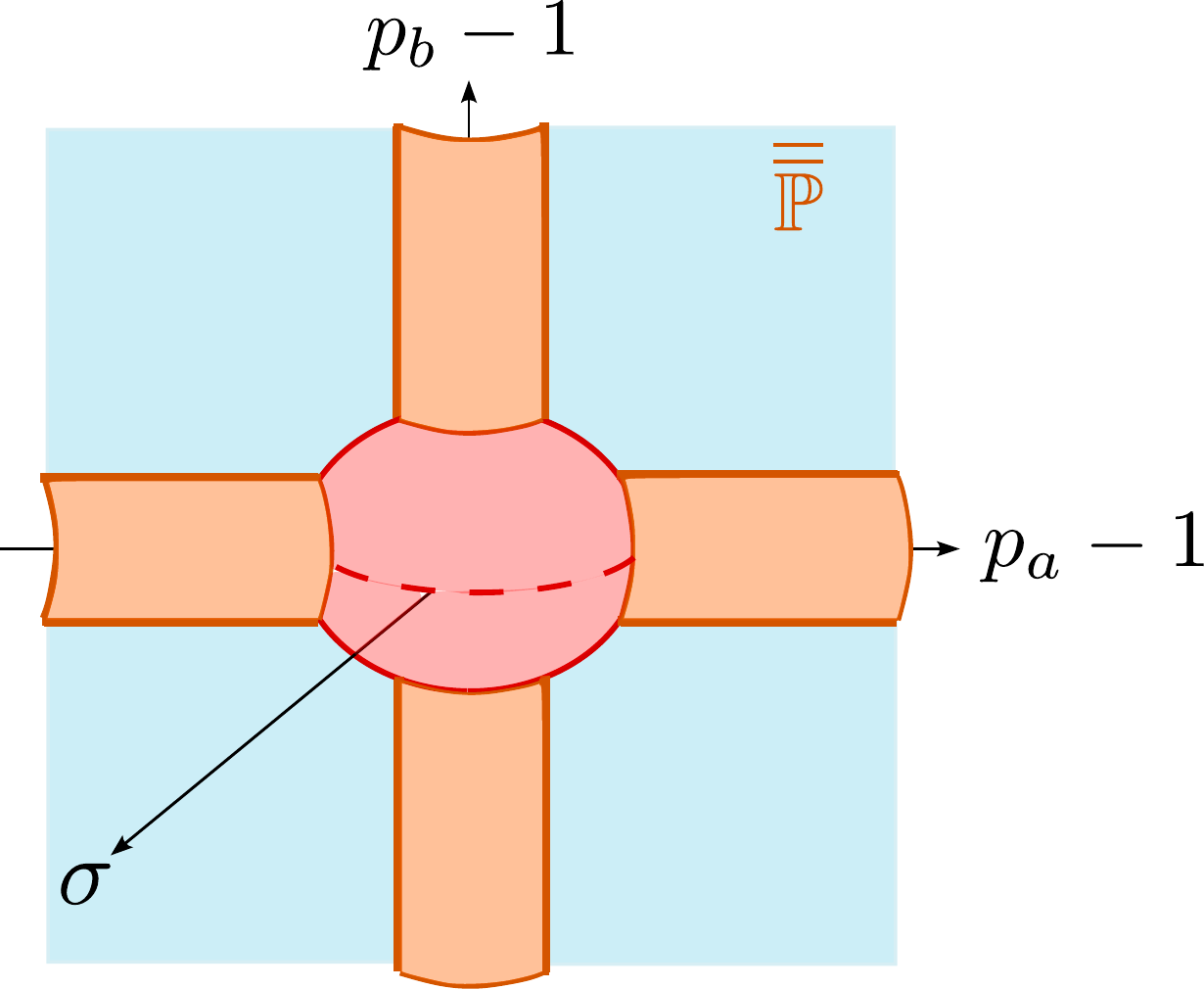}}
\caption{\SJJ{Schematic representation of the sequence of blow-up transformations which allow us to resolve the loss of smoothness along the singular set $\Sigma_a\cup \Sigma_b$ in system \eqref{eq:main0} when $\sigma = 0$}. In (a), $\Sigma_a$ and $\Sigma_b$ are illustrated (as projections) within the $(p_a,p_b)$-plane. To resolve the lack of smoothness, we first resolve the intersection $\Sigma_a\cap \Sigma_b$ (red) \SJJ{with a} spherical blow-up transformation. This gives the intermediate blown\SJ{-}up space $\overline{\mathbb P}$ in (b). Subsequently, we blow-up the remaining singularities $\overline \Sigma_b^\pm$ and $\Sigma_a^\pm$ along the axes, using cylindrical blow-ups. This gives the final blown up space $\overline{\overline{\mathbb P}}$ in (c).}
\label{fig:blow-up0}
\end{figure}

\SJJ{We} will show that \SJJ{the QSSR constraint in} \eqref{eq:M0} defines a normally hyperbolic and attracting critical manifold $\overline{\overline{ \mathcal M}}$ for ${\mu}=0$ on the blown up space \SJJJ{obtained after consecutive blow-ups}. We emphasise that this fact relies upon the scaling \eqref{eq:mu_scaling}. In this way, we obtain a locally invariant slow manifold $\overline{\overline{\mathcal M}}_{\mu}$ for all $0<\mu<\mu_0$ by Fenichel's coordinate-independent theory \SJJ{for} normally hyperbolic invariant manifolds (see \cite[Thm.~9.1]{Fenichel1979} in particular), and finally
$\mathcal M_{\mu}(\sigma)$ in Theorem~\ref{thm:thm0} upon the process of `blowing down' to the original $(r_a,r_b,p_a,p_b)$-space. As stated, the perturbed manifold $\mathcal M_\mu(\sigma)$ is $C^0$ $\mathcal O(\mu)$-close to \SJJ{the QSSR manifold defined via \eqref{eq:M0},} uniformly with respect to $\sigma>0$. Higher order $C^k$-closeness is not possible (globally) as the derivatives become unbounded with respect to $\sigma\to 0$. This should be clear enough from the dependency of $\sigma^{-1}$ in the equations for $r_a$ and $r_b$ in \SJJ{system} \eqref{eq:main0}. However, these unbounded derivatives are `tamed' in our blow-up coordinates. At the same time, the blow up as $\sigma\to 0$ is only relevant near $\Sigma$; away from these sets we have $C^k$-closeness (in agreement with the statement in Remark \ref{rem:nonexistence}).

% GRN's aside, it is worthy to note that the geometric approach to switching in ODE models in this work and in, e.g.,~\cite{Jelbart2022b,Kristiansen2019d,Miao2020,Jelbart2024}, rely on techniques that were developed for the study of regularised PWS systems in e.g.~\cite{Kristiansen2019c,Kristiansen2015b,Kristiansen2015a,Kristiansen2019,Llibre2009}. 

\SJJ{Once we have proven Theorem \ref{thm:thm0}, we show how the results} can be used \SJJJ{for} further analytical and numerical investigations of the dynamics of system \eqref{eq:main0}. Although we focus entirely on the dynamics of a simple, $4$-dimensional GRN model, our results suggest that the dynamics of high-dimensional GRN systems in the general form \eqref{eq:grn_network} may depend in an important way on the relative size of $\eps$ and the small parameters $1/n_i$ associated with switching effects. The question of whether and how our results can be extended to the study of the generally high-dimensional network \eqref{eq:grn_network} is the topic of on-going work. \SJJJ{We are hopeful that a geometric blow-up based approach will provide a means to study the effects of} % We would also like to stress the novelty and suitability of our approach, which heavily relies on the geometric blow-up method as a means for analysing the dynamics of the model and the affects of 
both (i) and (ii) in a single, geometrically informative and rigorous mathematical framework.

\subsection{Overview}
The \SJJJ{remainder of the paper} is structured as follows: %In Section~\ref{sec:blow-up}, we present \SJJJ{the} blow-up \SJJJ{construction}. In particular, we demonstrate (by working in local coordinate charts) that \eqref{eq:M0} defines an attracting and normally hyperbolic critical manifold for $\mu=0$ on a blown-up space. 
%\SJJ{Section~\ref{sec:proofs} is devoted to the proof of Theorem~\ref{thm:thm0}}. 
\rsp{In Section~\ref{sec:proofs} we prove Theorem~\ref{thm:thm0}; the proof of Assertion \ref{existence} is deferred to Appendix~\ref{app:blow-up} for brevity.}
\SJJ{In Section~\ref{sec:the_model} we use Theorem \ref{thm:thm0} \SJJJ{for} further analytical and numerical investigations \SJJJ{of} the dynamics of system \eqref{eq:main0} by using numerical computations and our reduction in Theorem~\ref{thm:thm0}, thereby demonstrating the applicability of our results}. We conclude with a summary and outlook in Section \ref{sec:summary_and_outlook}.

\section{Proofs} \label{sec:proofs}
\rsp{The proof of Assertion \ref{existence} of Theorem \ref{thm:thm0} is available in Appendix~\ref{app:blow-up}, see also \ref{app:existence}}. We prove the \rsp{remaining assertions} in the following subsections.

%\subsection{Proof of assertion \ref{existence} of Theorem \ref{thm:thm0}}

%\subsection{Proof of assertion \ref{kappa} of Theorem \ref{thm:thm0}}\label{sec:kappa}

\subsection{Assertion \ref{kappa}}
\label{sec:kappa}

In this section, we work \rsp{with following system:
\begin{equation}\label{eq:K2_system_2}
\begin{aligned}
r_a' &= {\phi}(v_2) - r_a , \\
r_b' &= 1-{\phi}(u_2) - \gamma r_b , \\
u_2' &= \mu G_a(r_a, \sigma u_2) , \\
v_2' &= \mu  G_b(r_b, \sigma v_2) ,
\end{aligned}
\end{equation}
see also \eqref{eq:K2_system} (with $\eta_2=\sigma$) in Appendix~\ref{app:blow-up}, which is obtained from system \eqref{eq:main0} using $\eps = \mu \sigma$ as in \eqref{eq:mu_scaling}, together with the coordinate transformation
\begin{equation}
\label{eq:K2_coords}
(p_a,p_b)=(\me^{\sigma u_2},\me^{\sigma v_2}),
\end{equation}
and the notation
\begin{equation}\nonumber
\begin{aligned}
G_a(x,y) := \xi_a x \me^{-y} - 1 , \qquad 
G_b(x,y) := \delta( \xi_b x \me^{-y} - 1 ).
\end{aligned}
\end{equation}
%see \eqref{eq:fafb}.} 
Due to the form of the coordinate transformation \eqref{eq:K2_coords}, system \eqref{eq:K2_system_2} describes the dynamics in a `scaling chart' which is centered about the intersection point $\{(1,1)\} = \Sigma_a \cap \Sigma_b$ (since $p_a \sim 1 + \mathcal \sigma u_2$ and $p_b \sim 1 + \sigma v_2$ as $\sigma \to 0$).} We consider $\mu$ and $\sigma$ as independent small parameters. We are interested in the dynamics on \rsp{a} two-dimensional \rsp{attracting} slow manifold $\mathcal M_{\mu,2}=\mathcal M_{\mu,2}(\sigma)$ of \eqref{eq:K2_system_2}. \rsp{The existence and properties of $\mathcal M_{\mu,2}=\mathcal M_{\mu,2}(\sigma)$ given in Lemma \ref{lem:slow_manifolds_2} of Appendix \ref{app:blow-up}}. Restricting \eqref{eq:K2_system_2} to $\mathcal M_{\mu,2}(\sigma)$ \rsp{(using the form in Lemma \ref{lem:slow_manifolds_2})}, we obtain the planar system
\begin{equation}
\label{eq:K2_system_slow_manifold}
\begin{split}
\dot u_2 &= G_a ({\phi}(v_2), \sigma u_2) + \mu \xi_a \me^{- \sigma u_2} \Omega_a(u_2, v_2, \sigma) + \mathcal O(\mu^2) , \\
\dot v_2 &= \delta \left( G_b (\gamma^{-1} (1-{\phi}(u_2)), \sigma v_2) + \mu \xi_b \me^{- \sigma v_2} \Omega_b(u_2, v_2, \sigma) + \mathcal O(\mu^2) \right) ,
\end{split}
\end{equation}
where the $\mathcal O(\mu^2)$ terms are uniform with respect to $\sigma$. In particular, since \eqref{eq:K2_system_2} depends regularly upon $\sigma$, these $C^k$-smooth remainder terms can in this chart be differentiated with respect to $u_2,v_2$ and $\sigma$ without changing the order. %In the following, we show that system \eqref{eq:K2_system_slow_manifold} has a unique equilibrium point $q_2$ which may undergo a Hopf bifurcation.

The limiting/reduced flow on $\mathcal M_{2,\mu}(\sigma)$ when $\mu = \sigma = 0$ is governed by the system
\begin{equation}
\label{eq:K2_reduced_mu}
\begin{split}
\dot u_2 &= \xi_a {\phi}(v_2) - 1 , \\
\dot v_2 &= \delta \left( \xi_b \gamma^{-1} (1-{\phi}(u_2)) - 1 \right) ,
\end{split}
\end{equation}
which has a unique equilibrium point $$q_2 : (u_2^\ast, v_2^\ast) = (\ln(\xi_b - \gamma) - \ln (\gamma), -\ln(\xi_a - 1))$$ if and only if $\xi_a > 1$ and $\xi_b > \gamma$. Using this fact and applying the implicit function theorem, it follows that for each fixed choice of $\xi_a > 1$, $\xi_b > \gamma$, system \eqref{eq:K2_system} has a unique equilibrium 
\begin{align}\label{eq:q2musigma}
q_{2,\mu,\sigma} = q_2 + \mathcal O(\mu,\sigma),
\end{align} for all $\mu, \sigma > 0$ sufficiently small. The Jacobian matrix associated with the reduced system \eqref{eq:K2_reduced_mu} is given by
\[
J_0(u_2,v_2) =
\begin{pmatrix}
0 & \xi_a {\phi}(v_2) (1-{\phi}(v_2)) \\
- \delta \xi_b \gamma^{-1} {\phi}(u_2) (1-{\phi}(u_2)) & 0
\end{pmatrix} .
\]
Since $\operatorname{tr} J_0(u_2,v_2) \equiv 0$ and $\det J_0(u_2, v_2) > 0 $ for all $(u_2, v_2) \in \R^2$ and $(\xi_a, \xi_b) \in (1,\infty) \times (\gamma, \infty)$, it follows that $q_2$ is a center. 
\begin{remark}
\label{rem:Hamiltonian}
\SJ{Although \rspp{our arguments do not rely on it explicitly}, we note that the} limiting system in \eqref{eq:K2_reduced_mu} is Hamiltonian with Hamiltonian function
\[
H(u_2, v_2) := \frac{\delta \xi_b}{\gamma} \ln \left( 1 + \me^{u_2} \right) + \xi_a \ln \left( 1 + \me^{v_2} \right) - \delta \left( \frac{\xi_b}{\gamma} - 1 \right) u_2 - v_2 .
\]
% 		The Hamiltonian structure is destroyed by the perturbation in system \eqref{eq:K2_system_slow_manifold}, which is what leads to a change in the qualitative dynamics when $\mu$ and $\sigma$ are arbitrarily small but positive.
\end{remark}
\SJJ{In} order to determine the stability of $q_{2,\mu,\sigma}$, we need to look to the next order in $\mu, \sigma$. Let $J_{\mu,\sigma}(u_2,v_2)$ denote the Jacobian matrix associated with system \eqref{eq:K2_system_slow_manifold}. Direct calculations together with our calculations for $\mu = \sigma = 0$ show that %$\det J_{\mu,\sigma}(u_2, v_2) = \det J_0(u_2, v_2) + \mathcal O(\mu, \sigma) > 0$ for all $\mu,\sigma$ sufficiently small, and that
\[
\begin{split}
\operatorname{tr} J_{\mu,\sigma}&(u_2, v_2) =
- \sigma \left[ \xi_a {\phi}(v_2) \me^{-\sigma u_2} + \delta \xi_b \gamma^{-1} (1-{\phi}(u_2)) \me^{- \sigma v_2} \right] \\
&+ \mu \left[ \xi_a \frac{\partial}{\partial u_2} \left( \me^{- \sigma u_2} \Omega_a(u_2, v_2, \sigma) \right) + \delta \xi_b \frac{\partial}{\partial v_2} \left( \me^{- \sigma v_2} \Omega_b(u_2, v_2, \sigma) \right) \right] + \mathcal O(\mu^2) .
\end{split}
\]
Expanding about $\sigma = 0$, using that
\[
\begin{split}
\frac{\partial \Omega_a}{\partial u_2} &= \delta \xi_b \gamma^{-1} {\phi}(u_2) (1-{\phi}(u_2)) {\phi}(v_2) h^-(v_2) + \mathcal O(\sigma) , \\
\frac{\partial \Omega_b}{\partial v_2} &= \gamma^{-2} \xi_a {\phi}(u_2) (1-{\phi}(u_2)) {\phi}(v_2)(1-{\phi}(v_2)) + \mathcal O(\sigma) ,
\end{split}
\]
which follows from \eqref{eq:omegaab},
and evaluating the expression at $q_{{\mu,\sigma,2}}$, leads to the asymptotic formula
\begin{equation}
\label{eq:trace}
\operatorname{tr} J_{\mu,\sigma}(q_{{\mu,\sigma,2}}) =  - \sigma (1 + \delta) + \mu \frac{ (\xi_a - 1) (\xi_b - \gamma )}{\xi_a \xi_b} \frac{\delta ( 1 + \gamma)}{\gamma} + \mathcal O( |(\mu, \sigma)|^2 ) .
%- \sigma \left[ \xi_a h^+(v_2) + \delta \xi_b \gamma^{-1} h^-(u_2) \right] + 
%\mu \xi_a \xi_b \delta (1 + \gamma) \gamma^{-2} h^+(u_2) h^-(u_2) h^+(v_2) h^-(v_2) + \mathcal O( |(\mu, \sigma)|^2 ) .
\end{equation}

% 	\medskip

\begin{lemma}\label{lem:Hopf}
Suppose $(\xi_a,\xi_b) \in \SJJ{\Lambda}$, \SJJ{for some compact and connected set} $\SJJ{\Lambda} \subset (1,\infty)\times (\gamma,\infty) $. \SJJ{There exists a $\sigma_0 > 0$ and a $C^k$-smooth function $\mu_{\textrm{Hopf}} : \Lambda \times [0, \sigma_0) \to \R$ such that system}
% compact, and that $0<\mu\ll 1$, $0<\sigma\ll 1$. Then there is a $\sigma_0>0$ small enough and a $C^k$-smooth function $\mu_{\textrm{Hopf}}(\xi_a,\xi_b,\sigma)$, $(\xi_a,\xi_b)\in \SJJ{\Lambda}$, $0\le \sigma<\sigma_0$ such that 
\eqref{eq:K2_system_2} undergoes Hopf bifurcations if and only if 
\begin{align*}
\mu = \mu_{\textrm{Hopf}}(\xi_a,\xi_b,\sigma).
\end{align*}
Here $\mu_{\textrm{Hopf}}(\xi_a,\xi_b,0)=0$ and 
\begin{align}\label{eq:dmuH}
\frac{\partial \mu_{\textrm{Hopf}}}{\partial \sigma}(\xi_a,\xi_b,0) = \frac{\xi_a \xi_b}{(\xi_a-1)(\xi_b-\gamma)} \frac{\gamma (1+\delta)}{\delta(1+\gamma)}.
\end{align}

\end{lemma}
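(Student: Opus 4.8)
The plan is to reduce the Hopf analysis of the four-dimensional system \eqref{eq:K2_system_2} to the planar reduced system \eqref{eq:K2_system_slow_manifold} on the slow manifold $\mathcal M_{\mu,2}(\sigma)$, and then to characterise the bifurcation set as the zero locus of the trace \eqref{eq:trace} via the implicit function theorem. First I would invoke the normal hyperbolicity of $\mathcal M_{\mu,2}(\sigma)$ from Lemma \ref{lem:slow_manifolds_2}: the two transverse (fast) eigenvalues of the full Jacobian at the equilibrium are $\mathcal O(\mu,\sigma)$-perturbations of $-1$ and $-\gamma$, and hence remain uniformly bounded away from the imaginary axis for all $(\xi_a,\xi_b)\in\Lambda$ and all $(\mu,\sigma)$ small. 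Consequently the full system can undergo a Hopf bifurcation only through the remaining pair of slow eigenvalues, i.e.~the eigenvalues of the reduced Jacobian $J_{\mu,\sigma}$, so it suffices to track these.

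Second, I would pin down the Hopf locus on the slow manifold. At $\mu=\sigma=0$ the reduced equilibrium $q_2$ is a center, with $\operatorname{tr} J_0(q_2)=0$ and $\det J_0(q_2)>0$; by continuity $\det J_{\mu,\sigma}(q_{2,\mu,\sigma})>0$ for all $(\mu,\sigma)$ small, uniformly on $\Lambda$. The slow eigenvalues therefore form a complex-conjugate pair with real part $\tfrac12\operatorname{tr}J_{\mu,\sigma}(q_{2,\mu,\sigma})$, so purely imaginary eigenvalues, and hence a Hopf bifurcation, occur precisely when this trace vanishes.

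Third, I would set $T(\xi_a,\xi_b,\mu,\sigma):=\operatorname{tr}J_{\mu,\sigma}(q_{2,\mu,\sigma})$, which is $C^k$-smooth by the smoothness of $\mathcal M_{\mu,2}(\sigma)$ and of $q_{2,\mu,\sigma}$ (recall \eqref{eq:q2musigma}). The expansion \eqref{eq:trace} gives $T(\xi_a,\xi_b,0,0)=0$ together with
\[
\frac{\partial T}{\partial \mu}\Big|_{(\mu,\sigma)=(0,0)} = \frac{(\xi_a-1)(\xi_b-\gamma)}{\xi_a\xi_b}\,\frac{\delta(1+\gamma)}{\gamma}>0,
\]
which is bounded away from zero uniformly on the compact set $\Lambda\subset(1,\infty)\times(\gamma,\infty)$. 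The implicit function theorem then yields a unique $C^k$ solution $\mu=\mu_{\textrm{Hopf}}(\xi_a,\xi_b,\sigma)$ of $T=0$ near $\sigma=0$ with $\mu_{\textrm{Hopf}}(\xi_a,\xi_b,0)=0$, and the uniqueness branch of the implicit function theorem supplies the ``only if'' direction. Implicit differentiation, using $\partial_\sigma T|_0=-(1+\delta)$, gives
\[
\frac{\partial \mu_{\textrm{Hopf}}}{\partial \sigma}(\xi_a,\xi_b,0) = -\frac{\partial_\sigma T}{\partial_\mu T}\Big|_0 = \frac{\xi_a\xi_b}{(\xi_a-1)(\xi_b-\gamma)}\,\frac{\gamma(1+\delta)}{\delta(1+\gamma)},
\]
which is exactly \eqref{eq:dmuH}. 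Transversality of the eigenvalue crossing follows immediately from $\partial_\mu T\neq 0$.

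The genuinely delicate step is not the trace computation, which is already given, but the rigorous reduction in the first paragraph: one must confirm that the fast directions stay uniformly hyperbolic across all of $\Lambda$ and all small $(\mu,\sigma)$, so that no eigenvalue other than the slow pair can reach the imaginary axis, and that the reduced vector field and its equilibrium inherit enough smoothness from Lemma \ref{lem:slow_manifolds_2} for the implicit function theorem to apply with $\sigma$ treated as a genuine parameter down to $\sigma=0$. I would also note that upgrading the eigenvalue crossing to a nondegenerate Hopf bifurcation requires the first Lyapunov coefficient to be nonzero; this genericity condition is separate from the curve $\mu_{\textrm{Hopf}}$ asserted here and can be verified (via the normal form, or numerically) as in Section \ref{sec:the_model}.
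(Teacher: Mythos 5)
Your proposal is correct and follows essentially the same route as the paper: characterise the Hopf locus by $\operatorname{tr} J_{\mu,\sigma}(q_{2,\mu,\sigma})=0$ with $\det J_{\mu,\sigma}>0$, use the expansion \eqref{eq:trace} to verify $\partial_\mu \operatorname{tr} J_{\mu,\sigma}|_{\mu=\sigma=0}\neq 0$, and solve for $\mu$ via the implicit function theorem, with \eqref{eq:dmuH} following by implicit differentiation. Your additional justification that the fast eigenvalues near $-1$ and $-\gamma$ stay uniformly hyperbolic (so the Hopf analysis legitimately reduces to the planar system on $\mathcal M_{\mu,2}(\sigma)$) makes explicit a step the paper leaves implicit, but it is the same argument, not a different one.
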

\begin{proof}
Hopf bifurcations are characterized by $\operatorname{tr} J_{\mu,\sigma}(q_{{\mu,\sigma,2}}) =0$ and $\det J_{\mu,\sigma}(q_{{\mu,\sigma,2}}) > 0$. \SJJ{Since} $\det J_0(q_{{2}})\ne 0$ and
\begin{align*}
\operatorname{tr} J_{0}(q_{{2}})=0,\quad \frac{\partial \operatorname{tr} J_{\mu,\sigma}(q_{{\mu,\sigma,2}})}{\partial \mu}\bigg|_{\mu=\sigma=0} = \frac{ (\xi_a - 1) (\xi_b - \gamma )}{\xi_a \xi_b} \frac{\delta ( 1 + \gamma)}{\gamma} \ne 0,
\end{align*}
we obtain \SJJ{the function $\mu_{\textrm{Hopf}}$ by} solving % Hopf bifurcations by solving 
$\operatorname{tr} J_{\mu,\sigma}(q_{{\mu,\sigma,2}}) =0$ locally for $\mu$ as a smooth function of $(\xi_a,\xi_b)\in \SJJ{\Lambda}$ and $0 \le \sigma<\sigma_0$, \SJJ{for} $\sigma_0>0$ small enough, using the implicit function theorem.
\end{proof}
\begin{remark}
It is easy to show that the Hopf bifurcation in Lemma \ref{lem:Hopf} is unfolded with respect to $(\xi_a,\xi_b)$, but we have not been able to analyze the Lyapunov coefficient (see e.g. \cite[p.~211]{Chow1994}) analytically. The expressions are overwhelming. However, numerical investigations in MatCont \cite{MATCONT} suggest that the %Hopf 
bifurcation is always supercritical.%, however, we were unable to prove this claim analytically due to the complicated form of the expression for the first Lyapunov coefficient (which was calculated using the formula in \cite[p.~211]{Chow1994}).
\end{remark} 

\medskip

\SJJ{We are now in a position to state and prove a more precise version of Theorem \ref{thm:thm0} assertion \ref{kappa}.} %In the following result, we state a formal version of assertion \ref{kappa} of Theorem \ref{thm:thm0}. % where we study the system \eqref{eq:K2_system_2} alongs rays in the $(\sigma,\mu)$-plane defined by $\mu=c\sigma$, $c>0$, $c\ne 0$, fixed. The result then says (informally) that there are only Hopf bifurcations along such rays with $c> \alpha$. 
\begin{proposition}\label{prop:kappa}
\SJ{Recall the definition of $\alpha > 0$ in \eqref{eq:alpha} and consider system} \eqref{eq:K2_system_2} along rays in the $(\sigma,\mu)$-plane defined by $\mu=c\sigma$, $\sigma>0$, with $c>0$, $c\ne \alpha$, fixed. Then there are only Hopf bifurcations (for $0<\sigma\ll 1$) along such rays with $c> \alpha$. More precisely, we have the following:
\begin{enumerate}
\item \label{ass1} Suppose that $c>\alpha$. Then there is a $\sigma_0>0$ small enough such that for all $0<\sigma<\sigma_0$ the system \eqref{eq:K2_system_2} with $\mu=c\sigma$ has a Hopf bifurcation 
%\SJJ{along a curve in the $(\xi_a, \xi_b)$-plane which is contained within $(1,\infty) \times (\gamma,\infty)$.} 
\KUK{for some $(\xi_a,\xi_b) \in (1,\infty) \times (\gamma,\infty)$}.
\item Suppose that $0<c<\alpha$ and fix any compact domain $\SJJ{\Lambda} \subset (1,\infty)\times (\gamma,\infty)$. Then there is a $\sigma_0>0$ small enough such that for all $0<\sigma<\sigma_0$ the system \eqref{eq:K2_system_2} with $\mu=c\sigma$ and bifurcation parameters  $(\xi_a,\xi_b)\in \SJJ{\Lambda}$ does not support a Hopf bifurcation.
\end{enumerate}

\end{proposition}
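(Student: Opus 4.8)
The plan is to obtain Proposition \ref{prop:kappa} as a direct consequence of Lemma \ref{lem:Hopf}, by rescaling the Hopf locus $\mu = \mu_{\textrm{Hopf}}(\xi_a,\xi_b,\sigma)$ along the rays $\mu = c\sigma$. Writing $g(\xi_a,\xi_b) := \frac{\xi_a \xi_b}{(\xi_a-1)(\xi_b-\gamma)}$, Lemma \ref{lem:Hopf} supplies $\mu_{\textrm{Hopf}}(\xi_a,\xi_b,0) = 0$ together with $\frac{\partial \mu_{\textrm{Hopf}}}{\partial \sigma}(\xi_a,\xi_b,0) = \alpha\, g(\xi_a,\xi_b)$, cf.~\eqref{eq:alpha} and \eqref{eq:dmuH}. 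Since $\mu_{\textrm{Hopf}}$ is $C^k$-smooth and vanishes identically on $\{\sigma = 0\}$, I would first use Hadamard's lemma to factor $\mu_{\textrm{Hopf}}(\xi_a,\xi_b,\sigma) = \sigma\, \Phi(\xi_a,\xi_b,\sigma)$ with $\Phi$ of class $C^{k-1}$ and $\Phi(\xi_a,\xi_b,0) = \alpha\, g(\xi_a,\xi_b)$. Along a ray $\mu = c\sigma$ with $\sigma > 0$, the Hopf condition $c\sigma = \mu_{\textrm{Hopf}}(\xi_a,\xi_b,\sigma)$ is then equivalent to the single scalar equation $\Phi(\xi_a,\xi_b,\sigma) = c$, and the entire proposition reduces to studying its solvability as $\sigma \to 0$.

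The decisive structural fact is that $g > 1$ on $(1,\infty) \times (\gamma,\infty)$ with range exactly $(1,\infty)$: indeed $g = \frac{\xi_a}{\xi_a-1}\cdot\frac{\xi_b}{\xi_b-\gamma}$ is a product of two factors, each lying in $(1,\infty)$, which tend jointly to $1$ as $\xi_a,\xi_b \to \infty$ and to $\infty$ as $\xi_a \to 1^+$ or $\xi_b \to \gamma^+$. Hence the limiting right-hand side $\Phi(\cdot,\cdot,0) = \alpha g$ takes values precisely in $(\alpha,\infty)$, and $c = \alpha$ is exactly the threshold below which $c$ falls outside this range. This is the origin of the dichotomy in the statement.

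For assertion \ref{ass1} ($c > \alpha$) I would fix a point $(\xi_a^0,\xi_b^0) \in (1,\infty)\times(\gamma,\infty)$ with $g(\xi_a^0,\xi_b^0) = c/\alpha$, which exists because $c/\alpha \in (1,\infty)$ lies in the range of $g$. There $\Phi(\xi_a^0,\xi_b^0,0) = c$ and a direct computation gives $\partial_{\xi_a}g = -\frac{1}{(\xi_a-1)^2}\cdot\frac{\xi_b}{\xi_b-\gamma} < 0$, so $\partial_{\xi_a}\Phi(\xi_a^0,\xi_b^0,0) = \alpha\,\partial_{\xi_a}g \neq 0$. The implicit function theorem then yields a smooth branch $\xi_a = \Xi(\sigma)$ solving $\Phi(\Xi(\sigma),\xi_b^0,\sigma) = c$ with $\Xi(0) = \xi_a^0$, defined for $0 \le \sigma < \sigma_0$; shrinking $\sigma_0$ keeps $\Xi(\sigma) > 1$, so for every $0 < \sigma < \sigma_0$ the choice $(\Xi(\sigma),\xi_b^0) \in (1,\infty)\times(\gamma,\infty)$ produces $\mu = c\sigma = \mu_{\textrm{Hopf}}$, i.e.~a Hopf bifurcation. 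For the second assertion ($0 < c < \alpha$, $\Lambda$ compact), continuity of $g$ on the compact set $\Lambda$ gives $g_{\min} := \min_{\Lambda} g > 1$, whence $\min_{\Lambda}\Phi(\cdot,\cdot,0) = \alpha g_{\min} > \alpha > c$; uniform continuity of $\Phi$ on $\Lambda \times [0,\sigma_0]$ then lets me pick $\sigma_0 > 0$ so small that $\Phi(\xi_a,\xi_b,\sigma) > c$ throughout $\Lambda \times [0,\sigma_0)$, so the equation $\Phi = c$ has no solution and no Hopf bifurcation occurs.

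The only point requiring care is the smooth extension of $\Phi = \mu_{\textrm{Hopf}}/\sigma$ across $\sigma = 0$ and the accompanying identity $\Phi(\cdot,\cdot,0) = \frac{\partial \mu_{\textrm{Hopf}}}{\partial\sigma}(\cdot,\cdot,0)$; this follows routinely from Hadamard's lemma once $\mu_{\textrm{Hopf}}(\cdot,\cdot,0) \equiv 0$ is known, which is precisely what Lemma \ref{lem:Hopf} provides (and implicitly requires $k \ge 2$ for the implicit function theorem step, which is harmless since $k$ may be taken large). The remaining ingredients — the range of $g$, the sign of $\partial_{\xi_a}g$, and the uniform gap on a compact parameter set — are elementary, so I anticipate no serious obstacle: the analytic substance is already carried by the derivative formula \eqref{eq:dmuH} from Lemma \ref{lem:Hopf}, and this proposition is essentially its geometric reading in the $(\sigma,\mu)$-plane.
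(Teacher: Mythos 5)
Your proposal is correct and follows essentially the same route as the paper: divide the Hopf locus $\mu=\mu_{\textrm{Hopf}}(\xi_a,\xi_b,\sigma)$ by $\sigma$ along the ray $\mu=c\sigma$ and exploit the fact that $(\xi_a,\xi_b)\mapsto\frac{\partial\mu_{\textrm{Hopf}}}{\partial\sigma}(\xi_a,\xi_b,0)=\alpha\,g(\xi_a,\xi_b)$ has range exactly $(\alpha,\infty)$, so that $c=\alpha$ is the threshold for solvability. Your Hadamard factorisation and the explicit implicit-function-theorem branch $\xi_a=\Xi(\sigma)$ simply make rigorous what the paper states more briefly as ``$c=\frac{\partial\mu_{\textrm{Hopf}}}{\partial\sigma}(\xi_a,\xi_b,0)+\mathcal O(\sigma)$ can be solved for some $(\xi_a,\xi_b)$'', and your compactness argument for the case $0<c<\alpha$ matches the paper's (indeed, your inequality $\alpha g_{\min}>\alpha>c$ states the correct direction, where the paper's wording contains an apparent slip).
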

\begin{proof}
Given that $\mu=c\sigma$, we conclude by Lemma \ref{lem:Hopf} that we have a  Hopf bifurcation if and only if
\begin{align*}
\mu = c\sigma = \mu_{\textrm{Hopf}}(\xi_a,\xi_b,\sigma) = \frac{\partial \mu_{\textrm{Hopf}}}{\partial \sigma} (\xi_a,\xi_b,0)\sigma+\mathcal O(\sigma^2),
\end{align*}
or 
\begin{align}\label{eq:cond1}
c=\frac{\partial \mu_{\textrm{Hopf}}}{\partial \sigma} (\xi_a,\xi_b,0)+\mathcal O(\sigma).
\end{align}
The main insight is then that the function 
$$(\xi_a,\xi_b)\mapsto \frac{\partial \mu_{\textrm{Hopf}}}{\partial \sigma}(\xi_a,\xi_b,0), \quad \xi_a>1,\,\xi_b>\gamma,$$ is \SJ{surjective on} the interval $(\alpha,\infty)$. This follows directly from \eqref{eq:dmuH}. 
Consequently, if $c>\alpha$ then we can solve \eqref{eq:cond1} for some $(\xi_a,\xi_b)$ for all $0<\sigma\ll 1$ and there is a Hopf bifurcation. In contrast, if $c<\alpha$ and we fix $\SJJ{\Lambda}$, then the right hand side of \eqref{eq:cond1} is strictly less than $\alpha$ for all $0<\sigma\ll 1$ and all $(\xi_a,\xi_b)\in \SJJ{\Lambda}$, \SJJ{in which case} there is no Hopf bifurcation. 
\end{proof}
\begin{remark}
\KUK{In assertion \ref{ass1} where $c>\alpha$, there are Hopf bifurcations along a curve (a hyperbola for any $0<\sigma\ll 1$, see Figure \ref{fig:eq_path_Hopf} (b) below) in the $(\xi_a,\xi_b)$-plane. To see this, notice that for each $\xi_b>\gamma$ ($\xi_a>1$), we can solve \eqref{eq:cond1} for $\xi_a>1$ ($\xi_b>\gamma$, respectively) for all $0<\sigma\ll 1$.} 
\end{remark}

%\subsection{Proof of assertion \ref{nonexistence} of Theorem \ref{thm:thm0}}

\subsection{Assertion \ref{nonexistence}}
In this section, we suppose that $\mu>0$ is fixed as $\sigma \to 0$. We then consider \SJJJ{system} \eqref{eq:main0} with $\eps=\mu \sigma$ in the following form:
\begin{equation}\label{eq:fuck}
\begin{split}
r_a' &={\phi}(\sigma^{-1}\log p_b)-r_a,\\
r_b' &=1-\phi(\sigma^{-1}\log p_a)-\gamma r_b,\\
p_a'&=\mu \sigma (\xi_a r_a-p_a),\\
p_b'&=\mu \sigma \delta( \xi_b r_b-p_b).
\end{split}
\end{equation}
% 		Here we have used \eqref{eq:Hpm}. 
Now, $\mathcal M(\sigma)$ is given by 
\begin{align}\label{eq:QSSR32}
r_a={\phi}(\sigma^{-1}\log p_b),\qquad 
r_b=\gamma^{-1} (1-\phi(\sigma^{-1}\log p_a)).
\end{align}
We then consider the Lie-derivative $\mathcal L$ of the function
$$(r_a,r_b,p_a,p_b)\mapsto r_a-{\phi}(\sigma^{-1}\log p_b),$$ in the direction of the vector-field induced by \eqref{eq:fuck} at a point on $\mathcal M(\sigma)$ with $p_a>1$ and $p_b=1$. This gives 
\begin{align*}
\mathcal L=-\phi'(0)\mu \delta ( \xi_b \gamma^{-1} (1-\phi(\sigma^{-1}\log p_a))-1) = \frac14 \mu \delta(1+\mathcal O(\sigma)) .
\end{align*}
% with $\mathcal O(\sigma)$ being uniform with respect to $\mu$.
Since $\mathcal L > c > 0$ for a positive constant $c$ for all $0<\sigma \ll 1$, there can be no invariant manifold $o(1)$-close to $\mathcal M(\sigma)$ as $\sigma\to 0$ in a uniform neighbourhood of this point. 
The result therefore follows upon letting $K'$ be so that $(p_a,1)\in K'$ for some $p_a>1$. (Of course, we could also have arrived at the same result by working along $p_a=1$.)
\qed

\section{Dynamics}\label{sec:the_model}

\SJ{In this section, we combine the results in Theorem \ref{thm:thm0} with analytical and numerical investigations into} the dynamics of \SJ{system} \eqref{eq:main0}. We start with the following result. 

\begin{lemma}
\label{lem:equilibrium}
System \eqref{eq:main0} with $\sigma,\eps>0$ has a unique equilibrium $q : (r_a^\ast, r_b^\ast, \xi_a r_a^\ast, \xi_b r_b^\ast)$, where the coordinates $(r_a^\ast, r_b^\ast) \in (0,1 / \xi_a) \times (1 / \gamma \xi_b)$ are uniquely determined by the equations
\begin{equation}
\label{eq:nullclines}
%h^+\left( h^-( \xi_a r_a ; 1, \sigma^{-1} ) ; 1, \sigma^{-1} \right) = \xi_a r_a ,
%\qquad 
%h^-\left( h^+( \xi_b r_b ; 1, \sigma^{-1} ) ; 1, \sigma^{-1} \right) = \gamma \xi_b r_b .
r_a = h^+\left( \xi_b r_b ; 1, \sigma^{-1} \right) ,
\qquad 
r_b = \gamma^{-1} h^-\left( \xi_a r_a ; 1, \sigma^{-1} \right) .
\end{equation}
The location of $q$ depends on $\xi_a, \xi_b$ and $\sigma$, but not on $\eps$.
\end{lemma}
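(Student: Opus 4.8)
The plan is to compute the equilibria directly, by setting the right-hand side of \eqref{eq:main0} to zero and reducing the four resulting scalar equations to a single monotone fixed-point problem. First I would exploit the last two equations: since $\eps>0$ and $\delta>0$, the conditions $p_a'=0$ and $p_b'=0$ are equivalent to $p_a=\xi_a r_a$ and $p_b=\xi_b r_b$. This step already yields the claimed form $q=(r_a^\ast,r_b^\ast,\xi_a r_a^\ast,\xi_b r_b^\ast)$ and, crucially, it makes the $\eps$-independence transparent: the parameter $\eps$ enters only as a common positive prefactor in the $p_a$- and $p_b$-equations, so it drops out of the equilibrium conditions entirely (as does $\delta$, although only independence of $\eps$ is asserted). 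Substituting these relations into $r_a'=0$ and $r_b'=0$ produces exactly the nullcline system \eqref{eq:nullclines}.

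It then remains to show that \eqref{eq:nullclines} has a unique solution $(r_a^\ast,r_b^\ast)$ in the positive quadrant. I would eliminate one variable by writing $r_b=g(r_a):=\gamma^{-1}h^-(\xi_a r_a;1,\sigma^{-1})$ and inserting this into the first equation to obtain the scalar equation $r_a=\Phi(r_a):=h^+\!\big(\xi_b\,g(r_a);1,\sigma^{-1}\big)$. The key structural observation is monotonicity: from \eqref{eq:hill0} the function $h^-(\cdot;1,\sigma^{-1})$ is strictly decreasing and $h^+(\cdot;1,\sigma^{-1})$ strictly increasing on $[0,\infty)$, so $g$ is strictly decreasing and $\Phi$, being an increasing function of a decreasing one, is strictly decreasing with range contained in the bounded interval $(0,1)$. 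Consequently $r_a\mapsto\Phi(r_a)-r_a$ is continuous and strictly decreasing on $[0,\infty)$, is strictly positive at $r_a=0$ (since $\Phi(0)=h^+(\xi_b\gamma^{-1};1,\sigma^{-1})>0$), and is strictly negative for all sufficiently large $r_a$ (as $\Phi$ stays bounded in $(0,1)$ while $r_a\to\infty$). The intermediate value theorem then gives a root, and strict monotonicity guarantees it is unique; setting $r_b^\ast=g(r_a^\ast)$ yields the unique equilibrium. The dependence of $(r_a^\ast,r_b^\ast)$ on $\xi_a,\xi_b$ and $\sigma$ but not $\eps$ is manifest, since these are the only parameters appearing in \eqref{eq:nullclines}.

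Finally, the containment of $(r_a^\ast,r_b^\ast)$ in the stated region would be obtained by reading off the ranges of the monotone maps and comparing them against the switching threshold $\theta=1$ (equivalently $p_a=\xi_a r_a=1$ and $p_b=\xi_b r_b=1$) at the endpoints of their domains. This is the only part of the argument that requires a little bookkeeping, but it is routine once the monotonicity above is established.

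I do not anticipate a genuine obstacle here: the entire argument rests on the monotonicity of the Hill functions $h^\pm$, and the only mild care needed is in confirming the a priori bounds that confine $q$ to the physically relevant region. One could alternatively apply the implicit function theorem or a degree-theoretic argument, but the monotone scalar reduction is cleaner and delivers global uniqueness rather than merely a local statement.
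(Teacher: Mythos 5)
Your proposal is correct and follows essentially the same route as the paper: reduce via $p_a=\xi_a r_a$, $p_b=\xi_b r_b$, then solve the scalar equation $r_a=\Phi(r_a)$ (the paper works with $F(r_a):=r_a-\Phi(r_a)$, noting $F(0)<0$, $F\to\infty$ and $F'>0$) by monotonicity of the Hill functions and the intermediate value theorem. The one place where you are exactly as terse as the paper is the final containment: the range argument alone only yields $(r_a^\ast,r_b^\ast)\in(0,1)\times(0,\gamma^{-1})$, so the sharper bounds stated in the lemma would indeed require the extra bookkeeping you allude to.
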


\begin{proof}
Equilibria of system \eqref{eq:main0} with $\eps > 0$ have to satisfy $p_a = \xi_a r_a$ and $p_b = \xi_b r_b$. Substituting these expressions into $r_a' = 0$ and $r_b' = 0$ yields the conditions in \eqref{eq:nullclines}. The existence of a unique equilibrium point
$q : (r_a^\ast, r_b^\ast, \xi_a r_a^\ast, \xi_b r_b^\ast)$ %, for which $r_a^\ast$ and $r_b^\ast$ satisfy the conditions in \eqref{eq:eq_cond}, 
follows from the monotonicity properties of the Hill functions $h^\pm$, see Figure \ref{fig:Hill_functions}. Indeed, define
\begin{equation}
\label{eq:eq_cond}
F(r_a) := r_a - h^+( \xi_b \gamma^{-1}  h^-(\xi_a r_a ; 1,\sigma^{-1}) ; 1,\sigma^{-1} ).
\end{equation}
It is clear that \eqref{eq:nullclines} is equivalent \SJJ{to} the equations $$F(r_a)=0,\qquad r_b =\gamma^{-1} h^-\left( \xi_a r_a ; 1, \sigma^{-1} \right).$$ Then since $$F(0) = - h^+( \xi_b \gamma^{-1}  \tfrac12 ; 1,\sigma^{-1} )< 0, \quad \lim_{r_a \to \infty} F(r_a) = \infty\quad \mbox{and}\quad (\partial F / \partial r_a) (r_a) > 0,$$ for all $\sigma>0$, the intermediate value theorem implies the existence of a unique solution $r_a = r_a^\ast > 0, r_b=r_b^\ast$. %This implies the existence of a unique equilibrium for system \eqref{eq:reduced_protein_only_limiting}.
% 			which ensure that the curves defined by the equations in \eqref{eq:eq_cond} have a unique intersection at $(r_a^\ast, r_b^\ast)$ in the $(r_a, r_b)$-plane. % must be unique; this follows after applying the inverse function theorem to the function to the function $F = (F_1, F_2)^\transpose$ defined via
%		\[
%		F(r_a, r_b) := 
%		\begin{pmatrix}
%			r_a - h^+\left( \xi_b r_b ; 1, \sigma^{-1} \right) \\
%			r_b - \gamma^{-1} h^-\left( \xi_a r_a ; 1, \sigma^{-1} \right)
%		\end{pmatrix} ,
%		\]
%		for which
%		\[
%		\det
%		\frac{\partial (F_1, F_2)}{\partial (r_a, r_b)} = 1 + \frac{\xi_a \xi_b}{\gamma} h^+(\xi_b r_b ; 1, \sigma^{-1}) h^-(\xi_b r_b ; 1, \sigma^{-1}) h^+(\xi_a r_a ; 1, \sigma^{-1}) h^-(\xi_a r_a ; 1, \sigma^{-1} ) > 0 ,
%		\]
%		for all $(r_a, r_b) \in (0,\infty)^2$. 
Finally, the fact that $(r_a^\ast, r_b^\ast) \in (0, 1 / \xi_a) \times (0, 1/ \gamma \xi_b)$ follows from the expressions in \eqref{eq:nullclines} and the fact that range of the Hill functions $h^\pm$ is $(0,1)$.
\end{proof}

Consider the $(p_a,p_b)$-plane and let $I,\ldots,IV$ denote the four quadrants that are obtained from the partition induced by $\widetilde \Sigma = \widetilde \Sigma_a \cup \widetilde \Sigma_b$, where
\begin{equation}
\label{eq:switching_manifold_reduced}
\widetilde \Sigma_a := \left\{ (1,p_b) : p_b \geq 0 \right\} , \quad
\widetilde \Sigma_b := \left\{ (p_a,1) : p_a \geq 0 \right\} ,
\end{equation} 
see also Figure \ref{fig:eq_path} (a). \SJJ{The fact that QSSR is valid uniformly in $\sigma$ in regions of the phase space that are bounded away from $\Sigma$ (recall Remark \ref{rem:nonexistence}), motivates us to consider the QSSR system} \eqref{eq:M0red} in the singular limit $\sigma\to 0$. \SJJ{This is a} %becomes the following 
piecewise-linear \SJ{(PWL)} system \SJJ{of the form}
\begin{equation}
\label{eq:pwl_limit_reduced}
\begin{split}
I :&
\begin{cases}
\dot p_a &= - p_a , \\
\dot p_b &= \delta \left( \frac{\xi_b}{\gamma} - p_b \right) ,
\end{cases}
\qquad \quad
II :
\begin{cases}
\dot p_a &= - p_a , \\
\dot p_b &= - \delta p_b ,
\end{cases}
\\
III :&
\begin{cases}
\dot p_a &= \xi_a - p_a , \\
\dot p_b &= - \delta p_b ,
\end{cases}
\qquad \qquad \quad
IV :
\begin{cases}
\dot p_a &= \xi_a - p_a , \\
\dot p_b &= \delta \left( \frac{\xi_b}{\gamma} - p_b \right) ,
\end{cases}
\end{split}
\end{equation}
where the overdot denotes differentiation with respect to slow time $\tau = \eps t$ and $\widetilde \Sigma$ is the switching manifold. 
% We can view the reduced problem defined by these four systems as the dual singular limit $\eps\to 0$ and $\sigma\to 0$ of \eqref{eq:main0}. It is a planar PWL system with switching manifold $\widetilde \Sigma = \widetilde \Sigma_a \cup \widetilde \Sigma_b$, where
% 	\begin{equation}
% 		\label{eq:switching_manifold_reduced}
% 		\widetilde \Sigma_a := \left\{ (1,p_b) : p_b \geq 0 \right\} , \quad
% 		\widetilde \Sigma_b := \left\{ (p_a,1) : p_a \geq 0 \right\} .
% 	\end{equation}
As all four systems in \eqref{eq:pwl_limit_reduced} are linear and decoupled, they can easily be solved directly. We omit the calculations here because they will not be necessary for our purposes, however, we summarise a number of important qualitative features of the dynamics in the following result. The proof can be found in Appendix~\ref{app:prop} below.
% 	e can view the reduced problem defined by these four systems as a planar PWL system with switching manifold $\widetilde \Sigma = \widetilde \Sigma_a \cup \widetilde \Sigma_b$, where
% 	\begin{equation}
% 		\label{eq:switching_manifold_reduced}
% 		\widetilde \Sigma_a := \left\{ (1,p_b) : p_b \geq 0 \right\} , \qquad
% 		\widetilde \Sigma_b := \left\{ (p_a,1) : p_a \geq 0 \right\} .
% 	\end{equation}
% 	All four systems in \eqref{eq:pwl_limit_reduced} are linear and decoupled, and can easily be solved directly. We omit the calculations here because they will not be necessary for our purposes, however, we summarise a number of important qualitative features of the dynamics in the following result, see also Figure \ref{fig:eq_path}.
% 	
% 	\medskip

\begin{proposition}
\label{prop:PWS_dynamics}
%\note{Add inward spiralling argument/statement}
Consider the PWL protein-only system defined by the systems in \eqref{eq:pwl_limit_reduced} on $\R^2 \setminus \widetilde \Sigma$. The following assertions are true:
\begin{enumerate}
\item[(i)] For all $(\xi_a, \xi_b) \in \R \cup [0,1)$ there is a unique equilibrium $q_{III} : (0,\xi_b / \gamma) \in III$, which is a stable node.
\item[(ii)] For all $(\xi_a, \xi_b) \in [0,1) \cup (\gamma,\infty)$ there is a unique equilibrium $q_{II} : (\xi_a,\xi_b / \gamma) \in II$, which is a stable node.
\item[(iii)] There are no equilibria on $\R^2 \setminus \widetilde \Sigma$ when $(\xi_a, \xi_b) \in (1,\infty) \cup (\gamma,\infty)$.
\item[(iv)] For fixed $\xi_a \in [0,1)$, 
\begin{align*}
\lim_{\xi_b\to \gamma^-} q_{III} = (0,1)\in \widetilde \Sigma_b.
\end{align*}
% a degenerate boundary-node bifurcation.
\item[(v)] For fixed $\xi_a \in [0,1)$, 
\begin{align*}
\lim_{\xi_b\to \gamma^+} q_{II} = (\xi_a,1)\in \widetilde \Sigma_b.
\end{align*}
% 			the limit $\xi_b \to \gamma^+$ corresponds to a degenerate boundary-node bifurcation in which $q_{IV}$ converges to $(\xi_a,1) \in \widetilde \Sigma_b$.
\item[(vi)] For fixed $\xi_b \in [0,\gamma)$, 
\begin{align*}
\lim_{\xi_a\to 1^-} q_{II} = (1,\gamma^{-1} \xi_b) \in \widetilde \Sigma_a.
\end{align*}
% % 			the limit $\xi_a \to 1^-$ corresponds to a degenerate boundary-node bifurcation in which $q_{IV}$ converges to $(1,\xi_b / \gamma) \in \widetilde \Sigma_a$.
\item[(vii)] For fixed $\xi_a > 1$,
\begin{align*}
\lim_{\xi_b\to \gamma^-} q_{III} = (0,1)\in \widetilde \Sigma_b .
\end{align*}

% 			\begin{align*}
% 			\lim_{\xi_b\to \gamma^-1} $\xi_b \to \gamma^-$ corresponds to a degenerate boundary-node bifurcation in which $q_I$ converges to $(0,1) \in \widetilde \Sigma_b$.
\item[(viii)] Suppose that $\xi_a>1$ and $\xi_b>\gamma$ and consider $(p_a,p_b)(0)\ne (1,1)$. Then $(p_a,p_b)(\SJJ{\tau})\rightarrow (1,1)$ for $\SJJ{\tau} \rightarrow T_{\rm{max}}$, where $T_{\rm{max}}\in (0,\infty]$ is the maximal time of existence. %\note{Add a statement about the return map for $\xi_a > 1$ and $\xi_b > \gamma$ in order to describes the PWS spiralling towards $\Sigma_a \cap \Sigma_b$. The argument will need to assume `crossing' dynamics...}
\end{enumerate}
\end{proposition}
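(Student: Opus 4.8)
The overall plan is to use that each of the four subsystems in \eqref{eq:pwl_limit_reduced} is linear, autonomous and decoupled --- hence explicitly solvable --- and to treat the corner $(1,1)=\widetilde\Sigma_a\cap\widetilde\Sigma_b$ as the organising centre of the dynamics in the oscillatory parameter regime.

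For assertions (i)--(iii) I would solve $\dot p_a=\dot p_b=0$ in each quadrant to obtain the unique ``virtual'' equilibrium of the corresponding linear field, namely $(0,\xi_b/\gamma)$, $(0,0)$, $(\xi_a,0)$ and $(\xi_a,\xi_b/\gamma)$. Since the Jacobian equals $\operatorname{diag}(-1,-\delta)$ in every quadrant, each such point has eigenvalues $-1,-\delta<0$ and is a stable node. A virtual equilibrium is a genuine equilibrium of the PWL system exactly when it lies in the interior of its own quadrant; substituting its coordinates into the inequalities defining that quadrant yields the admissibility conditions on $(\xi_a,\xi_b)$ recorded in (i)--(ii), and shows that $(0,0)$ and $(\xi_a,0)$ are never admissible, so that in the regime $\xi_a>1$, $\xi_b>\gamma$ none of the four is admissible --- this is (iii). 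Assertions (iv)--(vii) then follow immediately by taking the one-sided limits $\xi_a\to 1$ or $\xi_b\to\gamma$ in the explicit equilibrium coordinates and noting that the equilibrium collides with $\widetilde\Sigma_a$ or $\widetilde\Sigma_b$.

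The substantive claim is (viii). Fixing $\xi_a>1$, $\xi_b>\gamma$, I would first read off the flow directions and verify that at every crossing the field is transverse to $\widetilde\Sigma$ with matching orientation on both sides, so that there is no sliding and every trajectory crosses the switching lines in a fixed cyclic order encircling $(1,1)$. Using the explicit linear solutions I would then compose the four transition maps between consecutive switching half-lines into a scalar first-return map $R:(0,1)\to(0,1)$ on the section $\widetilde\Sigma_a\cap\{0<p_b<1\}$, parametrised by the crossing coordinate $\beta$; two of the transitions reverse orientation and two preserve it, so $R$ is monotone increasing, and the corner is the fixed point $R(1)=1$. Convergence to $(1,1)$ then reduces to showing $R(\beta)>\beta$ for all $\beta\in(0,1)$: combined with $R((0,1))\subset(0,1)$ and monotonicity this makes $R^n(\beta)$ increase to the unique fixed point $\beta=1$, and translating back through the continuous, transverse crossing maps --- with transition times that shrink to zero as the corner is approached --- gives $(p_a,p_b)(\tau)\to(1,1)$ as $\tau\to T_{\rm{max}}$, with $T_{\rm{max}}\in(0,\infty]$ according to whether the accumulated times converge.

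The hard part is $R(\beta)>\beta$, and here a naive linearisation fails: composing the four transition derivatives at the corner gives exactly $R'(1)=1$, so $\beta=1$ is a degenerate (parabolic) fixed point whose stability the linear part cannot decide. This reflects that the leading-order flow is center-like (compare the Hamiltonian structure of the scaled system in Remark~\ref{rem:Hamiltonian}), the genuine damping being supplied by the divergence $-(1+\delta)<0$ of every subfield --- the same dissipation behind Lemma~\ref{lemma:qssr}. Rather than fight the degeneracy, I would argue globally. The key structural observation is that, because each Hill term depends on a single variable, the component of the field normal to $\widetilde\Sigma$ is continuous across $\widetilde\Sigma$ (only the tangential component jumps); hence the field carries no singular divergence and Liouville's formula gives strict area contraction by the factor $\me^{-(1+\delta)T}$ over any loop of duration $T$. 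No closed curve can therefore be invariant, so $R$ has no fixed point in $(0,1)$; since $R$ is continuous, increasing, and satisfies $R(\beta)>\beta$ near $\beta=0$ (because $R(0^+)>0$), the absence of interior fixed points forces $R(\beta)>\beta$ throughout $(0,1)$, as required. I expect the main obstacle to be precisely the rigorous handling of this area-contraction argument across the non-smooth set $\widetilde\Sigma$ --- justifying the absence of a singular divergence from the continuity of the normal velocity --- together with the careful treatment of the corner $(1,1)$, which does not belong to the domain $\R^2\setminus\widetilde\Sigma$ on which the flow is defined.
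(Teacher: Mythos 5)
Your proposal is correct and, for the most part, follows the same route as the paper: assertions (i)--(vii) by locating the virtual equilibria of the four linear subfields and checking admissibility, and (viii) via a return map around the corner composed from the explicit linear transition maps, with interior fixed points excluded by the constant divergence $-(1+\delta)<0$. Your justification of the divergence argument --- continuity of the normal component of the field across $\widetilde\Sigma$, so that no singular divergence is carried on the switching lines and the boundary terms cancel when Green's theorem is applied piecewise --- is precisely why the paper's appeal to Dulac's theorem for crossing-only PWS systems is legitimate, so the two arguments coincide in substance. The one genuine difference is how the parabolic fixed point at the corner is resolved. The paper computes, from the composition \eqref{eq:poincareP}, that $P(1)=1$, $P'(1)=1$ and $P''(1)<0$ as in \eqref{eq:Pexpr}, which fixes the sign of $P(p_b)-p_b$ near the corner; the absence of interior fixed points then propagates that sign along the whole section. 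You instead fix the sign at the far end of your section via $R(0^+)>0$. This is a legitimate and cheaper alternative that avoids the second-derivative computation entirely, but it shifts the burden onto the uniform lower bound $R(0^+)>0$, which is the only place where the sign of $R-\mathrm{id}$ is actually pinned down and should therefore be made explicit: the first transition map sends $(1,\beta)$ to the point on $\{p_b=1\}$ with $p_a=\bigl((\xi_b-\gamma)/(\xi_b-\gamma\beta)\bigr)^{1/\delta}\geq\bigl((\xi_b-\gamma)/\xi_b\bigr)^{1/\delta}>0$, and the remaining three maps are continuous up to this limit and keep the image in a compact subset of $(0,1)$, giving $R(\beta)\geq \xi_a^{-\delta}>0$ uniformly. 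With that supplied, your argument is complete; the remaining ingredients (monotonicity of $R$ from non-crossing of trajectories, convergence of the iterates to the boundary fixed point, and $T_{\max}\in(0,\infty]$ according to the summability of the transition times) agree with the paper's proof.
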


\begin{figure}[t!]
\centering
\subfigure[]{\includegraphics[scale=0.5]{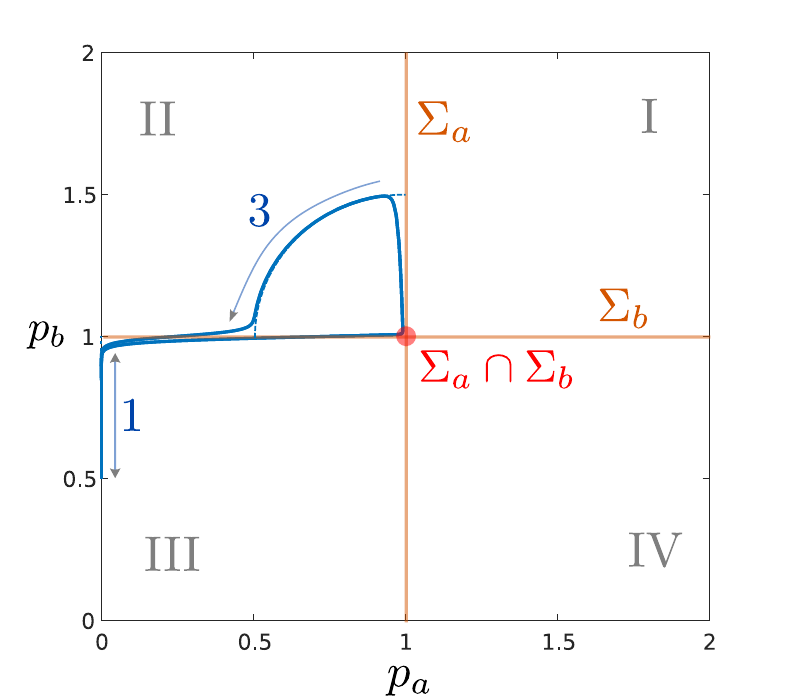} \ \ } \ \ 
\subfigure[]{\includegraphics[scale=0.5]{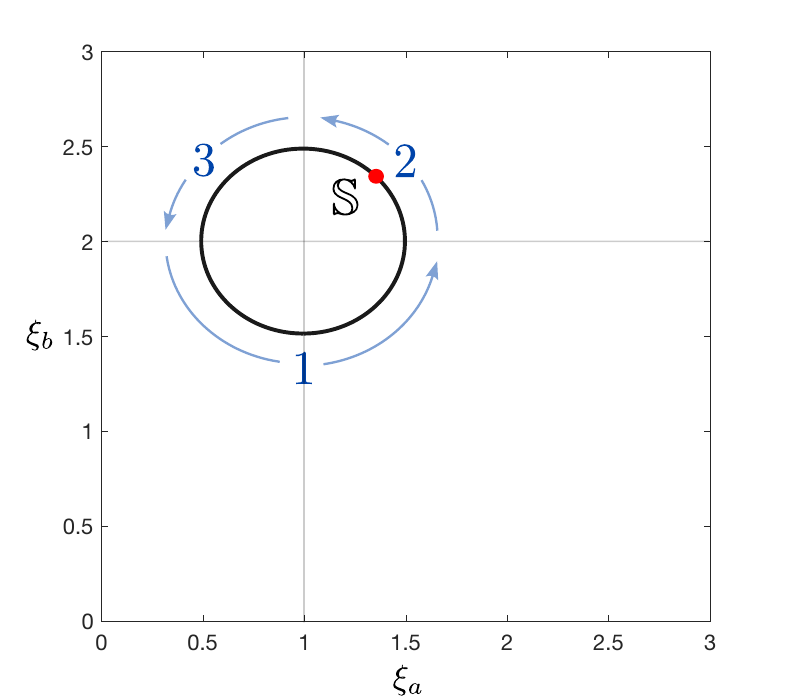} \ \ }
\caption{In (a): Projections of the switching manifolds $\Sigma_a$, $\Sigma_b$ and their intersection $\Sigma_a \cap \Sigma_b$ in the $(p_a, p_b)$-plane are shown in shaded orange and red respectively. A numerical continuation, in which the $(p_a,p_b)$-coordinates of the unique equilibrium $q$ of system \eqref{eq:main0} for parameter values $\eps = 5 \times 10^{-5}$, $\sigma = 10^{-2}$, $\gamma = 2$, $\delta = 3$, and $(\xi_a,\xi_b)$-values in a circle $\mathbb S$ of radius $1/2$ centered at $(1, \gamma) = (1, 2)$, is overlaid in blue. The dashed blue line in regions II and III show $q_{II}$ (barely visible along $p_a=0$) and $q_{III}$ from Proposition \ref{prop:PWS_dynamics}. In (b): The circle $\mathbb S$ in $(\xi_a,\xi_b)$-space which corresponds to the set of points on the closed blue loop in the left panel figure is shown in black. The labels ``$1$'' and ``$3$'' correspond to the parts of the loop in the left panel figure with the corresponding label. The label ``$2$" does not appear in (a) because it is `hidden' near $\Sigma_a\cap \Sigma_b$ (it is described in the scaled coordinates of $K_2$). \rsp{The red dot on $\mathbb S$ in (b) indicates the parameter values used in Figure \ref{fig:lcnolc}.}}
\label{fig:eq_path}
\end{figure}

\begin{remark}
The phenomenon in (iv)-(vii), where $q_{III}$ and $q_{II}$ collide with the different branches of $\Sigma$, is known as boundary-equilibrium bifurcation in PWS systems. We refer to \cite{Filippov1960,Hogan2016,Kuznetsov2003} for definitions and classifications of `generic' boundary-equilibria bifurcations in planar Filippov systems, and to \cite{Jelbart2021,Jelbart2020d} for detailed analyses and a classification of `generic' singularly perturbed boundary equilibria which occur in smooth planar systems that limit to a PWS system \SJJ{(as is the case for system \eqref{eq:main0} when $\sigma \to 0$)}. We also refer to \cite{Kristiansen2019d}, where a singularly perturbed boundary-node bifurcation is shown to play an important role in describing the rapid onset of large amplitude oscillations in a model for substrate-depletion oscillation. The cases in (iv)-(vii) in Proposition \ref{prop:PWS_dynamics} are examples of degenerate boundary-node bifurcations. The degeneracy stems from the fact that $q_{III}$ and $q_{II}$ have an associated eigenvector which intersects $\widetilde \Sigma$ non-transversally at the collision.
\end{remark}
% 	\begin{proof}
% 		We delay th
% 		
% % 		\note{Add proof of assertion (viii)}
% 	\end{proof}
% 	

Now consider a circle $\mathbb S$ in the $(\xi_a,\xi_b)$ parameter plane of radius $< \min\{1, \gamma\}$ centered around $(1,\gamma)$, see Figure \ref{fig:eq_path} (b). We consider $\gamma=2$, $\delta=3$ and a radius equal to $\frac12$ to be specific. Then on the lower left part of $\mathbb S$, where $\xi_a<1$ and $\xi_b<\gamma=2$, $q_{III}$ of Proposition \ref{prop:PWS_dynamics} traces out a curve in the $(p_a,p_b)$-plane which is shown in dashed blue in Figure \ref{fig:eq_path} (a) (it is contained within $p_a=0$ and \SJJ{is barely visible because it aligns} with the blue curve). Similarly on the upper left part of $\mathbb S$, where $\xi_a<1$ and $\xi_b>\gamma=2$, $q_{II}$ of Proposition \ref{prop:PWS_dynamics} traces out a curve (also a quarter circle and shown in dashed blue in Figure \ref{fig:eq_path} (a)). We then perform numerical computations for $\eps = 5\times 10^{-5}$, $\sigma=10^{-2}$ (and therefore $\mu=5\times 10^{-3}$ cf.~\eqref{eq:mu_scaling}), to follow the unique equilibrium $q$ of \SJJ{system} \eqref{eq:main0} as $(\xi_a,\xi_b)$ transverses around $\mathbb S$. We describe the results as \SJJ{below}.

We start with a $(\xi_a, \xi_b)$ value within the lower left quadrant of $\mathbb S$, in which case assertion (i) of Proposition \ref{prop:PWS_dynamics} implies the existence of a stable node $q_{III}$. As we begin to move counterclockwise around $\mathbb S$, $q$ initially moves down the vertical axis until it reaches a minimum for parameter values at the `south pole' of $\mathbb S$, after which it begins to move up the $p_b$-axis. This accounts for part `1' of the loop depicted in Figure \ref{fig:eq_path} (b) and is in agreement with the corresponding path of $q_{III}$ (in dashed blue) of Proposition \ref{prop:PWS_dynamics}. The degenerate boundary-node bifurcation described in assertion (iv) of Proposition \ref{prop:PWS_dynamics} occurs as $\xi_b / \gamma$ approaches $1$ from below and $q$ collides with $\widetilde \Sigma_b$, i.e.~as we approach the right-most point of $\mathbb S$ from below. Due to assertion (iii) of Proposition \ref{prop:PWS_dynamics}, there are no equilibria of \SJJ{system} \eqref{eq:pwl_limit_reduced} on $\R^2 \setminus \widetilde \Sigma$ as we continue along the top right quadrant of $\mathbb S$; this is part `2' of the loop. As we continue into the top left quadrant of $\mathbb S$, the equilibrium $q \in IV$ `emerges' from $\widetilde \Sigma_a$ via the degenerate boundary-node bifurcation described in assertion (vi) of  Proposition \ref{prop:PWS_dynamics}. The equilibrium $q$ subsequently follows a path determined by $q_{II}$ (in dashed blue) until it collides with $\widetilde \Sigma_b$ from above as the left-most point of $\mathbb S$ is approached from above (corresponding to the degenerate boundary-node described in assertion (v)); this is part `3' of the loop. Finally, in accordance with assertion (vii), $q$ emerges from $\widetilde \Sigma_b$ via another degenerate boundary-node bifurcation, and moves into $III$ as we move back into the lower left quadrant of $\mathbb S$, thereby completing the loop. Numerically we find that the equilibrium remains stable throughout (and the global attractor).

These results are consistent with Theorem \ref{thm:thm0}. In fact, Fenichel's theory guarantees that the qualitative dynamics of the original $4$-dimensional model \eqref{eq:main0} for all $0<\eps\ll 1$, $0<\sigma\ll 1$, are captured by \eqref{eq:pwl_limit_reduced} and Proposition \ref{prop:PWS_dynamics} as long as we stay uniformly away from $\Sigma$; notice, cf. Remark \ref{rem:nonexistence}, that this does not rely on \eqref{eq:mu_scaling}. 
% 
% This is consistent with the findings in Figure \ref{fig:eq_path}, however, the preceding discussion also shows that important qualitative features of the dynamics occur close to $\Sigma$. %For example, we know from Lemma \ref{lem:equilibrium} that the original system \eqref{eq:main0} has a unique equilibrium when $\eps, \sigma > 0$. 
However, when $\xi_a > 1$ and $\xi_b > \gamma$, assertion (iii) in Proposition \ref{prop:PWS_dynamics} states that the limiting PWL system \eqref{eq:pwl_limit_reduced} does not have an equilibrium on $\R_+^2 \setminus \widetilde \Sigma$. To track $q$ within this parameter regime we can work on the local version of $\mathcal M_\mu(\sigma)$ in the chart $K_2$, recall \eqref{eq:K2}, Section \ref{sec:kappa} and \eqref{eq:q2musigma} specifically. The coordinates in $K_2$ provide a zoom near $\Sigma_a\cap \Sigma_b$; this is consistent with Figure \ref{fig:eq_path} (part `2' of $\mathbb S$) where $q$ remains close to $\Sigma_a\cap \Sigma_b$. We also expect that it is possible to track $q$ on the remaining parts of $\mathbb S$, near $\xi_a\approx 1$ and $\xi_b\approx \gamma$, by working on the local versions of $\mathcal M_\mu(\sigma)$ in the charts $K_{ij}$, $i\in \{1,3,4,5\}$. This will involve further parameter blow-up, see e.g.~\cite{Baumgartner2024,Jelbart2020d,Kristiansen2019d}.

%The numerical findings in Figure \ref{fig:eq_path} suggest that the reason for this is that the equilibrium is close to $\Sigma$, i.e.~that it gets `lost' in $\Sigma$. %Additional numerical investigations suggest (in agreement with Theorem \ref{thm:thm0} item \ref{kappa}) that the qualitative dynamics near $\Sigma$ depends upon the relative size of the small parameters $\sigma$ and $\eps$. 

In Figure \ref{fig:eq_path_Hopf}, we repeat the numerical computations with the same data that was used to generate Figure \ref{fig:eq_path} except that $\eps$ has increased from $5 \times 10^{-5}$ to $5 \times 10^{-3}$ (so that $\mu=\frac12$ in Figure \ref{fig:eq_path_Hopf}, cf. \eqref{eq:mu_scaling}). Following this increase in $\eps$, we now identify two supercritical Hopf bifurcations using MatCont \cite{MATCONT} as we traverse the same circle in the $(\xi_a, \xi_b)$-plane. Figure \ref{fig:eq_path_Hopf} (b) shows the output of a $2$-parameter continuation in MatCont \cite{MATCONT} of the Hopf curve (note that the two bifurcations points in Figure \ref{fig:eq_path_Hopf} (a) correspond to two distinct points on this curve). Stable limit cycles are observed numerically in the region bounded above this curve. %Comparing with Figure \ref{fig:eq_path} (b) shows that for these parameter values, the limit cycles appear and terminate in a region of phase space which is close to the intersection $\Sigma_a \cap \Sigma_b$ as one traverses the upper-right part of $\mathbb S$ (labelled '2' in the figure) in parameter space. 

\begin{figure}[t!]
\centering
\subfigure[]{\includegraphics[scale=0.5]{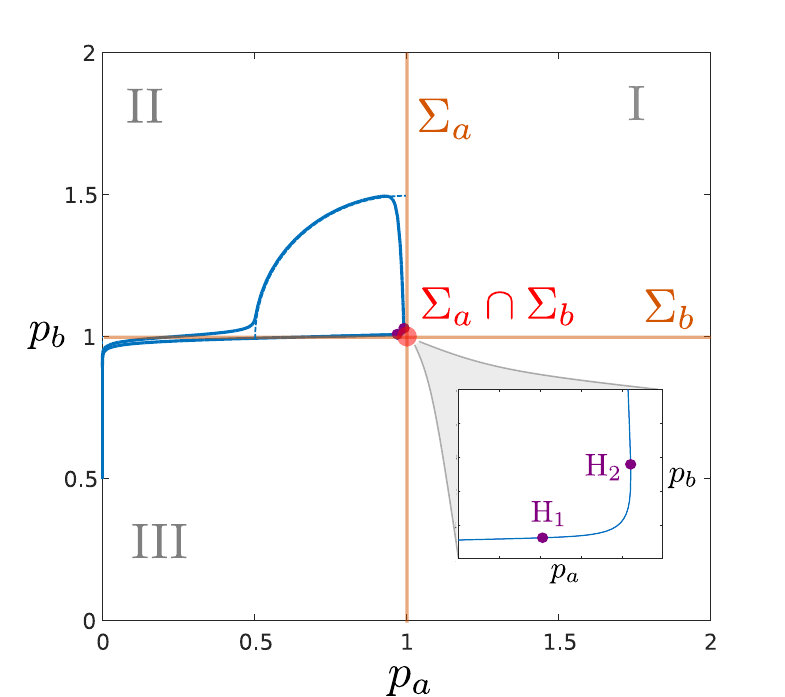} \ \ }
\subfigure[]{\includegraphics[scale=0.64]{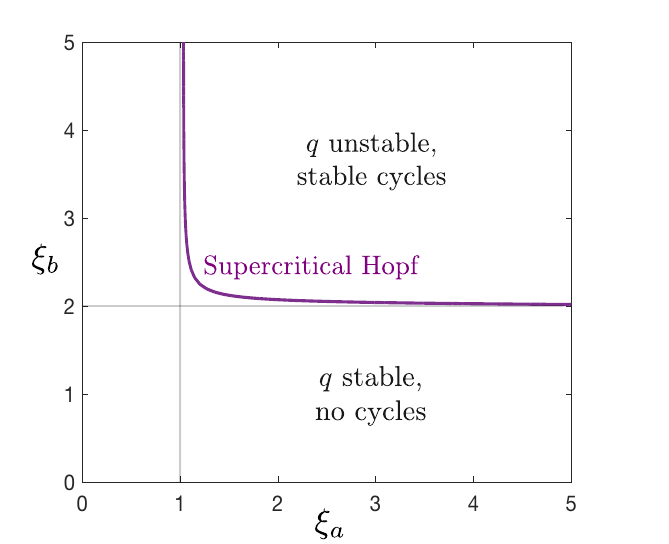}}
\caption{In (a): The diagram obtained by performing the same numerical computations as in Figure \ref{fig:eq_path}, except with $\eps = 5 \times 10^{-3}$ instead of $5 \times 10^{-5}$ (so that $\mu=\frac12$). This time we find two supercritical Hopf bifurcations using MatCont \cite{MATCONT} close to the (projection of the) intersection $\Sigma_a \cap \Sigma_b$; these are labelled H$_1$ and H$_2$ and indicated by purple discs in the zoom/inset. The first Lyapunov coefficients associated with the Hopf bifurcations H$_1$ and H$_2$ were calculated numerically in MatCont to be $l_1 \approx -0.07676$ and $l_1 \approx -0.2384$ respectively, indicating that both bifurcations are supercritical. In (b): The Hopf curve in $(\xi_a, \xi_b)$-space, shown in purple, appears to asymptote along $\xi_a = 1$ and $\xi_b = \gamma = 2$. The Hopf bifurcations H$_1$ and H$_2$ correspond to intersections of this curve with the circle $\mathbb S$ shown in Figure \ref{fig:eq_path} (right).}
\label{fig:eq_path_Hopf}
\end{figure}

The Hopf bifurcations are in agreement with Theorem \ref{thm:thm0}, see also Section \ref{sec:kappa} and Lemma \ref{lem:Hopf} specifically. In fact, with $\gamma=2$ and $\delta=3$ we obtain that 
\begin{align*}
\alpha = \frac{8}{9},
\end{align*}
recall \eqref{eq:alpha}. Therefore with the values in Figure \ref{fig:eq_path}, we have 
\begin{align*}
\mu = 5\times 10^{-3} < \alpha \sigma = \frac{8}{9}\times 10^{-2},
\end{align*}
and the absence of a Hopf bifurcation is therefore consistent with assertion \ref{kappa} of Theorem \ref{thm:thm0}.  In contrast, with the values in Figure \ref{fig:eq_path}, we have
\begin{align*}
\mu = \frac12 > \alpha \sigma = \frac{8}{9}\times 10^{-2},
\end{align*}
and the existence of a Hopf bifurcation is therefore consistent with assertion \ref{kappa} of Theorem \ref{thm:thm0}. \rsp{Recall also Figure \ref{fig:lcnolc}, with (a) and (b) corresponding to the first case (where $\mu$ is too small) with convergence to a steady state and (c) and (d) corresponding to the second case with a visible limit cycle. Notice here that the values of $(\xi_a,\xi_b)=(1.3536,2.3536)$ used in Figure \ref{fig:lcnolc} correspond to the region $2$ of the circle $\mathbb S$ in Figure \ref{fig:eq_path} (b) (it is indicated by a red dot). }
\rsp{More broadly}, the observed existence of stable limit cycles in the region bounded above the curve in Figure \ref{fig:eq_path_Hopf} (b) is consistent with assertion (viii) of Proposition \ref{prop:PWS_dynamics} and Theorem \ref{thm:thm0}. Indeed, by assertion \ref{existence} of Theorem \ref{thm:thm0} we can reduce the system to a $2$-dimensional locally invariant manifold $\mathcal M_{\mu}(\sigma)$ and for the values in the region bounded above the curve in Figure \ref{fig:eq_path_Hopf} (b) the equilibrium is an unstable focus, see Section \ref{sec:kappa}. We can therefore obtain existence of a  limit cycle \SJJ{using the} Poincar\'e-Bendixson \SJJ{theorem}. In fact, assertion (viii) of Proposition \ref{prop:PWS_dynamics} shows that the limit cycles are $o(1)$-close to $(p_a,p_b)=(1,1)$ in the $(p_a,p_b)$-plane with respect to $\mu,\sigma\to 0$. 

\rsp{Finally, in Figure \ref{fig:nonexistence} we show the result of numerical computations of \eqref{eq:main0} for the parameter values in \eqref{eq:para} but with $\sigma=2\times 10^{-3}$ and three different values of $\eps$. In pink: $\eps =2 \times 10^{-2}$, in blue: $\eps=5\times 10^{-3}$ and finally in cyan: $\eps = 8\times 10^{-4}$, corresponding to $\mu=10$, $\mu=2.5$, $\mu=0.4$, respectively. The initial condition is again $(r_a,r_b,p_a,p_b)(0)=(0,0,0.5,0.5)$ for all values of $\eps$. In particular, in Figure \ref{fig:nonexistence} (a) we show a projection onto the $(p_b,r_a)$-plane. This diagram illustrates Assertion \ref{nonexistence} of Theorem \ref{thm:thm0}: the distance from the QSSR approximation is measured by $\mu$. For $\mu=10$ (cyan) there is a large separation from the QSSR approximation. For smaller values of $\mu$ (like $\mu=0.4$ in cyan) the curve follows the QSSR approximation more closely. In Figure \ref{fig:nonexistence}, we show the projection on to the $(p_a,p_b)$-plane. Notice that there is no visible separation between three different curves before the first intersection of $p_b=1$, but beyond this intersection, due to the separation of the QSSR approximation near $p_b=1$ for large values of $\mu$, the trajectories start to deviate, also in the $(p_a,p_b)$-plane. }
\begin{figure}[t!]
\centering
\subfigure[]{\includegraphics[scale=0.5]{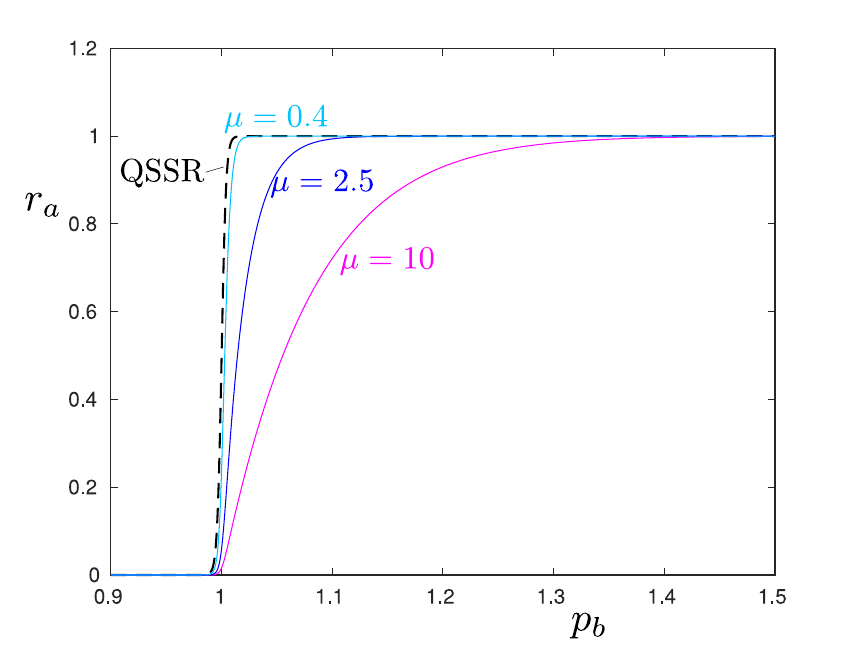}  }
\subfigure[]{\includegraphics[scale=0.5]{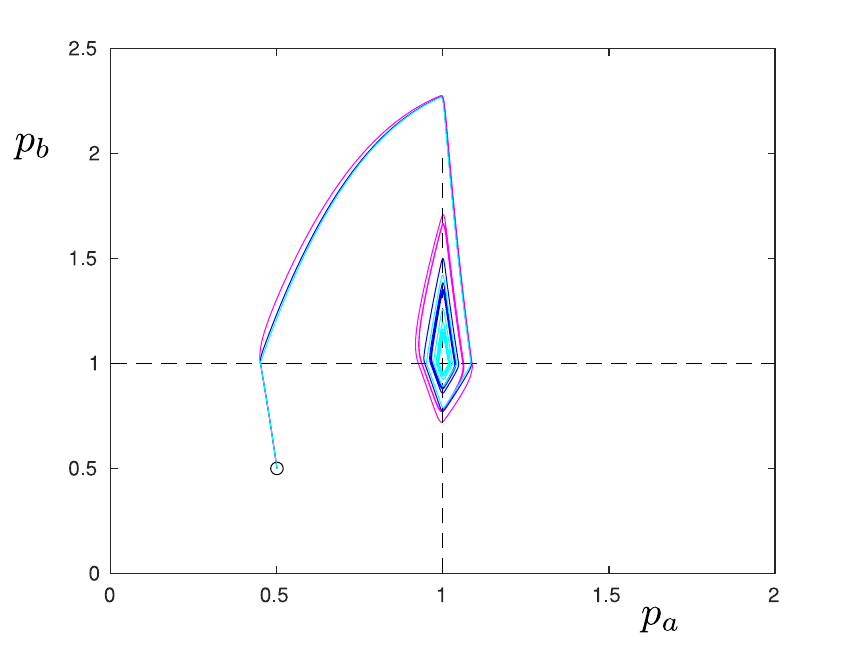} }	
\caption{\rsp{In (a): Projection onto the $(p_b,r_a)$-plane for three different values of $\mu$. The dotted line indicates the QSSR approximation. In (b): Projection onto the $(p_a,p_b)$ for the same values of $\mu$ as in (a).  }}
\label{fig:nonexistence}
\end{figure}

\section{Discussion and outlook}
\label{sec:summary_and_outlook}

The analysis and results in this article show, in the context of the activator-inhibitor model \eqref{eq:main0}, that the asymptotic behaviour of GRN models in general form \eqref{eq:grn_network} as $(\eps, \sigma) \to (0,0)$ may depend in an important way on \textit{how} this limit is taken. Theorem \ref{thm:thm0} assertion \ref{existence} asserts the existence of a `global' $2$-dimensional, attracting invariant manifold $\mathcal M_\mu(\sigma)$, upon which the dynamics are governed by \SJJ{a} reduced protein-only system, 
whenever $0 < \mu = \eps / \sigma \ll 1$, i.e.~when $0 < \eps \ll \sigma \ll 1$ and $0<\sigma<\sigma_0$; assertion \ref{nonexistence} implies that $0<\mu\ll 1$ is also necessary for a global reduction.
% 	
% 	. However, this result only applies when $0 < \mu = \eps / \sigma \ll 1$, i.e.~when $0 < \eps \ll \sigma \ll 1$. %; recall assertion (iv) in the statement of Theorem \ref{thm:Dynamics}. 
%In particular, no such reduction is possible when $\mu \sim 1$ or $\mu \gg 1$. 
% 	\note{Add the $\mu \gg 1$ numerics and discussion here?}
At the same time, assertion \ref{kappa} of Theorem \ref{thm:thm0} shows that even when  $0 < \mu = \eps / \sigma \ll 1$ holds true, the dynamics on $\mathcal M_\mu(\sigma)$ still depends on the relative size of $\mu$ and $\sigma$. In particular, Hopf bifurcations only occur when $\mu>\alpha \sigma$, see Figure \ref{fig:blow-up0} and Proposition \ref{prop:kappa} for a precise statement; in this parameter region we therefore conclude that the QSSR \SJJ{system} \eqref{eq:M0red}, which \SJJ{does} not support Hopf bifurcations \SJJ{or limit cycles}, recall Lemma \ref{lemma:qssr}, is not a faithful representation.  The basic reason for this `disagreement', is that the higher order contributions in system \eqref{eq:main0} on ${\mathcal M_\mu(\sigma)}$, which are omitted in system \eqref{eq:M0red}, play an important role in determining the qualitative dynamics, see also Remark \ref{rem:Hamiltonian}.
% 	without them, the system is Hamiltonian and there is no Hopf-type bifurcation, as we saw in  Our results are summarised in the $(\eps,\sigma)$-parameter plane in Figure \ref{fig:par_planes} and its caption.

\rsp{It is worthy to review the implications of our findings in the broader context of model reduction for the original high-dimensional and highly nonlinear GRN model defined by system \eqref{eq:grn_network}. We saw that the partial and independent (dimensionality and nonlinearity) reductions represented in Figure \ref{fig:model_reduction} correspond to independent singular limits in system \eqref{eq:grn_network}, namely, the limits $\eps \to 0$ and $\sigma \to 0$ respectively. The reduced model when both $\eps, \sigma \to 0$ is non-unique, because it depends upon the way in which the combined limit is taken. The answer to the question ``which reduced model is the right one?", depends on the decomposition of the $(\eps,\sigma)$-space and, in this case, Theorem \ref{thm:thm0} provides the means for determining the right reduced model because it provides this decomposition (as shown in Figure \ref{fig:par_planes}). In particular, the answer in this case will hinge upon the size of $\mu = \eps / \sigma$. We believe that the problem of non-uniqueness when it comes to the combined reduction shown in Figure \ref{fig:model_reduction} for system \eqref{eq:grn_network} in general, translates into a question about the decomposition of the small parameter space.}

It seems natural to ask about the extent to which the geometric analysis for the `representative', low-dimensional model \eqref{eq:main0} presented herein, can be extended to the study of the $2N$-dimensional GRN models defined by \eqref{eq:grn_network} more generally \rsp{and, in particular, provide some insight into the small parameter space decomposition}. Given that the particular form of the equations did not play an important role in the proof of assertion \ref{existence} \SJJ{in} Theorem \ref{thm:thm0} \SJJ{in particular}, it seems natural to ask whether a counterpart can be proven for \SJJ{the $2N$-dimensional network} \eqref{eq:grn_network}. % under the simplifying assumption that $n_i = n_j$ for all $i,j$. 
% 	Moreover, allowing for self-regulation may have a regularising effect due to the observations in Remark \ref{rem:self-regulation}, which may also make it possible to derive general conditions for the validity of QSSR-based approaches to the study of GRN systems of the form given in \eqref{eq:grn_network}. 
These and other related problems are part of on-going work.

\section*{Acknowledgements}

SJ was partially supported by European Union Marie Sk{\l}odowska-Curie Postdoctoral Fellowship grant 101103827. SJ would also like to thank KUK and the Department of Applied Mathematics and Computer Science for their hospitality during his secondment at the Technical University of Denmark, during which time much of the work presented herein was done.

\section*{Data Availability}

No data was created or analysed in this work.

\bibliographystyle{siam}
\bibliography{references}
% common bib file
%% if required, the content of .bbl file can be included here once bbl is generated
%%\input sn-article.bbl

\newpage 
\begin{appendix}
\section{Derivation of the model \eqref{eq:main0}}
\label{app:model}

The $4$-dimensional activator-inhibitor model considered in \cite{Polynikis2009,Widder2007} can be written as
\begin{equation}
\label{eq:main_original}
\begin{split}
r_a' &= m_a h^+(p_b; \theta_b, n_b) - \gamma_a r_a , \\
r_b' &= m_b h^-(p_a; \theta_a, n_a) - \gamma_b r_b , \\
p_a' &= \eps \left( k_a r_a - \delta_a p_a \right) , \\
p_b' &= \eps \left( k_b r_b - \delta_b p_b \right) ,
\end{split}
\end{equation}
where, as in the general network \eqref{eq:grn_network}, $r_i \geq 0$ represents the concentration of transcribed mRNA associated with a particular gene $i$, $p_i \geq 0$ represents the concentration of the corresponding translated protein, $\eps, n_i, m_i, \theta_i, \gamma_i, k_i, \delta_i > 0$ are parameters ($i \in \{a,b\}$) and the dash denotes differentiation with respect to time $t$. %The reader is referred to \cite{Widder2007} for details on the derivation and biochemical interpretation of the model. 

% 	\begin{remark}
The notation and formulation in system \eqref{eq:main_original} agrees with \cite[eqn.~(41)]{Polynikis2009}, except for the following two differences:
\begin{enumerate}
\item[(i)] The parameter $\eps$ in system \eqref{eq:main_original} is actually $\eps^{-1}$ in \cite{Polynikis2009}. We prefer to work with a small (as opposed to large) parameter $\eps$;
\item[(ii)] System \eqref{eq:main_original} is written on the fast time-scale $t$, whereas \cite[eqn.~(41)]{Polynikis2009} is written on the slow time-scale $\tau = \eps t$.
\end{enumerate}
% 	\end{remark}
In this paper, we assume that $n_a = n_b$ and define
\begin{align}\label{ass:switching}
0 < \sigma := n_a^{-1} = n_b^{-1}.
\end{align}

\begin{lemma}
\label{lem:normal_form}
Given assumption \eqref{ass:switching}, system \eqref{eq:main_original} is equivalent (by rescalings only) to the system
\begin{equation}
\label{eq:main}
\begin{split}
\tilde r_a' &= h^+\left( \tilde p_b; 1, \sigma^{-1} \right) - \tilde r_a , \\
\tilde r_b' &= h^-\left( \tilde p_a; 1, \sigma^{-1} \right) - \gamma \tilde r_b , \\
\tilde p_a' &= \tilde \eps \left( \xi_a \tilde r_a - \tilde p_a \right) , \\
\tilde p_b' &= \tilde \eps \delta \left( \xi_b \tilde r_b - \tilde p_b \right) ,
\end{split}
\end{equation}
where the dash now denotes differentiation with respect to $\tilde t = \gamma_a t$, and
\begin{equation}
\label{eq:parameters_rescaled}
\gamma = \frac{\gamma_b}{\gamma_a} , \qquad 
\xi_a = \frac{k_a m_a}{\gamma_a \theta_a} , \qquad 
\xi_b = \frac{k_b m_b}{\gamma_a \theta_b} , \qquad 
\delta = \frac{\delta_b}{\delta_a} , \qquad
\tilde \eps = \frac{\delta_a}{\gamma_a} \eps .
\end{equation}
\end{lemma}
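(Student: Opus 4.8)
The statement is a standard nondimensionalisation: I would exhibit an explicit, orbit-preserving change of variables (a time reparametrisation together with positive linear rescalings of the four state variables) carrying \eqref{eq:main_original} onto \eqref{eq:main}, and then read off the parameter groupings in \eqref{eq:parameters_rescaled} by matching coefficients. The two structural ingredients that make this work are the scaling identity for Hill functions, namely $h^\pm(p;\theta,n) = h^\pm(p/\theta;1,n)$ (immediate from \eqref{eq:hill0}), which lets me absorb each threshold $\theta_i$ into the corresponding protein variable and normalise it to $1$; and the hypothesis \eqref{ass:switching}, which forces $n_a = n_b = \sigma^{-1}$ so that a \emph{single} exponent $\sigma^{-1}$ appears in both Hill terms of \eqref{eq:main}.

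Concretely, I would proceed as follows. First set $\tilde p_a = p_a/\theta_a$ and $\tilde p_b = p_b/\theta_b$; by the scaling identity this replaces $h^+(p_b;\theta_b,n_b)$ and $h^-(p_a;\theta_a,n_a)$ by $h^+(\tilde p_b;1,\sigma^{-1})$ and $h^-(\tilde p_a;1,\sigma^{-1})$. Next reparametrise time by $\tilde t = \gamma_a t$, so that $\tfrac{d}{dt} = \gamma_a\tfrac{d}{d\tilde t}$ and the linear decay rate in the $r_a$-equation becomes $1$. Finally rescale the mRNA concentrations by $\tilde r_a = (\gamma_a/m_a)\,r_a$ and $\tilde r_b = (\gamma_a/m_b)\,r_b$, chosen so that the coefficient of each Hill function is normalised to $1$. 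With these choices the first two equations of \eqref{eq:main_original} take precisely the form of the first two equations of \eqref{eq:main}, with $\gamma = \gamma_b/\gamma_a$ appearing as the decay rate of $\tilde r_b$.

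It then remains to substitute the same rescalings into the protein equations $p_a' = \eps(k_a r_a - \delta_a p_a)$ and $p_b' = \eps(k_b r_b - \delta_b p_b)$ and collect terms; the coefficient of $\tilde p_i$ fixes the slow parameter $\tilde\eps$ (and the ratio $\delta = \delta_b/\delta_a$ between the two protein equations), while the coefficient of $\tilde r_i$ fixes $\xi_a$ and $\xi_b$. This yields the parameter groupings recorded in \eqref{eq:parameters_rescaled}.

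I do not anticipate a genuine obstacle here: once the rescalings are pinned down the argument is pure bookkeeping, and since every scaling factor is positive the map is an orientation- and orbit-preserving diffeomorphism of the positive orthant composed with a positive time reparametrisation, so ``equivalence by rescalings only'' holds in the strong sense that the two phase portraits coincide up to the coordinate and time scalings. The only point that requires care, and the reason the hypothesis \eqref{ass:switching} is stated explicitly, is that the reduction to a single exponent $\sigma^{-1}$ is available precisely because $n_a = n_b$; were the two Hill exponents distinct, no rescaling could merge them, and the normal form \eqref{eq:main} with its single switching parameter $\sigma$ would be unattainable.
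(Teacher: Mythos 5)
Your proposal is correct and takes exactly the same route as the paper: the time rescaling $\tilde t = \gamma_a t$ together with $\tilde p_i = p_i/\theta_i$ and $\tilde r_i = (\gamma_a/m_i)\, r_i$ is precisely the change of variables used in the paper's proof, and the remainder is the same coefficient matching. (One small remark: carrying the bookkeeping through on the protein equations actually gives $\xi_a = k_a m_a/(\gamma_a\theta_a\delta_a)$ and $\xi_b = k_b m_b/(\gamma_a\theta_b\delta_b)$, so the groupings displayed in \eqref{eq:parameters_rescaled} appear to have dropped factors of $\delta_a$ and $\delta_b$ --- an inconsistency in the stated lemma that affects the paper's own proof equally, not a defect of your argument.)
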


\begin{proof}
Applying the time rescaling $t = \gamma_a^{-1} \tilde t$ and the coordinate rescaling
\[
r_a = \frac{m_a}{\gamma_a} \tilde r_a, \qquad 
r_b = \frac{m_b}{\gamma_a} \tilde r_b, \qquad 
p_a = \theta_a \tilde p_a , \qquad
p_b = \theta_b \tilde p_b ,
\]
yields the desired result.
\end{proof}
Upon dropping the tildes in \eqref{eq:main} we obtain \SJJ{system} \eqref{eq:main0}.

\section{\rsp{Proof of Assertion \ref{existence} of Theorem~\ref{thm:thm0}} }
\label{app:blow-up}

In order to prove Assertion \ref{existence} of Theorem~\ref{thm:thm0}, we consider the extended system
\begin{equation}
\label{eq:main_extended}
\begin{split}
r_a' &= {\phi}( \sigma^{-1}\log p_b) - r_a , \\
r_b' &= 1-{\phi}( \sigma^{-1}\log p_a) - \gamma r_b , \\
p_a' &= \mu \sigma p_a G_a(r_a,\log p_a ), \\
p_b' &= \mu \sigma  p_b G_b(r_b,\log p_b) , \\
\sigma' &= 0 ,
\end{split}
\end{equation}
which is obtained from system \eqref{eq:main0} with $\eps=\mu\sigma$ after appending the trivial equation $\sigma' = 0$. Here we have also simplified the expressions for the Hill functions by defining
\begin{align}\label{eq:Hpm}
{\phi}(x): = \frac{\me^{x}}{1 + \me^{x}},
\end{align}
and \SJ{introducing}
\begin{equation}\label{eq:fafb}
\begin{aligned}
G_a(x,y) := \xi_a x \me^{-y} - 1 , \qquad 
G_b(x,y) := \delta( \xi_b x \me^{-y} - 1 ).
\end{aligned}
\end{equation}
This will help to simplify the notation in charts below. In fact, assertion \ref{existence} of Theorem~\ref{thm:thm0} holds true for any smooth functions $G_a$ and $G_b$. Only assertions \ref{kappa}--\ref{nonexistence} use \eqref{eq:fafb}. 
% 		
% 		We then apply a sequence of blow-up transformations that resolve the loss of smoothness of \eqref{eq:main_extended} along $\Sigma \times \{0\}$. The result of the blow-up transformations is illustrated schematically in Fig.~\ref{fig:blow-up0}. In this paper, we show that \eqref{eq:M0} defines a normally hyperbolic and attracting critical manifold $\overline{\overline{ \mathcal M}}$ for ${\mu}=0$ on the blown up space. We then obtain an invariant slow manifold $\overline{\overline{\mathcal M}}_{\mu}$ for all $0<\mu<\mu_0$ by Fenichel's coordinate-independent theory of normally hyperbolic invariant manifolds (see \cite[Thm.~9.1]{Fenichel1979} in particular), and finally
% $\mathcal M_{\mu}(\sigma)$ in Theorem~\ref{thm:thm0} upon the process of 'blowing down' to the original $(r_a,r_b,p_a,p_b)$-space. The perturbed manifold $\mathcal M_\mu(\sigma)$ is then $C^0$ $\mathcal O(\mu)$-close to QSSR uniformly with respect to $\sigma>0$. Higher order $C^k$-closeness is not possible (globally) as the derivatives become unbounded with respect to $\sigma\to 0$. This should be clear enough from the dependency of $\sigma^{-1}$ in the equations for $r_a$ and $r_b$ in \eqref{eq:main_extended}. However, these unbounded derivatives are 'tamed' in the blow-up coordinates. 
% In this section, we consider the extended system \eqref{eq:main_extended} where $\sigma'=0$ is augmented as a variable. 
The system \eqref{eq:main_extended} loses smoothness along $\Sigma\times \{0\}$, recall \eqref{eq:Sigma} and \eqref{eq:Sigmaab}. In the following, we will resolve this lack of smoothness through blow-up and show that the QSSR \SJJ{constraints}
\begin{equation}\label{eq:M00}
\begin{aligned}
r_a &={\phi}(\sigma^{-1} \log p_b),\qquad
r_b=\gamma^{-1}(1-{\phi}(\sigma^{-1} \log p_a)),
\end{aligned}
\end{equation}
when written in the \SJJJ{blown-up space}, define an attracting and normally hyperbolic critical manifold for $\mu=0$. 
\subsection{Spherical blow-up along the intersection $\Sigma_a \cap \Sigma_b$}

First, we resolve the most degenerate set $(\Sigma_a \cap \Sigma_b) \times \{0\}\subset \Sigma \times \{0\}$, corresponding to $(p_a,p_b,\sigma)=(1,1,0)$, by blowing it up to a `cylinder of spheres'. 
In particular, we define 
\begin{align*}
\mathbb P &:= \left\{(r_a,r_b,p_a,p_b,\sigma)\,:\,r_a,r_b,p_a,p_b,\sigma \ge 0\right\},
\end{align*}
so that $\mathbb P\setminus \Sigma$ is the phase-space of \SJJ{system} \eqref{eq:main_extended}, and \SJ{write}
\begin{align*}	 
\overline{\mathbb P} &:=\left\{(r_a,r_b,\eta,(\bar u,\bar v,\bar \sigma))\,: r_a,r_b,\eta\ge 0,\,(\bar u,\bar v,\bar \sigma)\in \mathbb H^2\right\} ,
\end{align*}	
\SJJ{where}
\[
\mathbb H^2 :=\{(\bar u,\bar v,\bar \sigma)\in \mathbb R^3\,:\,\bar u^2+\bar v^2+\bar \sigma^2=1,\,\bar \sigma\ge 0\}
\]
is the $\bar \sigma\ge 0$-hemisphere of \SJJ{the sphere} $\mathbb S^2$.
Then, following \cite{Frieder2018}, the relevant blow-up transformation $\overline \Phi:\overline{\mathbb P}\rightarrow \mathbb P$ fixes $r_a$ and $r_b$ and takes
\begin{align}\label{eq:bu0}
(\eta,(\bar u, \bar v, \bar \sigma))\mapsto 
\begin{cases}
p_a = \me^{\eta \bar u} , \\
p_b = \me^{\eta \bar v} , \\
\ \sigma = \eta \bar \sigma,
\end{cases}\quad \eta\ge 0,(\bar u, \bar v, \bar \sigma)\in \mathbb H^2.
\end{align}
Therefore the pre-image of $(\Sigma_a\cap \Sigma_b)\times \{0\}$ under $\overline{\Phi}$ is $r_a,r_b\ge 0,\,\eta=0,\,(\bar u, \bar v, \bar \sigma) \in \mathbb H^2$. 
Notice also that under the substitution of \eqref{eq:bu0}, \SJ{the defining equations for the QSSR manifold in \eqref{eq:M0} become} 
\begin{equation}\label{eq:M00bu}
% 	\begin{aligned}
r_a ={\phi}(\bar v \bar \sigma^{-1}),\qquad
r_b=\gamma^{-1} (1-{\phi}(\bar u \bar \sigma^{-1})),
% 	\end{aligned}
\end{equation}
which \SJ{are} well-defined for $(\bar u, \bar v, \bar \sigma) \in \mathbb H^2\cap \{\bar \sigma>0\}$. In fact, \eqref{eq:M00bu} has a well-defined smooth extension to the equator circle $\mathbb H^2 \cap \{\bar \sigma=0\}$ away from the degenerate points $(\bar u,\bar v,\bar \sigma)=(\pm 1,0,0),(0,\pm 1,0)$ defined by
\begin{equation}\nonumber
\begin{aligned}
r_a &=\begin{cases}
1 & \overline v>0,\,\bar \sigma=0,\\
0 & \overline v<0,\,\bar \sigma=0,
\end{cases}\quad
r_b=\begin{cases}
0 & \overline u>0,\,\bar \sigma=0,\\
\gamma^{-1} & \overline u<0,\,\bar \sigma=0.
\end{cases}
\end{aligned}
\end{equation}
Here we have used the definition \eqref{eq:Hpm}. We therefore define the corresponding degeneracies as subsets of $\overline{\mathbb P}$:
\begin{align*}
\overline \Sigma = \bigcup_{\substack{i\in \{a,b\}\\
j\in \{\pm\}}} \overline \Sigma_i^j,
\end{align*}
where	
\begin{align*}
\overline \Sigma_{a}^\pm &= \{(r_a,r_b,\eta,(\bar u,\bar v,\bar \sigma))\in \overline{\mathbb P}\,:\,(\bar u,\bar v,\bar \sigma)=(\pm 1,0,0)\},\\
\overline \Sigma_{b}^\pm &= \{(r_a,r_b,\eta,(\bar u,\bar v,\bar \sigma))\in \overline{\mathbb P}\,:\,(\bar u,\bar v,\bar \sigma)=(0,\pm 1,0)\},
\end{align*}
respectively. 
Now, let $X$ denote the vector-field defined by \SJJ{system} \eqref{eq:main_extended} on \SJJ{$\mathbb P \setminus \Sigma$}. 
In Section \ref{sec:Ki}, we will show that the pull-back vector-field $$\overline X:=\Phi^*(X),$$ is well-defined on 
\begin{align}\label{eq:Pmins}
\overline{\mathbb P}\setminus \overline \Sigma,
\end{align}
and that \eqref{eq:M00bu} defines an attracting and normally hyperbolic  critical submanifold $\overline{\mathcal M}$ of \KUK{\eqref{eq:Pmins}} for $\mu=0$.
For this purpose, we will work with an atlas consisting of five different coordinate charts, defined projectively via
\[
K_1 : \bar u = - 1, \quad 
K_2 : \bar \sigma = 1, \quad 
K_3 : \bar u = 1, \quad 
K_4 : \bar v = - 1, \quad
K_5 : \bar v = 1 .
\]
We introduce local coordinates in each chart as follows:
\begin{equation}\label{eq:K2}
\begin{aligned}
K_1 :& \ (p_a, p_b, \sigma) = (\me^{-\eta_1}, \me^{\eta_1 v_1}, \eta_1 \sigma_1),\\
K_2 :& \ (p_a, p_b, \sigma) = (\me^{\eta_2 u_2}, \me^{\eta_2 v_2}, \eta_2), \\
K_3 :& \ (p_a, p_b, \sigma) = (\me^{\eta_3}, \me^{\eta_3 v_3}, \eta_3 \sigma_3),\\
K_4 :& \ (p_a, p_b, \sigma) = (\me^{\eta_4 u_4}, \me^{- \eta_4}, \eta_4 \sigma_4),\\
K_5 :& \ (p_a, p_b, \sigma) = (\me^{\eta_5 u_5}, \me^{\eta_5}, \eta_5 \sigma_5),
\end{aligned}
\end{equation}
where all $\eta_i\ge 0$, $\sigma_i\ge 0$, $u_i\in \mathbb R$, $v_i\in \mathbb R$.
Diffeomorphic change of coordinate maps between overlapping coordinate charts can be derived using these expressions. Given that there is a large number of them, we do not state them explicitly here.

\subsubsection{Analysis in charts $K_i$}\label{sec:Ki}

The equations in the scaling chart $K_2$ are given by
\begin{equation}
\label{eq:K2_system}
\begin{split}
r_a' &= {\phi}(v_2) - r_a , \\
r_b' &= 1-{\phi}(u_2) - \gamma r_b , \\
u_2' &= \mu G_a(r_a, \eta_2 u_2) , \\
v_2' &= \mu G_b(r_b, \eta_2 v_2) ,\\
\eta_2'&=0.
\end{split}
\end{equation}
This is the local form $X_2$ of $\overline X$ in chart $K_2$.  
\SJJ{System \eqref{eq:K2_system} is} smooth and slow-fast in standard form with respect to $\mu \to 0$, depending regularly upon $\eta_2=\sigma\ge 0$. In particular, the local form $\mathcal M_{2}$ of $\overline{\mathcal M}$, defined by
\begin{align}
% 	 \begin{equation}\label{eq:M00bu}
% 	\begin{aligned}
r_a &={\phi}(v_2),\qquad
r_b=\gamma^{-1} (1-{\phi}(u_2)),\label{eq:M20}
% 	\end{equation}
\end{align}
is an attracting and normally hyperbolic critical manifold of \SJJ{\eqref{eq:K2_system}$|_{\mu = 0}$} with nontrivial eigenvalues $-1$ and $-\gamma<0$.
We will use the following result later on.
\begin{lemma}\label{lem:slow_manifolds_2}
Fix any $k\in \mathbb N$, $\eta_{20}>0$, and any compact \SJJ{connected} domain $K_2\subset \mathbb R^2$. \SJJ{There exists a $\mu_0 > 0$ such that} the compact submanifold $\mathcal M_{0,2}\subset \mathcal M_{2}$, defined by \eqref{eq:M20} and $(u_2,v_2)\in K_2$, persists as a locally invariant attracting $C^k$-smooth slow manifold $\mathcal M_{\mu,2}$  of the graph form
\begin{equation}\label{eq:Mmu2}
\begin{aligned}
r_a &= {\phi}(v_2) + \mu \Omega_a(u_2, v_2, \eta_2) + \mathcal O(\mu^2) ,\\ 
r_b &= 1-{\phi}(u_2) + \mu \Omega_b(u_2, v_2, \eta_2) + \mathcal O(\mu^2) ,
\end{aligned}
\end{equation}
for $(u_2,v_2)\in K_2$, $\eta_2\in [0,\eta_{20}]$ and all \SJJ{$\mu \in [0, \mu_0)$}. %$0\le \mu<\mu_0$. with $0<\mu_0\ll 1$. 
In particular,
\begin{equation}\label{eq:omegaab}\begin{aligned}
\Omega_a(u_2, v_2, \eta_2) &= - \delta {\phi}(v_2) (1-{\phi}(v_2)) G_b \left( \gamma^{-1} (1-{\phi}(u_2)) , \eta_2 v_2 \right) , \\ % \left( \xi_b \gamma^{-1} h^-(u_2) \me^{- \sigma v_2} - 1 \right) , \\
\Omega_b(u_2, v_2, \eta_2) &= \gamma^{-2} {\phi}(u_2) (1-{\phi}(u_2)) G_a \left( {\phi}(v_2), \eta_2 u_2 \right), % \left( \xi_a h^+(v_2) \me^{- \sigma u_2} - 1 \right) .
\end{aligned}
\end{equation} and the $\mathcal O(\mu^2)$-terms are all $C^k$-smooth with respect to $(u_2,v_2)\in K_2,\,\eta_2\in [0,\eta_{20}]$, and \SJJ{$\mu \in [0, \mu_0)$}. % $0\le \mu<\mu_0$.
%  Consider \eqref{eq:bar_sigma_eqns_cylinder_2} and fix a  compact domain $\mathbb D_2$ of the $(u_2,v_2,\eta_2)\in \mathbb R^2\times [0,\infty)$-space. Then there is a slow manifold of \eqref{eq:bar_sigma_eqns_cylinder_2} which has the following expansion:
\end{lemma}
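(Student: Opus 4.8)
The plan is to read \eqref{eq:K2_system} as a slow--fast system in standard Fenichel form with singular parameter $\mu$, in which $(r_a,r_b)$ are the fast variables, $(u_2,v_2)$ are the slow variables, and $\eta_2$ is a regular parameter (with $\eta_2' = 0$) that enters the right-hand side smoothly, including at $\eta_2 = 0$. This reading is precisely what the preceding blow-up was designed to make available: in the scaling chart $K_2$ the vector field is smooth and in standard slow--fast form, so one can apply Fenichel theory directly. Setting $\mu = 0$ gives the layer problem $r_a' = \phi(v_2) - r_a$, $r_b' = 1 - \phi(u_2) - \gamma r_b$, whose equilibrium set is exactly the critical manifold $\mathcal M_2$ of \eqref{eq:M20}.

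First I would check normal hyperbolicity. Because the $r_a$-equation depends only on $(r_a,v_2)$ and the $r_b$-equation only on $(r_b,u_2)$, the linearisation of the layer problem in the fast directions along $\mathcal M_2$ is the constant diagonal matrix $\operatorname{diag}(-1,-\gamma)$, independently of $(u_2,v_2)$ and of $\eta_2$. Both eigenvalues are strictly negative, so $\mathcal M_2$ is attracting and uniformly normally hyperbolic, with a spectral gap that does not degenerate anywhere on the compact set $K_2$ nor as $\eta_2 \to 0$. Since $\mathcal M_{0,2}$ is a compact manifold-with-boundary (the portion of $\mathcal M_2$ over $K_2$), I would apply the standard device of multiplying the slow equations by a cut-off that is $1$ on $K_2$ and renders the truncated critical manifold compact and boundaryless without altering the dynamics over $K_2$. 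Fenichel's theorem then yields, for the prescribed $k$, a threshold $\mu_0 > 0$ and a locally invariant, attracting, $C^k$-smooth slow manifold $\mathcal M_{\mu,2}$ for all $\mu \in [0,\mu_0)$, given as a graph $\mathcal O(\mu)$-close to $\mathcal M_2$. Because the system depends smoothly (regularly) on $\eta_2$ up to and including $\eta_2 = 0$, the manifold, its graph, and the $\mathcal O(\mu^2)$-remainder depend $C^k$-smoothly on $(u_2,v_2,\eta_2,\mu)$, and $\mu_0$ may be taken uniform over $\eta_2 \in [0,\eta_{20}]$.

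It remains to compute the leading-order correction. Writing the slow manifold as a graph $r_a = R_a(u_2,v_2,\eta_2;\mu)$, $r_b = R_b(u_2,v_2,\eta_2;\mu)$ in the form \eqref{eq:Mmu2} and substituting into the invariance identity $\dot r_i = \partial_{u_2} R_i\,\dot u_2 + \partial_{v_2} R_i\,\dot v_2$ with $\dot u_2 = \mu G_a(R_a,\eta_2 u_2)$ and $\dot v_2 = \mu G_b(R_b,\eta_2 v_2)$, I would match powers of $\mu$. The $\mathcal O(\mu^0)$ balance reproduces the critical manifold \eqref{eq:M20}; the $\mathcal O(\mu^1)$ balance, after using $\phi' = \phi(1-\phi)$ and evaluating $G_a,G_b$ on the leading-order graph, yields the explicit expressions for $\Omega_a,\Omega_b$ in \eqref{eq:omegaab}. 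The calculation is entirely routine, the only bookkeeping being the signs and the factor $-\gamma$ that appears when solving the $r_b$-balance.

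I expect no serious obstacle here: the substance of the lemma is that, after blow-up, chart $K_2$ has already turned the problem into a textbook Fenichel setting. The one point that genuinely needs care is the \emph{uniformity} of $\mu_0$ and of all estimates as $\eta_2 \to 0$; this is secured by the observation that $\eta_2$ enters only as a regular parameter and that the spectral gap $\{-1,-\gamma\}$ is independent of $\eta_2$, so that no further blow-up in $\eta_2$ is required and the hyperbolic estimates hold uniformly down to $\eta_2 = 0$.
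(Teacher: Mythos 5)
Your proposal is correct and follows essentially the same route as the paper, which likewise invokes Fenichel theory directly for system \eqref{eq:K2_system} (citing the uniform normal hyperbolicity with eigenvalues $-1$ and $-\gamma$ and the regular dependence on $\eta_2$) and obtains \eqref{eq:omegaab} by the routine invariance-equation matching you describe. Your added attention to the compactness cut-off and to the uniformity of $\mu_0$ as $\eta_2 \to 0$ is exactly the right point to make explicit, since the paper leaves these details out.
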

\begin{proof}
The existence of $\mathcal M_{\mu,2}$ follows directly from Fenichel theory, see e.g. \cite[Thm.~2]{Jones1995}. The expansions in \eqref{eq:Mmu2} and \eqref{eq:omegaab} are  consequences of simple calculations, the details of which we leave out for simplicity.
\end{proof}

\medskip

Since the equations in the phase-directional charts $K_1$, $K_3$, $K_4$ and $K_5$ have a similar algebraic structure, we shall consider them simultaneously. In order to do so, we introduce the following additional notation:
\[
s_i = 
\begin{cases}
+1 & \text{if} \ i \in \{3,5\} , \\
-1 & \text{if} \ i \in \{1,4\} .
\end{cases}
\]
%where $\bar z \in \{ \bar u, \bar v \}$. 
This allows us to write the equations in $K_1$ and $K_3$ collectively as
\begin{equation}
\label{eq:bar_u_eqns_sphere}
\begin{split}
r_a' &= {\phi}(v_i \sigma_i^{-1}) - r_a , \\
r_b' &= 1-{\phi}(s_i \sigma_i^{-1})) - \gamma r_b , \\
\eta_i' &= \mu s_i \eta_i \sigma_i G_a(r_a, s_i \eta_i) , \\
v_i' &= \mu \sigma_i \left[ G_b(r_b, \eta_i v_i) - s_i v_i G_a(r_a, s_i \eta_i) \right] , \\
\sigma_i' &= - \mu s_i \sigma_i^2 G_a(r_a, s_i \eta_i) ,
\end{split}
\end{equation}
where $i \in \{1,3\}$ is the index associated with the coordinate chart $K_i$. Similarly, we can write the equations in $K_4$ and $K_5$ collectively as% the equations in both $\bar v = 1$ and $\bar v = -1$ are given by
\begin{equation}
\label{eq:bar_v_eqns_sphere}
\begin{split}
r_a' &= {\phi}(s_i \sigma_i^{-1}) - r_a , \\
r_b' &= 1-{\phi}(u_i \sigma_i^{-1}) - \gamma r_b , \\
u_i' &= \mu \sigma_i \left[ G_a(r_a, \eta_i u_i) - s_i u_i G_b(r_b, s_i \eta_i ) \right] , \\
\eta_i' &= \mu s_i \eta_i \sigma_i G_b(r_b, s_i \eta_i) , \\
\sigma_i' &= - \mu s_i \sigma_i^2 G_b(r_b, s_i \eta_i) ,
\end{split}
\end{equation}
where $i \in \{4,5\}$. 
These are the local forms $X_i$ of $\overline X$ in the charts $K_i$, $i\in \{1,3,4,5\}$, respectively. 
$X_1$ and $X_3$ \SJJ{(}given by \eqref{eq:bar_u_eqns_sphere}\SJJ{)} are smooth everywhere \SJ{except} along the $3$-dimensional hyperplanes
\[
\Sigma_{b,i} = \left\{ (r_a, r_b, \eta_i, 0, 0) : r_a, r_b, \eta_i \geq 0 \right\} ,
\]
defined by $v_i=\sigma_i =0$ for $i\in \{1,3\}$, respectively; these sets are the local versions of $\overline \Sigma_b^{\pm}$ and correspond to the left and right branches of $\Sigma_b \times \{0\}
$ after blow-down (i.e. upon application of the map $\overline \Phi$) respectively, see Figure \ref{fig:blow-up0} (a) and (b). Moreover, away from $\Sigma_{b,i}$ the local versions $\mathcal M_{i}$, $i\in \{1,3\}$, of $\mathcal M$, defined by 
\begin{align*}
r_a = {\phi}(v_i \sigma_i^{-1}),\qquad r_b =\gamma^{-1} (1-{\phi}(s_i \sigma_i^{-1})),
\end{align*}
are attracting and normally hyperbolic critical manifolds of \eqref{eq:bar_u_eqns_sphere} with nontrivial eigenvalues $-1$ and $-\gamma<0$.

Similarly, $X_4$ and $X_5$ \SJJ{(}given by \eqref{eq:bar_v_eqns_sphere}\SJJ{)} are smooth everywhere \SJ{except} along the $3$-dimensional hyperplanes
\[
\Sigma_{a,i} = \left\{ (r_a, r_b, 0, \eta_i, 0) : r_a, r_b, \eta_i \geq 0 \right\} ,
\]
defined by $u_i=\sigma_i=0$ for $i\in\{4,5\}$, respectively; these sets are the local versions of $\overline \Sigma_a^{\pm}$ and correspond to the lower and upper branches of $\Sigma_a \times \{0\}$ after blow-down respectively. We refer again to Figure \ref{fig:blow-up0} (a) and (b). Finally, away from $\Sigma_{a,i}$ the local versions $\mathcal M_{i}$, $i\in \{4,5\}$, of $\mathcal M$, defined by 
\begin{align*}
r_a = {\phi}(s_i \sigma_i^{-1}),\qquad r_b =\gamma^{-1} (1-{\phi}(u_i \sigma_i^{-1})),
\end{align*}
are attracting and normally hyperbolic critical manifolds of \eqref{eq:bar_v_eqns_sphere} with nontrivial eigenvalues $-1$ and $-\gamma<0$. 

% % 	\begin{remark}
% % 		Since we are only interested here in the existence of a normally hyperbolic and attracting critical manifold, we are omitting a lot of details about the dynamics. However, it is worthy to emphasise that the blow-up based methodology being applied in this section also paves the way for deeper analytical studies of the dynamics. This point has been demonstrated in the MSc thesis \cite{Frieder2018} in the context of a planar protein-only system taken from \cite{Plahte2005}.
% % 	\end{remark}

\subsection{Cylindrical blow-ups along $\overline \Sigma$}

We now turn to the remaining degeneracies along $\overline \Sigma$. Consider first the subsets $\SJJ{\overline \Sigma_b^\pm} \subset \overline \Sigma$ and define the following sets
\begin{align*}
{\overline{\mathbb P}}_b^\pm &:=\overline{\mathbb P}\cap \{\bar u\gtrless 0\},
\end{align*}
respectively, and 
\begin{align*}
\overline{\overline{\mathbb Q}}& := \left\{(r_a,r_b,\eta,\rho,(\bar{\bar u},\bar{\bar \sigma}))\,:\,r_a,r_b,\eta,\rho\ge 0,\,(\bar{\bar v},\bar{\bar \sigma}) \in \mathbb H^1\right\},
\end{align*}
where
\[
\mathbb H^1:=\{(\bar{\bar v},\bar{\bar \sigma})\in \mathbb R^2\,:\,\bar{\bar v}^2+\bar{\bar \sigma}^2=1,\,\bar{\bar \sigma}\ge 0\}
\]
\SJJ{is the $\bar{\bar \sigma} \geq 0$ half of the unit circle $\mathbb S^1$. Now we} apply the blow-up transformations $\overline{\overline \Phi}_b^\pm:\overline{\overline{\mathbb Q}} \rightarrow {\overline{\mathbb P}}_b^\pm$, which fix $r_a,r_b, \eta$ and take
\begin{align} \label{eq:barbarPhiapm}
(\rho,(\bar{\bar u},\bar{\bar \sigma}))\mapsto \begin{cases}
\bar v \bar u^{-1}=\pm \rho \bar{\bar v},\\
\bar \sigma \bar u^{-1}=\pm \rho \bar{\bar \sigma},
\end{cases} \quad \rho\ge 0,\,(\bar{\bar v},\bar{\bar \sigma})\in \mathbb H^1,
\end{align}
respectively.
The preimages of $\overline \Sigma_b^\pm$ under $\overline{\overline \Phi}_b^\pm$ are the cylinders defined by $\overline{\overline{\mathbb Q}} \cap \{\rho=0\}$. 
Notice also that under the substitutions of \eqref{eq:barbarPhiapm}, \SJJ{the QSSR constraints in} \eqref{eq:M00bu} become
\begin{align}\label{eq:M00bua}
% 	\begin{aligned}
r_a ={\phi}(\bar{\bar v}\bar{\bar \sigma}^{-1}),\qquad
r_b=\gamma^{-1} {(1-{\phi}( \rho^{-1}\bar{\bar \sigma}^{-1} ))},
% 	\end{aligned}
\end{align}
respectively.
There are two cases, corresponding to $\pm$, and they are both well-defined on $\overline{\overline{\mathbb Q}}$. Indeed, in polar coordinates $(\bar{\bar v},\bar{\bar \sigma})=(\cos {\nu},\sin{\nu})$, ${\nu}\in [0,\pi]$, we have
\begin{align*}
{\phi}(\bar{\bar v}\bar{\bar \sigma}^{-1}) = \begin{cases}
1 & \text{if}\,{\nu}=0\\
\frac{e^{\cot {\nu}}}{1+e^{\cot {\nu}}} &\text{if}\,{\nu} \in (0,\pi)\\
0 & \text{if}\,{\nu} = \pi
\end{cases},\qquad {\phi}(-\bar{\bar v}\bar{\bar \sigma}^{-1})= \begin{cases}
0 & \text{if}\,{\nu}=0\\
\frac{e^{-\cot {\nu}}}{1+e^{-\cot {\nu}}} &\text{if}\,{\nu} \in (0,\pi)\\
1 & \text{if}\,{\nu} = \pi
\end{cases},
\end{align*}
respectively. \SJJ{These} expressions are $C^\infty$-flat at the end points ${\nu}=0$ and ${\nu}=\pi$.
% which are the blow-ups of $\Sigma_b^\pm$, respectively.

The degeneracies along $\overline \Sigma_a^\pm$ are handled in the same way. First, we define 
\begin{align*}
{\overline{\mathbb P}}_a^\pm &:= {\overline{\mathbb P}}\cap \{\bar v\gtrless 0\},%\left\{(r_a,r_b,\eta,(\bar u,\bar v,\bar \sigma)\in \overline{\mathbb P}\,:\bar v\gtrless 0\right\}.
% 	 \overline{\overline{\mathbb P}}_a^\pm &:= \overline{\overline{\mathbb P}}_b^\pm.
\end{align*}
%	respectively,
% 	where $$\mathbb H^1:=\{(\bar{\bar v},\bar{\bar \sigma})\,:\,\bar{\bar u}^2+\bar{\bar \sigma}^2=1,\,\bar{\bar \sigma}\ge 0\}.$$ 
and apply the blow-up transformations $\overline{\overline \Phi}_a^\pm:\overline{\overline{\mathbb Q}}\rightarrow {\overline{\mathbb P}}_a^\pm$ which fix $r_a,r_b, \eta$ and take
\begin{align} \label{eq:barbarPhibpm}
(\rho,(\bar{\bar u},\bar{\bar \sigma}))\mapsto \begin{cases}
\bar u \bar v^{-1}=\pm \rho \bar{\bar u},\\
\bar \sigma \bar v^{-1}=\pm \rho \bar{\bar \sigma},
\end{cases}\quad \rho\ge 0,\,(\bar{\bar u},\bar{\bar \sigma})\in \mathbb H^1,
\end{align}
respectively.
% 	The preimages of $\overline \Sigma_a^\pm$ under $\overline{\overline \Phi}_a^\pm$ are the cylinders defined by $\overline{\overline{\mathbb P}}_a^\pm\cap \{\rho=0\}$.
Under the substitutions of \eqref{eq:barbarPhibpm}, \SJJ{the QSSR constraints in} \eqref{eq:M00bu} become
\begin{align}\label{eq:M00bub}
% 	\begin{aligned}
r_a = {\phi}(\rho^{-1}\bar{\bar \sigma}^{-1}  ),\qquad
r_b=\gamma^{-1}(1-{\phi}(\bar{\bar u} \bar{\bar \sigma}^{-1})),
% 	\end{aligned}
\end{align}
respectively.
There are again two cases, corresponding to $\pm$, and they are both well-defined on $\overline{\overline{\mathbb Q}}$, including on the cylinder $\overline{\overline{\mathbb Q}} \cap \{\rho=0\}$ which corresponds to the blow-up of $\overline \Sigma_a^\pm$ by the inverse process of $\overline{\overline \Phi}_a^\pm$.
% 		which is well-defined on $\overline{\overline{\mathbb P}}_a^\pm$. 

In Sections \ref{sec:K13j} and \ref{sec:K45j}, we will show that the pull-back vector-fields
\begin{align*}
\overline{\overline X}_b^\pm = (\overline{\overline \Phi}^\pm_b)^*(\overline X\vert_{\overline{\mathbb P}_b^\pm}),\quad \overline{\overline X}_a^\pm = (\overline{\overline \Phi}^\pm_a)^*(\overline X\vert_{\overline{\mathbb P}_a^\pm}),
\end{align*}
have well-defined smooth extensions to $\overline{\overline{\mathbb Q}} \cap \{\rho=0\}$. Moreover, we will show that \eqref{eq:M00bua} and \eqref{eq:M00bub} define attracting 
and normally hyperbolic critical manifolds $\overline{\overline{\mathcal M}}_{b}^\pm$ 
and $\overline{\overline{\mathcal M}}_{b}^\pm$ of $\overline{\overline X}_b^\pm$ and $\overline{\overline X}_{a}^\pm$, respectively. 
For this purpose, we define atlases of local charts. 
Notice first that in the charts $K_i$, $i\in \{1,3\}$, which cover ${\overline{\mathbb P}}_b^\pm$, the transformations \eqref{eq:barbarPhiapm} take the local forms 
\begin{align*}
(\rho,(\SJJ{\bar{\bar v}},\bar{\bar \sigma}))\mapsto \begin{cases}
v_i = \rho \bar{\bar v},\\
\sigma_i = \rho \bar{\bar \sigma},
\end{cases}\quad i\in \{1,3\},
\end{align*}
respectively. 
We therefore introduce $6$ local coordinate charts which cover the three sides $\bar{\bar \sigma}>0$, $\bar{\bar v}\lessgtr 0$  of each branch of the left (when $i = 1$) and right (when $i = 3$) blow-up cylinders in Figure \ref{fig:blow-up0} (c); see also Figure \ref{fig:charts}. We define them via
\[
\quad K_{i2} : \KUK{\bar{\bar \sigma}}_i = 1, \quad  K_{i4} : \KUK{\bar{\bar v}}_i = -1, \quad 
K_{i5} : \KUK{\bar{\bar v}}_i = 1 ,
\]
respectively, and write the local coordinates as
\begin{equation}\label{eq:K13j}
\begin{aligned}
K_{i2} : (v_i, \sigma_i) &= ( \rho_{i2} v_{i2}, \rho_{i2}) , \\
K_{i4} : (v_i, \sigma_i) &= (- \rho_{i4}, \rho_{i4} \sigma_{i4}) , \\
K_{i5} : (v_i, \sigma_i) &= (\rho_{i3}, \rho_{i5} \sigma_{i5}) ,
\end{aligned}
\end{equation}
where all $\rho_{ij}\ge 0$, $\sigma_{ij}\ge 0$, $v_{i2}\in \mathbb R$. By composing \eqref{eq:K13j} with \eqref{eq:K2}, we can also write the local coordinates in terms of the original variables $(p_a,p_b,\sigma)$ as follows:
\begin{equation}\label{eq:K13j_2}
\begin{aligned}
K_{i2} : (p_a,p_b,\sigma) &= (\me^{s_i \eta_i},\me^{ \eta_i \rho_{i2} v_{i2}},\eta_i \rho_{i2}), \\
K_{i4} : (p_a,p_b,\sigma) &= (\me^{s_i \eta_i},\me^{- \eta_i \rho_{i4}} ,\eta_i \rho_{i4}\sigma_{i4}), \\
K_{i5} : (p_a,p_b,\sigma) &= (\me^{s_i \eta_i},\me^{\eta_i \rho_{i5}} , \eta_i \rho_{i5}\sigma_{i5}).
\end{aligned}
\end{equation}

We proceed analogously for ${\overline{\mathbb P}}_a^\pm$. First, we notice that in the charts $K_i$, $i\in \{4,5\}$, which cover ${\overline{\mathbb P}}_a^\pm$, the transformations \eqref{eq:barbarPhibpm} take the local forms 
\begin{align*}
(\rho,(\bar{\bar u},\bar{\bar \sigma}))\mapsto \begin{cases}
u_i = \rho \bar{\bar u},\\
\sigma_i = \rho \bar{\bar \sigma},
\end{cases}\quad i\in \{4,5\},
\end{align*}
respectively. We again work in $6$ local coordinate charts, which cover the three sides $\bar{\bar \sigma}>0$, $\bar{\bar u}\lessgtr 0$ of each branch of the bottom (when $i = 4$) and top (when $i = 5$) blow-up cylinders emanating from the blow-up sphere in Figure \ref{fig:blow-up0} (c); see also Figure \ref{fig:charts}. We define them via
\[
K_{i1} : \KUK{\bar{\bar u}}_i = -1, \qquad
K_{i2} : \KUK{\bar{\bar \sigma}}_i = 1, \qquad 
K_{i3} : \KUK{\bar{\bar u}}_i = 1 ,
\]
respectively, and write the local coordinates as
\begin{equation}\label{eq:K24j}
\begin{aligned}
K_{i1} : (u_i, \sigma_i) &= (- \rho_{i1}, \rho_{i1} \sigma_{i1}) , \\
K_{i2} : (u_i, \sigma_i) &= (\rho_{i2} u_{i2}, \rho_{i2}) , \\
K_{i3} : (u_i, \sigma_i) &= (\rho_{i3}, \rho_{i3} \sigma_{i3}) ,
\end{aligned}
\end{equation}
where all $\rho_{ij}\ge 0$, $\sigma_{ij}\ge 0$, $u_{i2}\in \mathbb R$. By composing \eqref{eq:K24j} with \eqref{eq:K2}, we can also write the local coordinates in terms of the original variables $(p_a,p_b,\sigma)$ as follows:
\begin{equation}\label{eq:K24j_2}
\begin{aligned}
K_{i1} : (p_a,p_b,\sigma) &= (\me^{- \eta_i \rho_{i1}}, \me^{s_i \eta_i}, \eta_i \rho_{i1} \sigma_{i1}) , \\
K_{i2} : (p_a,p_b,\sigma) &=(\me^{ \eta_i \rho_{i2}u_{i2}}, \me^{s_i \eta_i}, \eta_i\rho_{i2}),\\
K_{i3} : (p_a,p_b,\sigma) &=(\me^{ \eta_i \rho_{i3}}, \me^{s_i \eta_i}, \eta_i \rho_{i3}\sigma_{i3}).
\end{aligned}
\end{equation}

% 	\]}

\begin{remark}
\label{rem:notation}
The subscript notation here and throughout this section has been chosen so that the subscripts/indices $1$ and $3$ are associated with `$\bar u = \pm 1$', whereas the subscripts/indices `$4$' and `$5$' are associated with `$\bar v = \pm 1$'. For clarity, this is sketched in Figure \ref{fig:charts}.
\end{remark}

\begin{figure}[t!]
\centering
\includegraphics[scale=0.4]{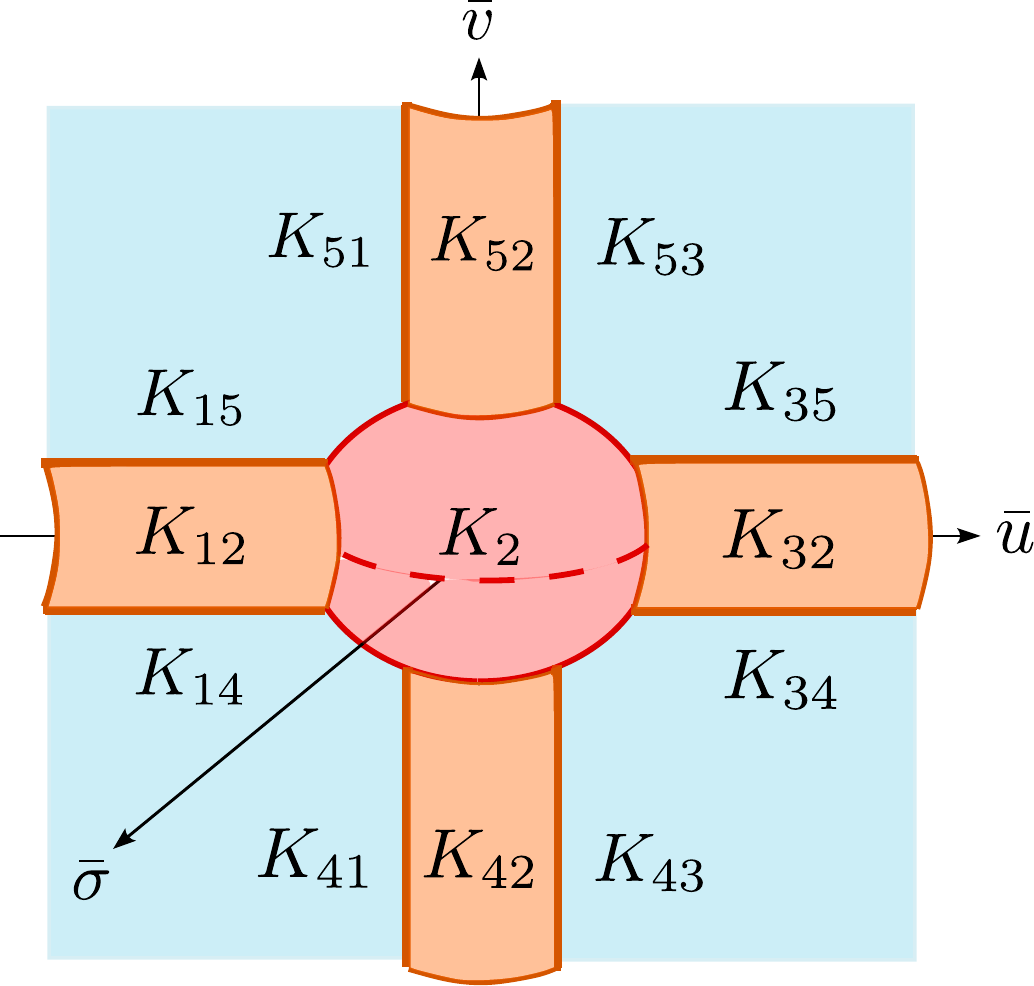}
\caption{Schematic representation of the regions in $(\bar u, \bar v, \bar \sigma)$-space that are covered by the rescaling chart $K_2$ and the $12$ coordinate charts $K_{ij}$ defined in the text; see also Remark \ref{rem:notation}. These $13$ coordinate charts are used in order to cover the (complete) blown-up space.}
\label{fig:charts}
\end{figure}

In combination, $\overline{\overline \Phi}_b^\pm$ and $\overline{\overline \Phi}_a^\pm$ blow up $\overline{\mathbb P}$ to our final blown up space $\overline{\overline{\mathbb P}}$, which has been illustrated in \SJJ{Figures \ref{fig:blow-up0} (c) and \ref{fig:charts}}. The $13$ charts given by\SJ{
\begin{align}\label{eq:allKs}
K_2, \quad  %\bigcup_{\substack{i\in \{1,3\} \\ j\in \{2,4,5\} }}K_{ij}
\left\{ K_{ij} : i \in \{ 1, 3 \} , j \in \{2, 4, 5\} \right\}  \quad \mbox{and} \quad %\bigcup_{\substack{i\in \{4,5\} \\ j\in \{1,2,3\} }}K_{ij}
\left\{ K_{ij} : i \in \{ 4, 5 \} , j \in \{1, 2, 3\} \right\} ,
\end{align}
}see \eqref{eq:K2}, \eqref{eq:K13j}, \eqref{eq:K24j} and Figure \ref{fig:charts}, provide an atlas of $\overline{\overline{\mathbb P}}$. Diffeomorphic change of coordinate maps between overlapping coordinate charts can be derived using \eqref{eq:K2}, \eqref{eq:K13j_2} and \eqref{eq:K24j_2}. For simplicity, we only show the changes of coordinates $\kappa_{2,ij}\,:\,K_2\rightarrow K_{ij}$, from $K_2$ to $K_{ij}$, $i\in \{1,3\}$, $j\in \{2,4,5\}$:
\begin{align*}
\kappa_{2,i2}\,&:\,(\eta_2,u_2,v_2)\mapsto \begin{cases}
\eta_i \ =s_i \eta_2 u_2,\\
\rho_{i2} =s_i u_2^{-1},\\
v_{i2} =v_2,
\end{cases}\quad v_2\in \mathbb R,\quad s_i u_2>0,\\
\kappa_{2,i4}\,&:\,(\eta_2,u_2,v_2)\mapsto\begin{cases}
\eta_i \ =s_i \eta_2 u_2,\\
\rho_{i4}=-s_i v_2 u_2^{-1},\\
\sigma_{i4} =-v_2^{-1},
\end{cases}\quad v_2<0,\quad s_i u_2>0,\\
\kappa_{2,i5}\,&:\,(\eta_2,u_2,v_2)\mapsto \begin{cases}
\eta_i \ =s_i \eta_2 u_2,\\
\rho_{i5} =s_i v_2 u_2^{-1},\\
\sigma_{i5} =v_2^{-1}
\end{cases}\quad v_2>0,\quad s_i u_2>0.
\end{align*}
Here we have also emphasized where these mappings are well-defined. 
% Here we also emphasized that $\kappa_{2,i2}$ is defined for all $v_2\in \mathbb R$ whereas $\kappa_{2,34}$ and $\kappa_{2,35}$ are defined for all $v_2<0$ and $v_2>0$, respectively. 

\subsection{\SJJ{Analysis in the charts $K_{ij}$ used to cover the blow-up of $\overline{\Sigma}_b$}}\label{sec:K13j}

\SJJ{We now consider the equations in charts $K_{ij}$ with $(i,j)\in \{1,3\}\times \{2,4,5\}$, which cover the blow-up of $\overline{\Sigma}_b$.} The equations in the charts $K_{i2}$ are given by
\begin{equation}
\label{eq:bar_sigma_eqns_cylinder}
\begin{split}
r_a' &= {\phi}(v_{i2}) - r_a , \\
r_b' &= 1-{\phi}(s_i \rho_{i2}^{-1}) - \gamma r_b , \\
\eta_i' &= \mu s_i \eta_i \rho_{i2} G_a (r_a, s_i \eta_i) , \\
v_{i2}' &= \mu  G_b(r_b, \rho_{i2} \eta_i v_{i2}) , \\
\rho_{i2}' &= - \mu s_i \rho_{i2}^2 G_a(r_a, s_i \eta_i) ,
\end{split}
\end{equation}
with $\eta_i\ge 0$, $\rho_{i2}\ge 0$.
%$s_{\bar u}$ is $+1$ ($-1$) if the blown-down system is the system corresponding to chart $\bar u = 1$ ($\bar u = -1$) in \eqref{eq:bar_u_eqns_sphere}.
Both of the systems defined by \eqref{eq:bar_sigma_eqns_cylinder} (corresponding to $i\in \{1,3\}$)  are smooth and slow-fast with respect to $\mu\to 0$. Notice in particular that ${\phi}(s_i \rho_{i2}^{-1})$, $\rho_{i2}>0$, extends smoothly to $\rho_{i2}=0$ since
\begin{align*}
\lim_{\rho_{i2}\to 0} {\phi}(s_i \rho_{i2}^{-1}) = \begin{cases}
0 & \text{if} \ i=1,\\
1 & \text{if} \ i=3.	                                                 \end{cases}
\end{align*}
Here we have also used \eqref{eq:Hpm}.
Therefore the local versions $\mathcal M_{b,i2}$ of $\overline{\overline{\mathcal M}}_{b}$, defined by
\begin{align*}
r_a = {\phi}(v_{i2}),\qquad r_b = \gamma^{-1} (1-{\phi}(s_i \rho_{i2}^{-1})),
\end{align*}
are  $3$-dimensional attracting and normally critical manifolds of \eqref{eq:bar_sigma_eqns_cylinder} for $\mu=0$ with non-trivial eigenvalues $-1$ and $-\gamma < 0$. 
% We can represent them as
% 	\[
% 	\begin{split}
% 		\mathcal M_{i2} = \left\{ (\nu_{i2}(v_{i2}, \rho_{i2}), \eta_i, v_{i2}, \rho_{i2}) : \eta_i \geq 0, v_{i2} \in \R, \rho_{i2} \geq 0 \right\} ,
% 	\end{split}
% 	\]
% 	where
% 	\[
% 	\nu_{i2}(v_{i2}, \rho_{i2}) = 
% 	\begin{pmatrix}
% 		h^+(v_{i2}) \\
% 		\gamma^{-1} h^-(s_i \rho_{i2}^{-1})
% 	\end{pmatrix} .
% 	\]

\

The equations in the charts $K_{i4}$ and $K_{i5}$ are given by
\begin{equation}
\label{eq:bar_v_eqns_cylinder}
\begin{split}
r_a' &= {\phi}(m_j \sigma_{ij}^{-1}) - r_a , \\
r_b' &= 1-{\phi}(s_i (\rho_{ij} \sigma_{ij})^{-1}) - \gamma r_b , \\
\eta_i' &= \mu s_i \eta_i \rho_{ij} \sigma_{ij} G_a (r_a, s_i \eta_i) , \\
\rho_{ij}' &= \mu m_j \rho_{ij} \sigma_{ij} \left[ G_b(r_b, m_j \rho_{ij} \eta_i) - s_i m_j \rho_{ij} G_a(r_a, s_i \eta_i ) \right] , \\
\sigma_{ij}' &= - \mu m_j \sigma_{ij}^2  G_b(r_b, m_j \rho_{ij} \eta_i) ,
\end{split}
\end{equation}
with $\rho_{ij}\ge 0$, $\sigma_{ij}\ge 0$, $(i,j) \in \{1,3\}\times \{4,5\}$, and
\[
m_j = 
\begin{cases}
-1 & \text{if} \ j=4 , \\
1 & \text{if} \ j=5 .
\end{cases}
\]
The equations in \eqref{eq:bar_v_eqns_cylinder} define four different systems (since there are four distinct couples $\{i,j\}$), each of which are smooth slow-fast systems with respect to $\mu\to 0$.  Notice, similarly to above in $K_{i2}$, that ${\phi}(m_j\sigma_{ij}^{-1})$, $\sigma_{ij}>0$ and ${\phi}(s_i (\rho_{ij} \sigma_{ij})^{-1})$ extend smoothly (by \eqref{eq:Hpm}) to $\rho_{ij}=0$ and $\sigma_{ij}=0$. Therefore the local versions $\mathcal M_{b,ij}$ of $\overline{\overline{\mathcal M}}_{b}^\pm$, defined by 
\begin{align*}
r_a = {\phi}(m_j \sigma_{ij}^{-1}),\qquad 
r_b &= \gamma^{-1} (1-{\phi}(s_i (\rho_{ij} \sigma_{ij})^{-1})),
\end{align*}
are $3$-dimensional attracting and normally critical manifolds of \eqref{eq:bar_sigma_eqns_cylinder} for $\mu=0$ with non-trivial eigenvalues $-1$ and $-\gamma < 0$.

\subsection{\SJJ{Analysis in the charts $K_{ij}$ used to cover the blow-up of $\overline{\Sigma}_a$}}\label{sec:K45j}

\SJJ{Finally, we consider the equations in charts $K_{ij}$ with $(i,j)\in \{4,5\}\times \{1,2,3\}$, which cover the blow-up of $\overline{\Sigma}_a$.} The equations in the charts $K_{i2}$ are given by
\begin{equation}
\label{eq:bar_sigma_eqns_cylinder_2}
\begin{split}
r_a' &= {\phi}(s_i \rho_{i2}^{-1}) - r_a , \\
r_b' &= 1-{\phi}(u_{i2}) - \gamma r_b , \\
u_{i2}' &= \mu G_a(r_a, \eta_i \rho_{i2} u_{i2}) , \\
\eta_i' &= \mu s_i \eta_i \rho_{i2} G_b (r_b, s_i \eta_i) , \\
\rho_{i2}' &= - \mu s_i \rho_{i2}^2  G_b(r_b, s_i \eta_i) ,
\end{split}
\end{equation}
with $\eta_i\ge 0$, $\rho_{i2}\ge 0$.
%	where we reiterate that $i \in \{4,5\}$. %$s_{\bar u}$ is $+1$ ($-1$) if the blown-down system is the system corresponding to chart $\bar u = 1$ ($\bar u = -1$) in \eqref{eq:bar_u_eqns_sphere}.
Both of the systems defined by \eqref{eq:bar_sigma_eqns_cylinder_2} are smooth and slow-fast with respect to $\mu\to 0$.  Notice, as above\SJ{, that} ${\phi}(s_i \rho_{i2}^{-1})$, $\rho_{i2}>0$, extends smoothly to $\rho_{i2}=0$ since
\begin{align*}
\lim_{\rho_{i2}\to 0} {\phi}(s_i \rho_{i2}^{-1}) = \begin{cases}
0 & \text{if} \ i=4,\\
1 & \text{if} \ i=5.	                                                 \end{cases}
\end{align*}
Here we have again used \eqref{eq:Hpm}.
Therefore the local versions $\mathcal M_{a,i2}$ of $\overline{\overline{\mathcal M}}_{a}$, defined by
\begin{align*}
r_a = {\phi}(s_i \rho_{i2}^{-1}),\qquad r_b = \gamma^{-1} (1-{\phi}(u_{i2})),
\end{align*}
are $3$-dimensional attracting and normally critical manifolds of \eqref{eq:bar_sigma_eqns_cylinder_2} for $\mu=0$ with non-trivial eigenvalues $-1$ and $-\gamma < 0$. 

The equations in the phase-directional charts $K_{i1}$ and $K_{i3}$ are given by
\begin{equation}
\label{eq:bar_u_eqns_cylinder}
\begin{split}
r_a' &= {\phi}(s_i ( \rho_{ij} \sigma_{ij})^{-1}) - r_a , \\
r_b' &= 1-{\phi}(m_j \sigma_{ij}^{-1}) - \gamma r_b , \\
\rho_{ij}' &= \mu m_j \rho_{ij} \sigma_{ij} \left[ G_a(r_a, m_j \rho_{ij} \eta_i) - s_i m_j \rho_{ij} G_b(r_b, s_i \eta_i ) \right] , \\
\eta_i' &= \mu s_i \eta_i \rho_{ij} \sigma_{ij} G_b (r_b, s_i \eta_i) , \\
\sigma_{ij}' &= - \mu m_j \sigma_{ij}^2 G_a(r_a, m_j \rho_{ij} \eta_i) .
\end{split}
\end{equation}
with $\rho_{ij}\ge 0$, $\sigma_{ij}\ge 0$,
$(i,j) \in \{4,5\}  \times \{1,3\}$, and
\[
m_j = 
\begin{cases}
-1 & \text{if} \ j=1 , \\
1 & \text{if} \ j=3 .
\end{cases}
\]
The equations in \eqref{eq:bar_u_eqns_cylinder} define four different systems (there are four distinct couples $\{i,j\}$), each of which are smooth and slow-fast with respect to $\mu\to 0$. Notice again that ${\phi}(s_i ( \rho_{ij} \sigma_{ij})^{-1})$ and ${\phi}(m_j \sigma_{ij}^{-1})$ both extend smoothly (by \eqref{eq:Hpm}) to $\rho_{ij}=0$ and $\sigma_{ij}=0$. Therefore the local versions $\mathcal M_{a,ij}$ of $\overline{\mathcal M}_{a}^\pm$ defined by
\begin{align*}
r_a = {\phi}(s_i ( \rho_{ij} \sigma_{ij})^{-1}),\qquad r_b = \gamma^{-1} (1-{\phi}(m_j \sigma_{ij}^{-1})), 
\end{align*}
are $3$-dimensional attracting and normally hyperbolic critical manifolds of \eqref{eq:bar_u_eqns_cylinder} for $\mu=0$ with non-trivial eigenvalues $-1$ and $-\gamma<0$. 

\subsection{\rsp{Completing the proof of Assertion \ref{existence}}}\label{app:existence}

By working in the $13$ different charts \eqref{eq:allKs}, which cover $\overline{\overline{\mathbb P}}$, we have now shown that \eqref{eq:M0} defines an attracting and normally hyperbolic  critical manifold $\overline{\overline{\mathcal M}}\subset \overline{\overline{\mathbb P}}$; in particular, the nontrivial eigenvalues were $-1$ and $-\gamma<0$ in all charts. In this way, by fixing $k\in \mathbb N$ and a compact submanifold $\overline{\overline{\mathcal M}}_0\subset \overline{\overline{\mathcal M}}$, we obtain a $C^k$-smooth locally invariant \SJJ{and attracting} slow manifold $\overline{\overline{\mathcal M}}_\mu$ for all $0<\mu<\mu_0$ by \cite[Thm.~9.1]{Fenichel1979} \SJJ{(for some sufficiently small $\mu_0 > 0$). This} slow manifold is $C^k$ $\mathcal O(\mu)$-close to the unperturbed manifold $\overline{\overline{\mathcal M}}_0$ on $\overline{\overline{\mathbb P}}$ (in particular, in the $13$ different charts). Next, we notice that the blow-up maps $\overline \Phi$, $\overline{\overline{\Phi}}_b^\pm$ and $\overline{\overline{\Phi}}_a^\pm$ are diffeomorphism\SJ{s} for $\eta>0$ and $\rho>0$; in particular \eqref{eq:K2}, \eqref{eq:K13j_2} and \eqref{eq:K24j_2} define (local) diffeomorphisms for $\eta_i>0$ and $\rho_{ij}>0$. Consequently, upon applications of these maps, we obtain a `blown-down' version $\mathcal M_\mu$ of $\overline{\overline{\mathcal M}}_\mu \cap\{\eta>0,\rho>0\}$, which is locally invariant for \eqref{eq:main_extended} in the $(r_a,r_b,p_a,p_b,\sigma)$\SJ{-space} with $0<\sigma<\sigma_0$. Since $\sigma'=0$, it follows that $$\mathcal M_\mu=\bigcup_{\sigma\in (0,\sigma_0)} \mathcal M_\mu(\sigma) \times \{\sigma\},$$ \SJJ{where} $\mathcal M_\mu(\sigma)$ \SJ{is} locally invariant for \SJJ{system} \eqref{eq:main0} for all $0<\sigma<\sigma_0$. This completes the proof of assertion \ref{existence} \SJJ{in} Theorem \ref{thm:thm0}.

\section{Proof of Proposition \ref{prop:PWS_dynamics}}\label{app:prop}

\SJ{Assertions} (i)-(vii) follow from direct calculations using the equations in \eqref{eq:pwl_limit_reduced} and the definition of the (reduced) switching manifold $\widetilde \Sigma$ in \eqref{eq:switching_manifold_reduced}; the details are omitted for brevity. %{Nevertheless, we emphasize that t}he boundary-node bifurcations described in assertions (iv)-(vii) are degenerate in the sense that $q_I$ and $q_{IV}$ have an associated eigenvector which intersects $\widetilde \Sigma$ non-transversally in the corresponding collision limit (in fact, the eigenvectors are parallel to the $p_a$ and $p_b$ axes for all $(\xi_a,\xi_b)$ values, due to the fact that equations for $p_a$ and $p_b$ in \eqref{eq:switching_manifold_reduced} decouple).

We therefore focus on (viii).  Consider first an initial condition $(p_a(0),p_b(0))=(1,p_b(0))$ with $p_b(0)>1$. 
It is then elementary (using the piecewise-linear structure of \eqref{eq:pwl_limit_reduced}) to show that there are \SJ{minimal `hitting times'} $0<T_1<T_2<T_3<T_4$ such that $p_b(T_1)=p_a(T_2)=p_b(T_3)=p_a(T_4)=1$ and $p_a(T_1)>1$, $p_b(T_2)<1$, $p_a(T_3)<1$, $p_b(T_4)>1$, see also \eqref{eq:poincareP} below. This shows that there is a well-defined Poincar\'e map on $\{p_a=1,\,p_b>1\}$ given by $P\,:\,p_b(0)\mapsto p_b(T_4)$ with $p_b(0),p_b(T_4)>1$. There can be no fixed-points of this diffeomorphism since the piecewise-linear vector field \SJJ{associated with system} \eqref{eq:pwl_limit_reduced} has constant divergence equal to $-(\delta+1)<0$. This follows from Dulac's theorem which is also applicable in the present context because there is no sliding along the discontinuity sets, only crossing\SJ{; see e.g.~\cite{DaCruz2020}}.

To complete the proof, we show that
$P$ extends $C^\infty$-smoothly to $p_b=1$ with 
\begin{align}
P(1)=1,\quad P'(1)=1 \quad \mbox{and}\quad P''(1)=-\frac{(\delta + 1) \xi_a \xi_b}{2\delta (\xi_b-\gamma)} <0.\label{eq:Pexpr}\end{align}
Indeed, since $P$ has no fixed-points, it follows that $1<P(p_b)<p_b$ for all $p_b>1$ and therefore  $P^n(p_b)\rightarrow 1^+$ for $n\rightarrow \infty$.
% This also leads to the conclusion that $0<P'(p_b)<1$ for all $p_b>1$.
To prove \eqref{eq:Pexpr}, we simply solve the linear equations \SJ{to} obtain
\begin{align}
\begin{cases}
p_a(T_1)=\xi_a+(1-\xi_a) p_b(0)^{-\delta^{-1}},\\
p_b(T_2)=p_a(T_1)^{-\delta},\\
p_a(T_3)=\left(\frac{\xi_b-\gamma}{\xi_b-\gamma p_b(T_2)}\right)^{\delta^{-1}},\\%\frac{\xi_b}{\gamma}-\left(\frac{\xi_b}{\gamma}-1\right)\left(\frac{\xi_a-1}{\xi_a-p_a(T_2)}\right)^{\delta},\\
p_b(T_4) =  \xi_b \gamma^{-1} +(1-\xi_b \gamma^{-1})\left( \frac{\xi_a-1}{\xi_a-p_a(T_3)}\right)^\delta.
\end{cases} \label{eq:poincareP}
\end{align}
% Define $P(p_a(0)):=p_a(T_4)$. Clearly $\lim_{p_a\rightarrow 1^+} P(p_a) = 1$; in fact 
From these expressions, one may directly realize that $P$ has a smooth extension to $p_b(0)=1$ with $P(1)=1$ and $P'(1)=1$. A simple calculation also shows the expression for $P''(1)$.

\end{appendix}
	
	% \sectio}
% tbd 
% 
\end{document}